\definecolor{red}{rgb}{1.0,0.0,0.0}
\definecolor{blu}{rgb}{0.0,0.0,1.0}
\definecolor{gre}{rgb}{0.03,0.50,0.03}
\definecolor{amethyst}{rgb}{0.6, 0.4, 0.8}
\definecolor{blue-violet}{rgb}{0.54, 0.17, 0.89}
\definecolor{darkviolet}{rgb}{0.58, 0.0, 0.83}
\numberwithin{equation}{section}
\theoremstyle{plain}
\newtheorem{Theorem}{Theorem}[section]
\newtheorem{Definition}[Theorem]{Definition}
\newtheorem{Proposition}[Theorem]{Proposition}
\newtheorem{Lemma}[Theorem]{Lemma}
\newtheorem{Corollary}[Theorem]{Corollary}
\newtheorem{Remark}[Theorem]{Remark}
\newenvironment{Assumption}[1]
{\innerAssumption}
{\endinnerAssumption}
\def \trans{^{\scriptscriptstyle{\intercal }}}
\def \E{\mathbb{E}}
\def \F{\mathbb{F}}
\def \G{\mathbb{G}}
\def \N{\mathbb{N}}
\def \P{\mathbb{P}}
\def \R{\mathbb{R}}
\def \Ac{{\cal A}}
\def \Bc{{\cal B}}
\def \Fc{{\cal F}}
\def \Gc{{\cal G}}
\def \Nc{{\cal N}}
\def \eps{\varepsilon}
\begin{document}

\title{Path-dependent Hamilton-Jacobi-Bellman equation:\\ Uniqueness of Crandall-Lions viscosity solutions}

\author{
Andrea COSSO\footnote{Universit\`a degli Studi di Milano, Dipartimento di Matematica, Via Saldini 50, 20133, Milano, Italy; andrea.cosso@unimi.it.} \quad
Fausto GOZZI\footnote{Luiss University, Department of Economics and Finance, Rome, Italy; fgozzi@luiss.it.} \quad
Mauro ROSESTOLATO\footnote{Universit\`a di Genova, Dipartimento di Economia, Via F. Vivaldi 5, 16126, Genova, Italy; mauro.rosestolato@unige.it.}
\quad Francesco RUSSO\footnote{ENSTA Paris, Institut Polytechnique de Paris, Unit\'e de Math\'ematiques Appliqu\'ees, 828, bd.\ des Mar\'echaux, F-91120 Palaiseau, France; francesco.russo@ensta-paris.fr.\newline\newline
\emph{Acknowledgments.} \!The research of Francesco Russo was partially supported by the  ANR-22-CE40-0015-01 project (SDAIM).}
}

\maketitle

\begin{abstract}
\noindent We formulate a path-dependent stochastic optimal control problem under general conditions, for which we prove rigorously the dynamic programming principle and that the value function is the unique Crandall-Lions viscosity solution of the corresponding Hamilton-Jacobi-Bellman equation. Compared to the literature, the proof of our core result, that is the comparison theorem, is based on the fact that the value function is bigger than any viscosity subsolution and smaller than any viscosity supersolution. It also relies on the approximation of the value function in terms of functions defined on finite-dimensional spaces as well as on regularity results for parabolic partial differential equations.%We prove existence and uniqueness of Crandall-Lions viscosity solutions of Hamilton-Jacobi-Bellman (HJB) equations in the space of continuous paths, associated to the optimal control of path-dependent stochastic differential equations. Compared to the literature, the proof of our core result, that is the comparison theorem, is based on the fact that the value function is bigger than any viscosity subsolution and smaller than any viscosity supersolution. It also relies on the approximation of the value function in terms of functions defined on finite-dimensional spaces as well as on regularity results for parabolic partial differential equations. In the paper, we also formulate a path-dependent stochastic optimal control problem under general conditions, for which we prove rigorously the dynamic programming principle and that the value function is the unique viscosity solution of the corresponding path-dependent HJB equation.
\end{abstract}

\bigskip
\noindent\textbf{Mathematics Subject Classification (2020):} 35D40, 35B51, 93E20.

\bigskip
\noindent\textbf{Keywords:} path-dependent Hamilton-Jacobi-Bellman equations, viscosity solutions, comparison theorem, functional It\^o calculus.

\newpage

\tableofcontents

\section{Introduction}

The optimal control of path-dependent stochastic differential equations (SDEs) arises frequently in applications (for instance in Economics and Finance) where the dynamics are non-Markovian.
Such non-Markovianity makes difficult to apply the dynamic programming approach to those problems. Indeed, the standard dynamic programming approach is designed when the state equation is Markovian hence it cannot be applied to such problems as it is.
\\
More precisely, consider the following SDE on a complete probability space $(\Omega,\Fc,\P)$ where a $m$-dimensional Brownian motion $B=(B_t)_{t\geq0}$ is defined. Let $T>0$, $t\in[0,T]$, $x\in C([0,T];\R^d)$, and consider a progressively measurable process $\alpha\colon[0,T]\times \Omega \rightarrow A$ (with $A$ being a Polish space), where $x$ is the initial path and $\alpha$ the control process. Let the state process $X:[0,T]\times \Omega \rightarrow \R^d$ satisfy the following controlled path-dependent SDE:
\begin{equation}\label{eq:stateintro}
\begin{cases}
dX_s \ = \ b(s,X,\alpha_s)\,ds + \sigma(s,X,\alpha_s)\,dB_s, \qquad &\quad s\in(t,T], \\
X_s \ = \ x(s), &\quad s\in[0,t].
\end{cases}
\end{equation}
Here $X$ denotes the whole path, which, under mild assumptions, belongs to $C([0,T];\R^d)$.
We assume $b\colon[0,T]\times C([0,T];\R^d)\times A\rightarrow\R^d$ (as well as $\sigma$) to be non-anticipative, namely, for all $s\in [0,T]$, $a\in A$, $b(s,x,a)$ and $\sigma(s,x,a)$ depend on the path $x\in C([0,T];\R^d)$ only up to time $s$.
\\
The stochastic optimal control problem consists in maximizing the reward functional
\[
J(t,x,\alpha) \ = \ \E\bigg[\int_t^T f\big(s,X^{t,x,\alpha},\alpha_s\big)\,ds + g\big(X^{t,x,\alpha}\big)\bigg],
\]
with $f$ being non-anticipative, as $b$ and $\sigma$ above.
In the above formula, $X^{t,x,\alpha}$ denotes the solution to \eqref{eq:stateintro}. The value function is then defined as
\[
v(t,x) \ = \ \sup_{\alpha} J(t,x,\alpha), \qquad \forall\,(t,x)\in[0,T]\times C([0,T];\R^d),
\]
where the supremum is taken over all progressively measurable control processes $\alpha$.\\
We see that  the value function is defined on the infinite-dimensional space of continuous paths $C([0,T];\R^d)$, hence it is related to some Hamilton-Jacobi-Bellman (HJB) equation in infinite dimension.

The ``standard'' approach to study such problems consists in
changing state space transforming the path-dependent SDE into a Markovian SDE, formulated on an infinite-dimensional space $\mathcal{H}$, typically $C([0,T];\R^d)$ or $\R^d\times L^2([0,T];\R^d)$.
In this case the associated
Hamilton-Jacobi-Bellman equation is a PDE in infinite dimension (see for instance \cite{DPZ3,fabbrigozziswiech}) which contains
``standard'' Fr\'echet derivatives in the space $\mathcal{H}$.
Some results on the viscosity solution approach are given for instance in \cite{Federico08,Federico11,RosestolatoSwiech17};
however, uniqueness results seems not available up to now,
see the discussion in \cite[Section 3.14, pages 363-364]{fabbrigozziswiech}).

More recently, another approach has been developed after the seminal work of Dupire \cite{dupire}. This is based on the introduction of a different notion of ``finite-dimensional''
derivatives (known as horizontal/vertical derivatives), which allows to write
the associated HJB equation without
using the derivatives in the space $\mathcal{H}$.
We call such an equation a
path-dependent Hamilton-Jacobi-Bellman equation (see equation \eqref{HJB} below), which belongs to the more general class of path-dependent partial differential equations, that is PDEs where the
unknown depends on the paths and the involved derivatives are the Dupire horizontal and vertical derivatives. The definitions of these derivatives are recalled in Appendix \ref{App:PathwiseDeriv}. There are also other approaches, similar to that introduced by Dupire, but based on slightly different notions of derivatives, see in particular \cite{AH02,Lu07,GLP21}.
\\
The theory of path-dependent PDEs is very recent, yet there are already many papers on this subject, see for instance \cite{BouchardLoeperTan21,DGR,DGRClassical,tangzhang13,PS15,CF16,PengWang,EKTZ,etzI,etzII,flandoli_zanco13,R16,cosso_russoStrict,cosso_russoStrong-Visc,rtz1,rtz3,RenRosest,paperPathDep,CFGRT,BK18,CR19}.
\\
One stream in the literature looks at such equations using modified definitions of viscosity solution. In particular, we mention the notion of viscosity solution introduced in \cite{EKTZ} and by now well developed (see \cite{etzI,etzII,R16,rtz1,rtz3,CFGRT}), where maxima and minima are taken in expectation. We also mention the recent paper \cite{BouchardLoeperTan21}, where a notion of ``approximate'' viscosity solution is introduced, for which existence, comparison, and stability results are established under fairly general conditions.
\\
Another stream in the literature, to which this article belongs, looks at path-dependent PDEs
using the ``standard'' definition of viscosity solution adapted to the new derivatives. We call such a definition the ``Crandall-Lions'' one, recalling for instance their papers \cite{CL81,CL83}. In such a context there are only two papers, namely \cite{CR19} and \cite{Zhou}. Paper \cite{CR19} only addresses the path-dependent heat equation, however it is the first work where the main tools used in \cite{Zhou} and in the present paper were introduced, namely the use of a smooth gauge-type function and a smooth variational principle on the space of continuous paths  in order to generate maxima/minima on $[0,T]\times C([0,T];\R^d)$, therefore relying on the completeness of the underlying space in place of the missing local compactness. Concerning \cite{Zhou}, a comparison theorem for path-dependent HJB equations is provided using the approach of doubling variables. The proof of such a result turns out to be technically more involved compared to our approach, even though we impose stronger assumptions on the diffusion coefficient.

In the present paper we prove existence and uniqueness of Crandall-Lions viscosity solutions of HJB equations associated to the optimal control of path-dependent SDEs. The proof of uniqueness (or, more precisely, of the comparison theorem, from which uniqueness is derived) is built on refinements of the original approach developed in \cite{L83b} and is based on the existence of the candidate solution $v$ (the value function), which is shown to be bigger than any subsolution and smaller than any supersolution.
The latter is traditionally based on regularity results which are missing in the present context. We overcome this non-trivial technical difficulty
relying on suitable approximation procedures as well as on regularity results for parabolic partial differential equations, see Lemmas \ref{L:CylindrApprox1}-\ref{L:CylindrApprox2}-\ref{L:CylindrApprox3} and Theorem \ref{T:CylindrApprox1} of Appendix \ref{App:CylindricalApprox}. Moreover, in order to generate maxima or minima for functions on $[0,T]\times C([0,T];\R^d)$, we use the idea introduced in \cite{CR19}, and also adopted in \cite{Zhou}, of relying on a smooth variational principle of Borwein-Preiss type and on a smooth gauge-type function. Here, we exploit the smooth gauge-type function introduced in \cite{Zhou} (see Lemma \ref{L:SmoothGauge}), which turns out to be simpler than that built in \cite{CR19}.

Once the comparison theorem is proved, we deduce from our existence result (Theorem \ref{T:Existence}) that the value function $v$ is the unique Crandall-Lions viscosity solution of the path-dependent HJB equation. The existence result is based, as usual, on the dynamic programming principle, which is proved rigorously in the present paper, see Theorem \ref{T:DPP}.

The rest of the paper is organized as follows. In Section \ref{S:Formulation} we formulate the stochastic optimal control problem of path-dependent SDEs and prove the dynamic programming principle. In Section \ref{S:Visc} we introduce the notion of Crandall-Lions viscosity solution and prove that the value function $v$ solves in the viscosity sense the path-dependent Hamilton-Jacobi-Bellman equation. In Section \ref{S:Uniq} we state the smooth variational principle on $[0,T]\times C([0,T];\R^d)$ and prove the comparison theorem, from which the uniqueness result follows. In Appendix \ref{App:PathwiseDeriv} we recall the definitions of horizontal and vertical derivatives together with the functional It\^o formula. Finally, in Appendix \ref{App:CylindricalApprox} we report all the results concerning the approximation of the value function needed in the proof of the comparison theorem.

\section{Path dependent stochastic optimal control problems}
\label{S:Formulation}

\subsection{Notations and basic setting}

Let $(\Omega,\Fc,\P)$ be a complete probability space on which a $m$-dimensional Brownian motion $B=(B_t)_{t\geq0}$ is defined. Let $\F=(\Fc_t)_{t\geq0}$ denote the $\P$-completion of the filtration generated by $B$. Notice that $\F$ is right-continuous, so that it satisfies the usual conditions. Furthermore, let $T>0$ and let $A$ be a Polish space, with $\Bc(A)$ being its Borel $\sigma$-algebra. We denote by $\Ac$ the family of all $\F$-progressively measurable processes $\alpha\colon[0,T]\times\Omega\rightarrow A$. Finally, for every $p\geq1$, we denote by $\mathbf S_p(\F)$ the set of $d$-dimensional continuous $\F$-progressively measurable processes $X\colon[0,T]\times\Omega\rightarrow\R^d$ such that
\begin{equation}\label{S_p}
\|X\|_{\mathbf S_p} \ := \ \E\Big[\sup_{0\leq t\leq T}|X_t|^p\Big]^{1/p} \ < \ \infty.
\end{equation}
The \emph{state space} of the stochastic optimal control problem is the set $C([0,T];\R^d)$ of continuous $d$-dimensional paths on $[0,T]$. For every $x\in C([0,T];\R^d)$ and $t\in[0,T]$, we denote by $x(t)$ or $x_t$ the value of $x$ at time $t$ and we set $x(\cdot\wedge t):=(x(s\wedge t))_{s\in[0,T]}$ or $x_{\cdot\wedge t}:=(x(s\wedge t))_{s\in[0,T]}$. Observe that $x(t)$ (or $x_t$) is an element of $\R^d$, while $x(\cdot\wedge t)$ (or $x_{\cdot\wedge t}$) belongs to $C([0,T];\R^d)$. We endow $C([0,T];\R^d)$ with the supremum norm $\|\cdot\|_T$ (also denoted by $\|\cdot\|_\infty$) defined as
\[
\|x\|_T \ = \ \sup_{s\in[0,T]} |x(s)|, \qquad x\in C([0,T];\R^d),
\]
where $|x(s)|$ denotes the Euclidean norm of $x(s)$ in $\R^d$. We remark that $(C([0,T];\R^d),\|\cdot\|_T)$ is a Banach space and we denote by $\mathscr B$ its Borel $\sigma$-algebra. We also define, for every $t\in[0,T]$, the seminorm $\|\cdot\|_t$ as
\[
\|x\|_t \ = \ \| x_{\cdot\wedge t}\|_T, \qquad x\in C([0,T];\R^d).
\]
Finally, on $[0,T]\times C([0,T];\R^d)$ we define the pseudometric $d_\infty\colon([0,T]\times C([0,T];\R^d))^2\rightarrow[0,\infty)$ as
\[
d_\infty\big((t,x),(t',x')\big) \ := \ |t - t'| + \big\|x(\cdot\wedge t) - x'(\cdot\wedge t')\big\|_T.
\]
We refer to \cite[Section 2.1]{CR19} for more details on such a pseudometric. On $[t_0,T]\times C([0,T];\R^d)$, with $t_0\in[0,T)$, we consider the restriction of $d_\infty$, which we still denote by the same symbol.

\begin{Definition}\label{D:Modulus}
We say that $w\colon[0,\infty)\rightarrow[0,\infty)$ is a modulus of continuity if $w$ is continuous, increasing, subadditive, and $w(0)=0$.
\end{Definition}

\noindent We refer to \cite[Appendix D]{fabbrigozziswiech} for more details on the notion of modulus of continuity.

\subsection{Assumptions and state equation}

We consider the coefficients
\[
b,\;\sigma,\;f\colon[0,T]\times C([0,T];\R^d)\times A \ \longrightarrow \ \R^d,\;\R^{d\times m},\;\R, \qquad\quad g\colon C([0,T];\R^d) \ \longrightarrow \ \R,
\]
on which we impose the following assumptions.

\begin{Assumption}{\bf(A)}\label{AssA}\quad
\begin{enumerate}[\upshape (i)]
\item The maps $b,\sigma,f,g$ are continuous.
\item There exist a constant $K\geq0$ such that
\begin{align*}
|b(t,x,a)-b(t,x',a)| + |\sigma(t,x,a)-\sigma(t,x',a)| + |f(t,x,a)-f(t,x',a)| &\leq K\|x - x'\|_t, \\
|g(x)-g(x')| &\leq K\|x - x'\|_T, \\
|b(t,x,a)| + |\sigma(t,x,a)| + |f(t,x,a)| + |g(x)| &\leq K,
\end{align*}
for all $a\in A$, $(t,x),(t',x')\in[0,T]\times C([0,T];\R^d)$, with $|\sigma(t,x,a)|:=(\sum_{i,j}|\sigma_{i,j}(t,x,a)|^2)^{1/2}=(\textup{tr}(\sigma\sigma\trans)(t,x,a))^{1/2}$ denoting the Frobenius norm of $\sigma(t,x,a)$.
\end{enumerate}
\end{Assumption}

\begin{Assumption}{\bf(B)}\label{AssB}
The maps $b,\sigma,f$ are uniformly continuous in $t$, uniformly with respect to the other variables. In particular, there exists a modulus of continuity $w$ such that
\[
|b(t,x,a) - b(s,x,a)| + |\sigma(t,x,a) - \sigma(s,x,a)| + |f(t,x,a) - f(s,x,a)| \ \leq \ w(|t - s|),
\]
for all $t,s\in[0,T],x\in C([0,T];\R^d),a\in A$.
\end{Assumption}

\begin{Assumption}{\bf(C)}\label{AssC}\quad
\begin{enumerate}[\upshape (i)]
\item There exist $\bar d\in\N$ and $\bar\sigma\colon[0,T]\times\R^{d\bar d}\times A\rightarrow\R^{d\times m}$ satisfying, for all $(t,x,a)\in[0,T]\times C([0,T];\R^d)\times A$,
\[
\sigma(t,x,a) \ = \ \bar\sigma\bigg(t,\int_{[0,t]}\varphi_1(s)\,d^-x(s),\ldots,\int_{[0,t]}\varphi_{\bar d}(s)\,d^-x(s),a\bigg),
\]
for some continuously differentiable maps $\varphi_1,\ldots,\varphi_{\bar d}\colon[0,T]\rightarrow\R$, where the above deterministic forward integrals are defined as in Definition \ref{D:DetInt} with $T$ replaced by $t$ (see also Remark \ref{R:IntbyParts_sigma}).
\item There exists a constant $K\geq0$ such that
\begin{align*}
|\bar\sigma(t,y,a) - \bar\sigma(t',y',a)| \ &\leq \ K\,|y - y'|, \\
|\bar\sigma(t,y,a)| \ &\leq \ K,
\end{align*}
for all $(t,a)\in[0,T]\times A$, $y,y'\in\R^{d\bar d}$, with $|y-y'|$ denoting the Euclidean norm of $y-y'$ in $\R^{d\bar d}$.
\item For every $a\in A$, the map $\bar\sigma(\cdot,\cdot,a)$ is $C^{1,2}([0,T]\times\R^{d\bar d})$. Moreover, there exist constants $K\geq0$ and $q\geq0$ such that
\[
\big|\partial_t\bar\sigma(t,y,a)\big| + \big|\partial_y\bar\sigma(t,y,a)\big| + \big|\partial_{yy}^2\bar\sigma(t,y,a)\big| \ \leq \ K\,\big(1 + |y|\big)^q,
\]
for all $(t,y,a)\in[0,T]\times\R^{d\bar d}\times A$.
\end{enumerate}
\end{Assumption}

\begin{Remark}
Notice that Assumptions \ref{AssC}\textup{-(i)-(ii)} imply the validity of \ref{AssA}\textup{-(ii)} for the function $\sigma$ (namely, Lipschitzianity in $x$ and boundedness). As a matter of fact, boundedness is obvious, while the Lipschitz property follows from the Lipschitz property of $\bar\sigma$ and the integration by parts formula \eqref{IntbyPartsC1}.
\end{Remark}

\begin{Remark}\label{R:IntbyParts_sigma}
Since the functions $\varphi_1,\ldots,\varphi_{\bar d}$ appearing in Assumption \ref{AssC}\textup{-(i)} are continuously differentiable, we can use the integration by parts formula \eqref{IntbyParts} to rewrite the forward integrals as follows:
\begin{equation}\label{IntbyPartsC1}
\int_{[0,t]}  \varphi_i(s)\,d^- x(s) \ = \ \varphi_i(t) \,x(t) - \varphi_i(0)\,x(0) - \int_0^t x(s)\,\frac{d\varphi_i}{ds}(s)\,ds,
\end{equation}
for every $i=1,\ldots,\bar d$, where we have used that the Lebesgue-Stieltjes integral $\int_{(0,t]}x(s)d\varphi(s)$ is equal to the Lebesgue integral $\int_0^t x(s)\frac{d\varphi_i}{ds}(s)ds$.
\end{Remark}

\begin{Remark}
By the Lipschitz continuity of $b,\sigma,f$, we deduce that they satisfy the following non-anticipativity condition:
\[
b(t,x,a) \ = \ b(t,x_{\cdot\wedge t},a), \qquad \sigma(t,x,a) \ = \ \sigma(t,x_{\cdot\wedge t},a), \qquad f(t,x,a) \ = \ f(t,x_{\cdot\wedge t},a),
\]
for every $(t,x,a)\in[0,T]\times C([0,T];\R^d)\times A$.
\end{Remark}

\noindent For every $t\in[0,T]$, $\xi\in\mathbf S_2(\F)$, $\alpha\in\Ac$, the state process satisfies the following system of controlled stochastic differential equations:
\begin{equation}\label{SDE}
\begin{cases}
dX_s \ = \ b(s,X,\alpha_s)\,ds + \sigma(s,X,\alpha_s)\,dB_s, \qquad &\quad s\in(t,T], \\
X_s \ = \ \xi_s, &\quad s\in[0,t].
\end{cases}
\end{equation}

\begin{Proposition}\label{P:SDE}
Suppose that Assumption \ref{AssA} holds. Then, for every $t\in[0,T]$, $\xi\in\mathbf S_2(\Fc_t)$, $\alpha\in\Ac$, there exists a unique solution $X^{t,\xi,\alpha}\in\mathbf S_2(\F)$ to equation \eqref{SDE}. Moreover, it holds that
\begin{equation}\label{ContinuityViscProof}
\lim_{r\rightarrow t^+}\sup_{\alpha\in\Ac}\E\Big[\sup_{0\leq s\leq T}\big|X_{s\wedge r}^{t,\xi,\alpha} - \xi_{s\wedge t}\big|^2\Big] \ = \ 0.
\end{equation}
\end{Proposition}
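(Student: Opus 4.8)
The plan is to solve \eqref{SDE} by a standard Picard fixed-point scheme, the only point requiring a little care being the \emph{past-dependent} form of the Lipschitz condition in Assumption \ref{AssA}-(ii). Fix $t$, $\xi$, $\alpha$ and let $\mathbf S_2^{t,\xi}(\F)$ be the closed subset of the Banach space $\mathbf S_2(\F)$ consisting of the processes that coincide with $\xi_{\cdot\wedge t}$ on $[0,t]$; endowed with $\|\cdot\|_{\mathbf S_2}$ it is complete. On it I would define
\[
\Phi(X)_s \ := \ \xi_{s\wedge t} + \int_t^{s\vee t} b(r,X,\alpha_r)\,dr + \int_t^{s\vee t}\sigma(r,X,\alpha_r)\,dB_r, \qquad s\in[0,T],
\]
and first check it is well posed: for $X\in\mathbf S_2(\F)$ the integrand $r\mapsto(b,\sigma)(r,X,\alpha_r)$ is $\F$-progressively measurable (by continuity and non-anticipativity of $b,\sigma$ and $\alpha\in\Ac$) and bounded by $K$, so $\Phi(X)$ is a continuous $\F$-progressively measurable process and, by the Burkholder--Davis--Gundy (or Doob) inequality, $\Phi(X)\in\mathbf S_2^{t,\xi}(\F)$. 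By construction, a process in $\mathbf S_2(\F)$ solves \eqref{SDE} if and only if it is a fixed point of $\Phi$.

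The core estimate is the following. Given $X,X'\in\mathbf S_2^{t,\xi}(\F)$, they coincide on $[0,t]$, hence so do $\Phi(X),\Phi(X')$, and for $r\geq t$ one has $\|X - X'\|_r = \sup_{t\leq u\leq r}|X_u - X'_u|\leq\sup_{0\leq u\leq r}|X_u - X'_u|$. Then, applying the Cauchy--Schwarz inequality to the drift term, Doob's maximal inequality together with It\^o's isometry to the diffusion term, and the Lipschitz bound in Assumption \ref{AssA}-(ii), one obtains a constant $C=C(K,T)$ such that
\[
\E\Big[\sup_{0\leq u\leq s}\big|\Phi(X)_u - \Phi(X')_u\big|^2\Big] \ \leq \ C\int_t^s \E\Big[\sup_{0\leq u\leq r}\big|X_u - X'_u\big|^2\Big]\,dr, \qquad s\in[t,T].
\]
In particular (taking $s=T$) $\Phi$ is Lipschitz continuous on $\mathbf S_2^{t,\xi}(\F)$. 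Applying the estimate to the Picard iterates $X^{(0)}:=\xi_{\cdot\wedge t}$, $X^{(n+1)}:=\Phi(X^{(n)})$ and iterating gives $\E[\sup_{u\leq s}|X^{(n+1)}_u - X^{(n)}_u|^2]\leq (C(s-t))^n M/n!$ with $M:=\sup_{s\in[0,T]}\E[\sup_{u\le s}|X^{(1)}_u - X^{(0)}_u|^2]<\infty$ (finite by boundedness of $b,\sigma$); hence $(X^{(n)})_n$ is Cauchy in $\mathbf S_2(\F)$, its limit $X^{t,\xi,\alpha}$ belongs to the closed set $\mathbf S_2^{t,\xi}(\F)$, and by continuity of $\Phi$ it is a fixed point, i.e.\ a solution of \eqref{SDE}. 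Uniqueness follows by applying the core estimate to two solutions $X,X'$ and invoking Gronwall's lemma to the (finite) function $s\mapsto\E[\sup_{u\le s}|X_u - X'_u|^2]$, which vanishes on $[0,t]$.

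It remains to prove \eqref{ContinuityViscProof}. Fix $r\in(t,T]$. For $s\leq t$ we have $s\wedge r=s$ and $X^{t,\xi,\alpha}_{s}=\xi_s=\xi_{s\wedge t}$, so the increment vanishes; for $s>t$ we have $\xi_{s\wedge t}=\xi_t$ and, by \eqref{SDE},
\[
X^{t,\xi,\alpha}_{s\wedge r} - \xi_{s\wedge t} \ = \ \int_t^{s\wedge r} b(v,X^{t,\xi,\alpha},\alpha_v)\,dv + \int_t^{s\wedge r}\sigma(v,X^{t,\xi,\alpha},\alpha_v)\,dB_v .
\]
Taking the supremum over $s\in[0,T]$ and noting that $\{s\wedge r:\,t<s\le T\}=(t,r]$, we get that $\sup_{0\leq s\leq T}|X^{t,\xi,\alpha}_{s\wedge r} - \xi_{s\wedge t}|=\sup_{t\leq u\leq r}\big|\int_t^u b\,dv + \int_t^u\sigma\,dB_v\big|$; then, using $(a+b)^2\leq 2a^2+2b^2$, Cauchy--Schwarz, Doob's $L^2$ maximal inequality, It\^o's isometry, and the bound $|b|+|\sigma|\leq K$ from Assumption \ref{AssA}-(ii),
\[
\E\Big[\sup_{0\leq s\leq T}\big|X^{t,\xi,\alpha}_{s\wedge r} - \xi_{s\wedge t}\big|^2\Big] \ \leq \ 2K^2(r-t)^2 + 8K^2(r-t),
\]
whose right-hand side is independent of $\alpha\in\Ac$ and tends to $0$ as $r\to t^+$; this is exactly \eqref{ContinuityViscProof}. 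As for the main difficulty: there is essentially none beyond bookkeeping, this being a classical well-posedness statement for Lipschitz SDEs; the only step deserving attention is the seminorm $\|\cdot\|_r$ in the Lipschitz condition, handled by observing that all Picard iterates (and any two solutions) share the same past on $[0,t]$, so that $\|\cdot\|_r$ reduces to a genuine supremum over $[t,r]$ and the usual Gronwall-type estimates go through verbatim.
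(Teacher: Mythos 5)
Your argument is correct, and the route is different from the paper's only in that the paper gives no in-text proof at all: it simply cites \cite[Proposition 2.8]{CKGPR20} for well-posedness and \cite[Remark 2.9]{CKGPR20} for \eqref{ContinuityViscProof}. Your Banach/Picard fixed-point scheme on the closed set $\mathbf S_2^{t,\xi}(\F)$ is exactly the mechanism behind the cited result (the paper itself recalls, in \textsc{Step III} of the proof of Lemma \ref{L:A_t}, that the existence proof in \cite{CKGPR20} is ``based on a fixed point argument''), and the two points that genuinely need care are handled properly: the past-dependent Lipschitz bound $K\|x-x'\|_r$ is absorbed because all iterates and any two solutions share the initial path, so $\|X-X'\|_r=\sup_{0\le u\le r}|X_u-X'_u|$ and the Doob/It\^o-isometry/Gronwall estimates go through; and the bound $2K^2(r-t)^2+8K^2(r-t)$, which uses only the uniform bound $K$ on $b,\sigma$, is independent of $\alpha$ (and of $\xi$), which is precisely what the supremum over $\Ac$ in \eqref{ContinuityViscProof} requires. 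Two remarks on what your self-contained proof does and does not deliver compared with the citation. First, your set $\mathbf S_2^{t,\xi}(\F)$ is nonempty exactly when the stopped path $\xi_{\cdot\wedge t}$ is itself $\F$-progressively measurable; this is automatic for $\xi\in\mathbf S_2(\F)$ (in particular for deterministic initial paths $x$), while for initial data that are merely $\Fc_t$-measurable the solution is naturally adapted to the enlarged filtration $\G^t$, a point the paper itself records in \textsc{Step I} of the proof of Lemma \ref{L:DPP}; it is worth making this explicit if you want your proof to cover the statement in the same generality in which it is later used. Second, the citation also supplies a by-product the paper exploits in \textsc{Step III} of Lemma \ref{L:A_t}, namely joint measurability of the solution in the parameter (the random field $X^{t,x,y}$); your Picard construction yields this too (the iterates are measurable in the parameter and converge), but you would need to record it to replace the reference completely. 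Gaining self-containedness is what your approach buys; brevity and these measurability refinements are what the citation buys.
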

\begin{proof}
See \cite[Proposition 2.8]{CKGPR20} for the existence and uniqueness result. Concerning \eqref{ContinuityViscProof} we refer to \cite[Remark 2.9]{CKGPR20}.
\end{proof}

\subsection{Value function}

Given $t\in[0,T]$ and $x\in C([0,T];\R^d)$, the stochastic optimal control problem consists in finding $\alpha\in\Ac$ maximizing the following functional:
\[
J(t,x,\alpha) \ = \ \E\bigg[\int_t^T f\big(s,X^{t,x,\alpha},\alpha_s\big)\,ds + g\big(X^{t,x,\alpha}\big)\bigg].
\]
Finally, the value function is defined as
\begin{equation}\label{Value}
v(t,x) \ = \ \sup_{\alpha\in\Ac} J(t,x,\alpha), \qquad \forall\,(t,x)\in[0,T]\times C([0,T];\R^d).	
\end{equation}

\begin{Proposition}\label{P:Value}
Suppose that Assumption \textup{\ref{AssA}} holds. Then, the value function $v$ is bounded, continuous on $([0,T]\times C([0,T];\R^d),d_\infty)$, and there exists a constant $c\geq0$ $($depending only on $T$ and $K$$)$ such that
\begin{equation}\label{Value_Lipschitz}
|v(t,x) - v(t',x')| \ \leq \ c\big(|t-t'|^{1/2} + \|x(\cdot\wedge t) - x'(\cdot\wedge t')\|_T\big),
\end{equation}
for all $(t,x),(t',x')\in[0,T]\times C([0,T];\R^d)$.
\end{Proposition}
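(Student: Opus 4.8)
The plan is to get boundedness directly from the bounds on $f,g$, then to prove \eqref{Value_Lipschitz} by splitting it into a temporal and a spatial piece, each of which reduces to a standard stability estimate for the state equation \eqref{SDE} that is \emph{uniform} in the control $\alpha\in\Ac$; continuity with respect to $d_\infty$ is then immediate. The argument is direct and does not invoke the dynamic programming principle. Boundedness is trivial: by the boundedness part of Assumption \ref{AssA}\textup{-(ii)} one has $|f|\le K$ and $|g|\le K$, hence $|J(t,x,\alpha)|\le K(T+1)$ for every $\alpha\in\Ac$, so $\|v\|_\infty\le K(T+1)$. Next, since $b,\sigma,f$ are non-anticipative and the initial datum in \eqref{SDE} enters only through its restriction to $[0,t]$, one has $X^{t,x,\alpha}=X^{t,x_{\cdot\wedge t},\alpha}$, hence $J(t,x,\alpha)=J(t,x_{\cdot\wedge t},\alpha)$ and $v(t,x)=v(t,x_{\cdot\wedge t})$. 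Fixing $(t,x),(t',x')$ with $t\le t'$ (without loss of generality) and writing $\bar x:=x_{\cdot\wedge t}$, $\bar x':=x'_{\cdot\wedge t'}$, insertion of the intermediate point $(t',\bar x)$ gives
\[
|v(t,x)-v(t',x')|\ \le\ |v(t,\bar x)-v(t',\bar x)|\ +\ |v(t',\bar x)-v(t',\bar x')|,
\]
and a direct computation with the truncation operators (using $t\le t'$) shows $\|\bar x-\bar x'\|_{t'}=\|x(\cdot\wedge t)-x'(\cdot\wedge t')\|_T$; so it suffices to bound the first (temporal) term by $c\,|t-t'|^{1/2}$ and the second (spatial) term by $c\,\|\bar x-\bar x'\|_{t'}$.

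Both bounds rest on the observation that $\alpha\mapsto J$ can be estimated uniformly in $\alpha$ and that $\sup$ over $\Ac$ is $1$-Lipschitz, so $|v(\cdot)-v(\cdot)|\le\sup_{\alpha\in\Ac}|J(\cdot,\alpha)-J(\cdot,\alpha)|$. Using the Lipschitz bounds for $f$ (in $\|\cdot\|_s$) and $g$ (in $\|\cdot\|_T$), and $|f|\le K$ to control $\big|\E\int_t^{t'}f(s,X^{t,\bar x,\alpha},\alpha_s)\,ds\big|\le K|t-t'|\le K\sqrt T\,|t-t'|^{1/2}$, one reduces the spatial term to
\[
\sup_{\alpha\in\Ac}\E\Big[\sup_{0\le s\le T}\big|X^{t',\bar x,\alpha}_s-X^{t',\bar x',\alpha}_s\big|^2\Big]\ \le\ C\,\|\bar x-\bar x'\|_{t'}^2
\]
and the temporal term to
\[
\sup_{\alpha\in\Ac}\E\Big[\sup_{0\le s\le T}\big|X^{t,\bar x,\alpha}_s-X^{t',\bar x,\alpha}_s\big|^2\Big]\ \le\ C\,|t-t'|,
\]
with $C=C(T,K)$.

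These estimates are proved along the usual lines. On $[0,t]$ (resp.\ $[0,t']$) the difference of the two solutions is known explicitly: it is $0$ (resp.\ the gap $\bar x-\bar x'$). On the ``clock-mismatch'' interval $[t,t']$ one solution is frozen at $\bar x$ while the other solves \eqref{SDE}, so by boundedness of $b,\sigma$ and the Burkholder--Davis--Gundy inequality the difference is bounded in $L^2$ by $C\,|t-t'|$. On the common evolution interval one applies Gr\"onwall's lemma to $\phi(s):=\E\big[\sup_{u\le s}|X^1_u-X^2_u|^2\big]$, using that the path-dependent Lipschitz bound $|b(r,X^1,\alpha_r)-b(r,X^2,\alpha_r)|\le K\|X^1-X^2\|_r\le K\sup_{u\le r}|X^1_u-X^2_u|$ (and likewise for $\sigma$) involves precisely the running supremum entering $\phi$. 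Since all constants depend only on $K$ and $T$, the estimates are uniform in $\alpha\in\Ac$. Combining the two bounds yields \eqref{Value_Lipschitz} with a suitable $c=c(T,K)$, and continuity on $([0,T]\times C([0,T];\R^d),d_\infty)$ follows at once, since $d_\infty((t,x),(t',x'))\to 0$ forces both $|t-t'|^{1/2}\to 0$ and $\|x(\cdot\wedge t)-x'(\cdot\wedge t')\|_T\to 0$.

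There is no genuine obstacle here. What needs care is, first, the bookkeeping with the pseudometric $d_\infty$ and the truncated paths, so that $(t',\bar x)$ is a legitimate initial condition for \eqref{SDE} and the seminorms line up; and, second, the Gr\"onwall argument in the path-dependent setting, where the Lipschitz bounds are with respect to the seminorms $\|\cdot\|_r$ rather than pointwise, and one must keep the constants independent of $\alpha$ (which is automatic, as only $K$ and $T$ enter). Alternatively, the temporal estimate could be read off \eqref{ContinuityViscProof} of Proposition \ref{P:SDE} after a further reduction, but the direct Gr\"onwall argument is cleaner and self-contained.
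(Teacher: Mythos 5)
Your proof is correct and follows essentially the same route the paper intends: the paper's proof is only a sketch ("boundedness of $f$ and $g$" for boundedness, and "proceeding as in the non-path-dependent case ... from the Lipschitz continuity and boundedness of the coefficients" for \eqref{Value_Lipschitz}), and your temporal/spatial splitting with the uniform-in-$\alpha$ Gr\"onwall--BDG stability estimates is exactly that standard argument written out in detail. Nothing in your write-up conflicts with the paper's approach, and the bookkeeping with $x_{\cdot\wedge t}$, $x'_{\cdot\wedge t'}$ and the seminorm $\|\cdot\|_{t'}$ is handled correctly.
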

\begin{proof}[\textbf{Proof.}]
The boundedness of $v$ follows directly from the boundedness of $f$ and $g$. Moreover, estimate \eqref{Value_Lipschitz} follows from \cite[Theorem 3.7]{tangzhang13} (notice however that in \cite[Lemma 3.6]{tangzhang13} the right-hand side of estimate (29) should be replaced by $c(|t-t'|^{1/2} + \|x(\cdot\wedge t) - x(\cdot\wedge t')\|_T)$, as in the right-hand side of \eqref{Value_Lipschitz} but with $x=x'$; this is indeed a consequence of the proof of that lemma when estimating the term denoted ``Part1'').
\end{proof}

\subsection{Dynamic programming principle}
\label{SubS:DPP}

In Section \ref{S:Visc}, Theorem \ref{T:Existence}, we prove that the value function $v$ is a viscosity solution of a suitable path-dependent Hamilton-Jacobi-Bellman equation. The proof of this property is standard and it is based, as usual, on the dynamic programming principle which is stated below. We prove it relying on \cite[Theorem 3.4]{CKGPR20} and on the two next technical Lemmata \ref{L:A_t} and \ref{L:DPP}. For other rigorous proofs of the dynamic programming principle in the path-dependent case we refer to \cite{ElKT1,ElKT2}.

We begin introducing some notations. For every $t\in[0,T]$, let $\F^t=(\Fc_s^t)_{s\in[0,T]}$ be the $\P$-completion of the filtration generated by $(B_{s\vee t}-B_t)_{s\in[0,T]}$. Let also $Prog(\F^t)$ denote the $\sigma$-algebra of $[t,T]\times\Omega$ of all $(\Fc_s^t)_{s\in[t,T]}$-progressive sets. Finally, let $\Ac_t$ be the subset of $\Ac$ of all $\F^t$-progressively measurable processes.

\begin{Lemma}\label{L:A_t}
Suppose that Assumption \ref{AssA} holds. Then, the value function defined by \eqref{Value} satisfies
\begin{equation}\label{Value_A_t}
v(t,x) \ = \ \sup_{\alpha\in\Ac_t} J(t,x,\alpha), \qquad \forall\,(t,x)\in[0,T]\times C([0,T];\R^d).	
\end{equation}
\end{Lemma}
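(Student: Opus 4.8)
The inequality $v(t,x) \geq \sup_{\alpha\in\Ac_t} J(t,x,\alpha)$ is trivial since $\Ac_t \subseteq \Ac$. So the plan is to prove the reverse inequality: given an arbitrary $\alpha\in\Ac$ and $\eps>0$, I would produce an $\tilde\alpha\in\Ac_t$ with $J(t,x,\tilde\alpha) \geq J(t,x,\alpha) - \eps$. The key structural observation is that on the interval $[t,T]$, the solution $X^{t,x,\alpha}$ depends only on the Brownian increments after time $t$, i.e.\ on the process $(B_{s\vee t} - B_t)_{s\in[0,T]}$ which generates $\F^t$. Moreover, since $x$ is a fixed (deterministic) path, the initial condition $\xi = x_{\cdot\wedge t}$ is $\Fc_0^t$-measurable, hence the whole setup is adapted to $\F^t$ once we restrict the control to be so. The cost functional $J(t,x,\alpha)$ depends on $\alpha$ only through its restriction to $[t,T]$ (values of $\alpha_s$ for $s<t$ never enter \eqref{SDE} on $(t,T]$ nor the reward).

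The heart of the argument is therefore a conditioning/regular-conditional-probability step: for a general $\alpha\in\Ac$, one conditions on $\Fc_t$ and uses that, $\P$-a.s., the conditional law of the future reward given $\Fc_t$ equals $J(t,x,\alpha^\omega)$ for a suitable "frozen" control $\alpha^\omega$ that, by a measurable-selection argument, can be taken in $\Ac_t$ (it depends only on the post-$t$ increments). Concretely, I would invoke \cite[Theorem 3.4]{CKGPR20} — which the paper explicitly flags as the tool underpinning the DPP — to get that the value obtained by optimizing over $\Ac$ coincides with the one obtained over $\Ac_t$; alternatively, one writes $J(t,x,\alpha) = \E[\E[\,\cdot\mid\Fc_t]]$, notes that the inner conditional expectation is $\P$-a.s.\ bounded above by $\sup_{\tilde\alpha\in\Ac_t} J(t,x,\tilde\alpha)$ (after identifying the conditional problem with an $\F^t$-problem started from the deterministic path $x$), and takes expectations. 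This uses in an essential way that the initial condition is the \emph{deterministic} path $x$ rather than a general random $\xi\in\mathbf S_2(\Fc_t)$, so that no extra randomness from the initial datum needs to be carried along.

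The main obstacle — and the reason this is stated as a separate lemma rather than a one-line remark — is the measurability bookkeeping in the conditioning step: one must check that the map $\omega \mapsto \alpha^\omega$ can be chosen jointly measurable and $\F^t$-progressive, that the pathwise identity for $X^{t,x,\alpha}$ on $[t,T]$ in terms of the shifted Brownian motion holds up to a $\P$-null set uniformly enough to integrate, and that uniqueness of the SDE solution (Proposition \ref{P:SDE}) lets one identify $X^{t,x,\alpha^\omega}$ with the conditioned process. I expect the authors handle this by appealing directly to \cite{CKGPR20}; in the write-up I would state precisely which hypotheses of that reference are met here (Assumption \ref{AssA} gives well-posedness and the estimates), and then the equality \eqref{Value_A_t} follows. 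The two inequalities combine to give the claim.
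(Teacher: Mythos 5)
Your high-level strategy (freeze the ``past'' randomness, reduce to a control that is measurable with respect to the post-$t$ increments, then average) is indeed the strategy of the paper, but as written your plan has a genuine gap at exactly the point that constitutes the proof. First, the shortcut you propose does not exist: \cite[Theorem 3.4]{CKGPR20} is the dynamic programming principle for the value $V$ optimized over all of $\Ac$ (it is invoked later, in the proof of Theorem \ref{T:DPP}); it says nothing about replacing $\Ac$ by $\Ac_t$, and Lemma \ref{L:A_t} is proved in the paper precisely because that reference does not supply this reduction. Second, your ``alternative'' — write $J(t,x,\alpha)=\E[\E[\,\cdot\mid\Fc_t]]$ and ``note'' that the inner conditional expectation is $\P$-a.s.\ bounded by $\sup_{\tilde\alpha\in\Ac_t}J(t,x,\tilde\alpha)$ — is not a step one can simply note: identifying the conditional expectation with the reward $J(t,x,\alpha^\omega)$ of a genuine element $\alpha^\omega\in\Ac_t$ (jointly measurably in $\omega$, and with the conditioned dynamics identified with the solution of the frozen SDE) is the whole content of the lemma, and regular conditional probabilities plus measurable selection are exactly the tools the paper avoids because they are delicate here.

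What actually closes the gap in the paper is a factorization rather than a conditioning argument: since $\Fc_s=\Gc_t\vee\Fc_s^t$ for $s\in[t,T]$, Doob's measurability theorem gives a representation $\alpha_{|[t,T]}=\boldsymbol{\mathrm a}^t(\mathcal I^t,\mathbf B^t)$ with $\boldsymbol{\mathrm a}^t$ measurable with respect to $Prog(\F^t)\otimes\Bc(\R^m)^{[0,t]}$, where $\mathbf B^t$ is the Brownian path up to time $t$. For each frozen $y$, the SDE driven by the control $\boldsymbol{\mathrm a}^t(\cdot,\cdot,y)$ is well posed and the solution field is jointly measurable; the freezing lemma together with pathwise uniqueness identifies $X^{t,x,\mathbf B^t}$ with $X^{t,x,\alpha}$; and then Fubini with respect to the law $\boldsymbol\mu^t$ of $\mathbf B^t$ (using independence of $\Gc_t$ and $\Fc_T^t$) expresses $J(t,x,\alpha)$ as $\int J(t,x,\beta^y)\,\boldsymbol\mu^t(dy)$ with each $\beta^y\in\Ac_t$, which yields \eqref{Value_A_t_Proof}. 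Your proposal correctly isolates where the difficulty lies (the measurability bookkeeping and the identification of the conditioned process), but it neither carries out these steps nor points to a reference that does, so as it stands it is an outline of the statement rather than a proof.
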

\begin{proof}[\textbf{Proof.}]
Fix $(t,x)\in[0,T]\times C([0,T];\R^d)$. Since $\Ac_t\subset\Ac$, we see that $v(t,x)\geq\sup_{\alpha\in\Ac_t}J(t,x,\alpha)$. It remains to prove the reverse inequality
\begin{equation}\label{Value_A_t_Proof}
v(t,x) \ \leq \ \sup_{\alpha\in\Ac_t} J(t,x,\alpha).
\end{equation}
We split the proof of \eqref{Value_A_t_Proof} into four steps.

\vspace{1mm}

\noindent\textsc{Step I.} \emph{Additional notations.} We firstly fix some notations. Let $(\R^m)^{[0,t]}$ be the set of functions from $[0,t]$ to $\R^d$, endowed with the product $\sigma$-algebra $\Bc(\R^m)^{[0,t]}$ generated by the finite-dimensional cylindrical sets of the form: $C_{t_1,\ldots,t_n}(H)=\{y\in(\R^m)^{[0,t]}\colon(y(t_1),\ldots,y(t_n))\in H\}$, for some $t_i\in[0,t]$, $H=H_{t_1}\times\cdots\times H_{t_n}$, $H_{t_i}\in\Bc(\R^m)$. Now, consider the map $\mathbf B^t\colon\Omega\rightarrow(\R^m)^{[0,t]}$ defined as follows:
\[
\mathbf B^t\colon\omega \ \longmapsto \ (B_s(\omega))_{0\leq s\leq t}.
\]
Such a map is measurable with respect to $\Fc_t$, as a matter of fact the counterimage through $\mathbf B^t$ of a finite-dimensional cylindrical set $C_{t_1,\ldots,t_n}(H)$ clearly belongs to $\Fc_t$. In addition, the $\sigma$-algebra generated by $\mathbf B^t$ coincides with $\Gc_t:=\sigma(B_s,0\leq s\leq t)$. Notice that
\[
\Fc_t \ = \ \Gc_t\vee\Nc,
\]
where $\Nc$ is the family of $\P$-null sets.\\
Finally, let $(E^t,\mathscr E^t)$ be the measurable space given by $E^t=[t,T]\times\Omega$ and $\mathscr E^t=Prog(\F^t)$. Then, we denote by $\mathcal I^t\colon E^t\rightarrow E^t$ the identity map.

\vspace{1mm}

\noindent\textsc{Step II.} \emph{Representation of $\alpha$.} Given $\alpha\in\Ac$, let us prove that there exists a map $\boldsymbol{\mathrm a}^t\colon[t,T]\times\Omega\times(\R^m)^{[0,t]}\rightarrow A$ such that:
\begin{enumerate}[1)]
\item $\boldsymbol{\mathrm a}^t$ is measurable with respect to the product $\sigma$-algebra $Prog(\F^t)\otimes\Bc(\R^m)^{[0,t]}$;
\item the processes $\alpha_{|[t,T]}$ (denoting the restriction of $\alpha$ to $[t,T]$) and $(\boldsymbol{\mathrm a}^t(s,\cdot,\mathbf B^t))_{s\in[t,T]}$ are indistinguishable.
\end{enumerate}
In order to prove the existence of such a map $\boldsymbol{\mathrm a}^t$, we begin noticing that the following holds:
\[
\Fc_s \ = \ \Fc_t\vee\Fc_s^t \ = \ \Gc_t\vee\Fc_s^t, \qquad \forall\,s\in[t,T],
\]
where the second equality follows from the fact that $\Nc$, the family of $\P$-null sets, is contained in both $\Fc_t$ and $\Fc_s^t$. Recalling that $\alpha$ is $\F$-progressively measurable, we have that $\alpha_{|[t,T]}$ is progressively measurable with respect to the filtration
\[
\sigma\big(\Gc_t\vee\Fc_s^t\big)_{s\in[t,T]}.
\]
In other words, the map $\alpha_{|[t,T]}\colon[t,T]\times\Omega\rightarrow A$ is $Prog(\F^t)\vee(\{\emptyset,[t,T]\}\otimes\Gc_t)$-measurable, with $\{\emptyset,[t,T]\}$ denoting the trivial $\sigma$-algebra on $[t,T]$.

Now, recall the definitions of $\mathcal I^t$ and $\mathbf B^t$ from \textsc{Step I}, and let denote still by the same symbol $\mathbf B^t$ the canonical extension of $\mathbf B^t$ to $[t,T]\times\Omega$ (or, equivalently, to $E^t$), defined as $\mathbf B^t\colon[t,T]\times\Omega\rightarrow(\R^m)^{[0,t]}$ with $(t,\omega)\mapsto\mathbf B^t(\omega)$. Then, the $\sigma$-algebra generated by the pair $(\mathcal I^t,\mathbf B^t)\colon[t,T]\times\Omega\rightarrow E^t\times (\R^m)^{[0,t]}$ coincides with $Prog(\F^t)\vee(\{\emptyset,[t,T]\}\otimes\Gc_t)$. Therefore, by Doob's measurability theorem (see for instance \cite[Lemma 1.13]{Kallenber}) it follows that the restriction of $\alpha$ to $[t,T]$ can be represented as follows: $\alpha_{|[t,T]}=\boldsymbol{\mathrm a}^t(\mathcal I^t,\mathbf B^t)$, for some map $\boldsymbol{\mathrm a}^t\colon[t,T]\times\Omega\times(\R^m)^{[0,t]}\rightarrow A$ satisfying items 1)-2) above.

\vspace{1mm}

\noindent\textsc{Step III.} \emph{The stochastic process $X^{t,x,\mathbf B^t}$.} Given $\alpha\in\Ac$, let $\boldsymbol{\mathrm a}^t$ be as in \textsc{Step II}. For every $y\in(\R^m)^{[0,t]}$, let $X^{t,x,y}$ be the unique solution in $\mathbf S_2(\F)$ to the following equation:
\begin{equation}\label{SDE_y}
\begin{cases}
dX_s \ = \ b(s,X,\boldsymbol{\mathrm a}^t(s,\cdot,y))\,ds + \sigma(s,X,\boldsymbol{\mathrm a}^t(s,\cdot,y))\,dB_s, \qquad &\quad s\in(t,T], \\
X_s \ = \ x(s), &\quad s\in[0,t].
\end{cases}
\end{equation}
By the standard Picard iteration argument for the existence of a solution to equation \eqref{SDE_y}, together with Proposition 1 in \cite{StrickerYor}, we can deduce that the random field $X\colon[0,T]\times\Omega\times(\R^m)^{[0,t]}\rightarrow\R^d$ is measurable with respect to the product $\sigma$-algebra $Prog(\F^t)\otimes\Bc(\R^m)^{[0,t]}$. As a consequence, we can consider the composition of $X^{t,x,y}$ and $\mathbf B^t$, denoted $X^{t,x,\mathbf B^t}$. Using the independence of $\Gc_t=\sigma(\mathbf B^t)$ and $\Fc_T^t$, we deduce that the process $X^{t,x,\mathbf B^t}$ satisfies the following equation:
\begin{equation}\label{SDE_B^t}
\begin{cases}
dX_s \ = \ b(s,X,\boldsymbol{\mathrm a}^t(s,\cdot,\mathbf B^t))\,ds + \sigma(s,X,\boldsymbol{\mathrm a}^t(s,\cdot,\mathbf B^t))\,dB_s, \qquad &\quad s\in(t,T], \\
X_s \ = \ x(s), &\quad s\in[0,t].
\end{cases}
\end{equation}
As a matter of fact, we have
\begin{align*}
&\E\bigg[\sup_{s\in[t,T]}\bigg|X_s^{t,x,\mathbf B^t}-x(t)-\int_t^s b(r,X^{t,x,\mathbf B^t},\boldsymbol{\mathrm a}^t(r,\cdot,\mathbf B^t))dr - \int_t^s \sigma(r,X^{t,x,\mathbf B^t},\boldsymbol{\mathrm a}^t(r,\cdot,\mathbf B^t))dB_r\bigg|\bigg] \\
&= \E\bigg[\E\bigg[\sup_{s\in[t,T]}\bigg|X_s^{t,x,\mathbf B^t}-x(t)-\int_t^s b(r,X^{t,x,\mathbf B^t},\boldsymbol{\mathrm a}^t(r,\cdot,\mathbf B^t))dr \\		
&\hspace{9cm} - \int_t^s \sigma(r,X^{t,x,\mathbf B^t},\boldsymbol{\mathrm a}^t(r,\cdot,\mathbf B^t))dB_r\bigg|\bigg|\Gc_t\bigg]\bigg] \\
&= \E\bigg[\E\bigg[\sup_{s\in[t,T]}\bigg|X_s^{t,x,y}-x(t)-\int_t^s b(r,X^{t,x,y},\boldsymbol{\mathrm a}^t(r,\cdot,y))dr \\
&\hspace{9cm} - \int_t^s \sigma(r,X^{t,x,y},\boldsymbol{\mathrm a}^t(r,\cdot,y))dB_r\bigg|\bigg]_{y=\mathbf B^t}\bigg],
\end{align*}
where the last equality follows from the so-called freezing lemma, see for instance \cite[Lemma 4.1]{Baldi}. Since $X^{t,x,y}$ solves equation \eqref{SDE_y}, we have
\[
\E\bigg[\sup_{s\in[t,T]}\bigg|X_s^{t,x,y}-x(t)-\int_t^s b(r,X^{t,x,y},\boldsymbol{\mathrm a}^t(r,\cdot,y))dr - \int_t^s \sigma(r,X^{t,x,y},\boldsymbol{\mathrm a}^t(r,\cdot,y))dB_r\bigg|\bigg] \ = \ 0.
\]
Hence
\[
\E\bigg[\sup_{s\in[t,T]}\bigg|X_s^{t,x,\mathbf B^t}-x(t)-\int_t^s b(r,X^{t,x,\mathbf B^t},\boldsymbol{\mathrm a}^t(r,\cdot,\mathbf B^t))dr - \int_t^s \sigma(r,X^{t,x,\mathbf B^t},\boldsymbol{\mathrm a}^t(r,\cdot,\mathbf B^t))dB_r\bigg|\bigg] \ = \ 0.
\]
This shows that $X^{t,x,\mathbf B^t}$ solves equation \eqref{SDE_B^t}.\\
Now, recalling from \textsc{Step II} that $\alpha_{|[t,T]}$ and $(\boldsymbol{\mathrm a}^t(s,\cdot,\mathbf B^t))_{s\in[t,T]}$ are indistinguishable, and noticing that the solution to equation \eqref{SDE_proof} below depends on $\alpha$ only through its values on $[t,T]$ (namely, it depends only on $\alpha_{|[t,T]}$), we conclude that $X^{t,x,\mathbf B^t}$ solves the same equation of $X^{t,x,\alpha}$, namely
\begin{equation}\label{SDE_proof}
\begin{cases}
dX_s \ = \ b(s,X,\alpha_s)\,ds + \sigma(s,X,\alpha_s)\,dB_s, \qquad &\quad s\in(t,T], \\
X_s \ = \ x(s), &\quad s\in[0,t].
\end{cases}
\end{equation}
From pathwise uniqueness for equation \eqref{SDE_proof}, we get that $X^{t,x,\mathbf B^t}$ and $X^{t,x,\alpha}$ are also indistinguishable.

\vspace{1mm}

\noindent\textsc{Step IV.} \emph{The stochastic process $X^{t,x,\mathbf B^t}$.} Given $\alpha\in\Ac$, let $\boldsymbol{\mathrm a}^t$ be as in \textsc{Step II} and $X^{t,x,\mathbf B^t}$ as in \textsc{Step III}. Then, we have
\begin{align*}
J(t,x,\alpha) \ &= \ \E\bigg[\int_t^T f\big(s,X^{t,x,\alpha},\alpha_s\big)\,ds + g\big(X^{t,x,\alpha}\big)\bigg]	\\
&= \ \E\bigg[\int_t^T f\big(s,X^{t,x,\mathbf B^t},\boldsymbol{\mathrm a}^t(s,\cdot,\mathbf B^t)\big)\,ds + g\big(X^{t,x,\mathbf B^t}\big)\bigg].
\end{align*}
Denoting by $\boldsymbol\mu^t$ the probability distribution of $\mathbf B^t$ on $((\R^m)^{[0,t]},\Bc(\R^m)^{[0,t]})$, and recalling the independence of $\Gc_t=\sigma(\mathbf B^t)$ and $\Fc_T^t$, by Fubini's theorem we obtain
\begin{align*}
&\E\bigg[\int_t^T f\big(s,X^{t,x,\mathbf B^t},\boldsymbol{\mathrm a}^t(s,\cdot,\mathbf B^t)\big)\,ds + g\big(X^{t,x,\mathbf B^t}\big)\bigg] \\
&= \ \int_{(\R^m)^{[0,t]}} \E\bigg[\int_t^T f\big(s,X^{t,x,y},\boldsymbol{\mathrm a}^t(s,\cdot,y)\big)\,ds + g\big(X^{t,x,y}\big)\bigg] \boldsymbol\mu^t(dy).
\end{align*}
Now, fix some $a_0\in A$ and, for every $y\in(\R^m)^{[0,t]}$, denote
\[
\beta_s^y \ := \ a_0\,1_{[0,t)}(s) + \boldsymbol{\mathrm a}^t(s,\cdot,y)\,1_{[t,T]}, \qquad \forall\,s\in[0,T].
\]
Notice that $\beta^y\in\Ac_t$. Moreover, recalling that $X^{t,x,y}$ solves equation \eqref{SDE_y}, we see that it solves the same equation of $X^{t,x,\beta^y}$. Then, by pathwise uniqueness, $X^{t,x,y}$ and $X^{t,x,\beta^y}$ are indistinguishable. In conclusion, we obtain
\begin{align*}
&\int_{(\R^m)^{[0,t]}} \E\bigg[\int_t^T f\big(s,X^{t,x,y},\boldsymbol{\mathrm a}^t(s,\cdot,y)\big)\,ds + g\big(X^{t,x,y}\big)\bigg] \boldsymbol\mu^t(dy) \\
&= \ \int_{(\R^m)^{[0,t]}} \E\bigg[\int_t^T f\big(s,X^{t,x,\beta^y},\beta_s^y\big)\,ds + g\big(X^{t,x,\beta^y}\big)\bigg] \boldsymbol\mu^t(dy) \\
&= \ \int_{(\R^m)^{[0,t]}} J(t,x,\beta^y)\,\boldsymbol\mu^t(dy) \ \leq \ \int_{(\R^m)^{[0,t]}} \sup_{\gamma\in\Ac_t}J(t,x,\gamma)\,\boldsymbol\mu^t(dy) \ = \ \sup_{\gamma\in\Ac_t}J(t,x,\gamma).
\end{align*}
This proves that $J(t,x,\alpha)\leq\sup_{\gamma\in\Ac_t}J(t,x,\gamma)$, for every $\alpha\in\Ac$. Then, inequality \eqref{Value_A_t_Proof} follows from the arbitrariness of $\alpha$.
\end{proof}

\noindent Next lemma expresses in terms of $v$ the value of the optimal control problem formulated at time $t$, with \emph{random} initial condition $\xi\in\mathbf S_2(\F)$. In order to state such a lemma, we introduce the function $V\colon[0,T]\times\mathbf S_2(\F)\rightarrow\R$ defined as follows:
\begin{equation}\label{Value_V}
V(t,\xi) \ = \ \sup_{\alpha\in\Ac}\E\bigg[\int_t^T f\big(s,X^{t,\xi,\alpha},\alpha_s\big)\,dr + g\big(X^{t,\xi,\alpha}\big)\bigg],
\end{equation}
for every $t\in[0,T]$, $\xi\in\mathbf S_2(\F)$. Clearly, when $\xi\equiv x\in C([0,T];\R^d)$ we have $V(t,x)=v(t,x)$.

\begin{Lemma}\label{L:DPP}
Suppose that Assumption \ref{AssA} holds. Let $t\in[0,T]$ and $\xi\in\mathbf S_2(\F)$, then
\begin{equation}\label{Value_Random}
V(t,\xi) \ = \ \E\big[v(t,\xi)\big].
\end{equation}
\end{Lemma}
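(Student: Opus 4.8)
The plan is to prove the two inequalities $V(t,\xi)\le\E[v(t,\xi)]$ and $V(t,\xi)\ge\E[v(t,\xi)]$ separately. Throughout I write $\bar\xi:=\xi_{\cdot\wedge t}$, which is an $\Fc_t$-measurable $C([0,T];\R^d)$-valued random variable; since the coefficients are non-anticipative we have $X^{t,\xi,\alpha}=X^{t,\bar\xi,\alpha}$ and $v(t,\xi)=v(t,\bar\xi)$ (as $v(t,\cdot)$ depends on its path argument only through its restriction to $[0,t]$). Both inequalities are obtained by conditioning with respect to $\Fc_t$ (equivalently, with respect to $\Gc_t=\sigma(\mathbf B^t)$, using $\Fc_t=\Gc_t\vee\Nc$), the main tool being the freezing lemma together with the independence of $\Gc_t$ and $\Fc_T^t$, exactly as in \textsc{Steps II--IV} of the proof of Lemma \ref{L:A_t}; in fact the first inequality is essentially Lemma \ref{L:A_t} with a random (rather than deterministic) initial condition.

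For $V(t,\xi)\le\E[v(t,\xi)]$, fix $\alpha\in\Ac$ and represent $\alpha_{|[t,T]}=\boldsymbol{\mathrm a}^t(\mathcal I^t,\mathbf B^t)$ as in \textsc{Step II} of Lemma \ref{L:A_t}, with $\boldsymbol{\mathrm a}^t$ being $Prog(\F^t)\otimes\Bc(\R^m)^{[0,t]}$-measurable; moreover, since $\bar\xi$ is $\Gc_t\vee\Nc$-measurable and $C([0,T];\R^d)$ is Polish, Doob's measurability theorem gives a Borel map $\boldsymbol\xi\colon(\R^m)^{[0,t]}\to C([0,T];\R^d)$ with $\bar\xi=\boldsymbol\xi(\mathbf B^t)$ $\P$-a.s. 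For $y\in(\R^m)^{[0,t]}$ set $\beta_s^y:=a_0\,1_{[0,t)}(s)+\boldsymbol{\mathrm a}^t(s,\cdot,y)\,1_{[t,T]}(s)\in\Ac_t$. Conditioning on $\Gc_t$ and applying the freezing lemma exactly as in \textsc{Steps III--IV} of Lemma \ref{L:A_t} (note that $X^{t,\xi,\alpha}$ restricted to $[t,T]$ is a functional of $\bar\xi$ and the increments of $B$ after $t$, which agrees with $X^{t,\boldsymbol\xi(y),\beta^y}$ once $\mathbf B^t=y$ is frozen, by pathwise uniqueness) yields
\[
\E\Big[\int_t^T f\big(s,X^{t,\xi,\alpha},\alpha_s\big)\,ds+g\big(X^{t,\xi,\alpha}\big)\Big]=\E\big[\Phi(\mathbf B^t)\big],\qquad \Phi(y):=J\big(t,\boldsymbol\xi(y),\beta^y\big).
\]
Since $\Phi(y)=J(t,\boldsymbol\xi(y),\beta^y)\le v(t,\boldsymbol\xi(y))$, the left-hand side is $\le\E[v(t,\boldsymbol\xi(\mathbf B^t))]=\E[v(t,\bar\xi)]$; taking the supremum over $\alpha\in\Ac$ gives $V(t,\xi)\le\E[v(t,\xi)]$.

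For the reverse inequality I would paste together $\eps$-optimal controls from $\Ac_t$, using the uniform Lipschitz continuity of the coefficients in place of a genuine measurable selection. Fix $\eps>0$; by separability of $C([0,T];\R^d)$ choose a countable Borel partition $\{D_j\}_{j\ge1}$ and points $x^j\in D_j$ with $\|x-x^j\|_T<\eps$ for all $x\in D_j$, and by Lemma \ref{L:A_t} controls $\alpha^j\in\Ac_t$ with $J(t,x^j,\alpha^j)\ge v(t,x^j)-\eps$. Set $\alpha_s^\xi:=a_0\,1_{[0,t)}(s)+\sum_j 1_{\{\bar\xi\in D_j\}}\alpha^j_s\,1_{[t,T]}(s)$; since each $\{\bar\xi\in D_j\}\in\Fc_t$ one checks $\alpha^\xi\in\Ac$, and by pathwise uniqueness and locality of the state equation $1_{\{\bar\xi\in D_j\}}X^{t,\xi,\alpha^\xi}=1_{\{\bar\xi\in D_j\}}X^{t,\xi,\alpha^j}$. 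Conditioning on $\Fc_t$ and using the freezing lemma (now with $\alpha^j$ already $\F^t$-progressive) gives $\E[\,1_{\{\bar\xi\in D_j\}}(\int_t^T f(s,X^{t,\xi,\alpha^j},\alpha^j_s)\,ds+g(X^{t,\xi,\alpha^j}))\,]=\E[\,1_{\{\bar\xi\in D_j\}}J(t,\bar\xi,\alpha^j)\,]$. On $\{\bar\xi\in D_j\}$ we have $\|\bar\xi-x^j\|_t\le\|\bar\xi-x^j\|_T<\eps$, so by the uniform-in-$\alpha$ estimate $|J(t,y,\alpha)-J(t,y',\alpha)|\le c\|y-y'\|_t$ (proved as in Proposition \ref{P:Value} from Assumption \ref{AssA}) and by \eqref{Value_Lipschitz}, $J(t,\bar\xi,\alpha^j)\ge v(t,\bar\xi)-(1+2c)\eps$ there. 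Summing over $j$,
\[
V(t,\xi)\ \ge\ \E\Big[\int_t^T f\big(s,X^{t,\xi,\alpha^\xi},\alpha^\xi_s\big)\,ds+g\big(X^{t,\xi,\alpha^\xi}\big)\Big]\ =\ \sum_j\E\big[\,1_{\{\bar\xi\in D_j\}}J(t,\bar\xi,\alpha^j)\,\big]\ \ge\ \E\big[v(t,\bar\xi)\big]-(1+2c)\eps,
\]
and letting $\eps\to0$ gives $V(t,\xi)\ge\E[v(t,\xi)]$; together with the previous step this proves \eqref{Value_Random}.

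The step I expect to be the main obstacle is the reverse inequality: one must verify carefully that the pasted control $\alpha^\xi$ is $\F$-progressively measurable and that the reward functional genuinely decomposes along the $\Fc_t$-measurable partition $\{\{\bar\xi\in D_j\}\}_j$ — that is, the localization identity $1_{\{\bar\xi\in D_j\}}X^{t,\xi,\alpha^\xi}=1_{\{\bar\xi\in D_j\}}X^{t,\xi,\alpha^j}$ and the subsequent freezing-lemma identity — which requires the same bookkeeping with the progressive $\sigma$-algebras and the independence structure already developed in Lemma \ref{L:A_t}. The point that keeps this elementary, and lets us avoid a measurable selection theorem entirely, is the availability of the uniform (in the control) Lipschitz bound for $J(t,\cdot,\alpha)$ and the continuity estimate \eqref{Value_Lipschitz} for $v$.
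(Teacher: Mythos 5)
Your argument is correct, but it follows a genuinely different route from the paper's. The paper first reduces to $\xi$ taking finitely many values (using the Lipschitz continuity of $v$ and $V$ in the path variable), then proves $V(t,\xi)\leq\E[v(t,\xi)]$ by invoking the randomization lemmas of \cite{CKGPR20} (Lemmas B.2--B.3): the control $\alpha$ is replaced by $\beta_s=\alpha_s 1_{[0,t)}(s)+\mathrm a_s(\xi,U)1_{[t,T]}(s)$, built from an $\Fc_t$-measurable uniform random variable $U$ independent of $\xi$ (which forces the separate treatment of $t=0$), so that the solution decomposes along the finite partition with $\{X^{t,x_i,\beta_i}\}_i$ and $\{\beta_i\}_i$ independent of $\{E_i\}_i$; the reverse inequality is then obtained by pasting $\eps$-optimal controls from $\Ac_t$ over the finite partition. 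You instead work with general $\xi\in\mathbf S_2(\Fc_t)$ throughout: for the upper bound you extend the Doob-representation/freezing argument of Lemma \ref{L:A_t} by writing both $\alpha_{|[t,T]}=\boldsymbol{\mathrm a}^t(\mathcal I^t,\mathbf B^t)$ and $\xi_{\cdot\wedge t}=\boldsymbol\xi(\mathbf B^t)$ and conditioning on $\Gc_t$, which avoids the randomization lemmas and the $t=0$ dichotomy altogether; for the lower bound you replace the finite-valued approximation by a countable $\eps$-fine Borel partition of $C([0,T];\R^d)$ together with the Lipschitz estimate for $J(t,\cdot,\alpha)$ uniform in $\alpha$ and \eqref{Value_Lipschitz}, again avoiding measurable selection. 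What your route buys is self-containedness (no appeal to \cite{CKGPR20} beyond what Lemma \ref{L:A_t} already uses) and uniformity in $t$; what it costs is that the joint-measurability statement for the solution field must be extended to initial conditions $\boldsymbol\xi(y)$ depending measurably on $y$ (the same fixed-point argument as in \textsc{Step III} of Lemma \ref{L:A_t} covers this, but it should be said), and the localization and freezing identities must be checked over a countably infinite partition (harmless, since $f,g$ are bounded, so the series converges by dominated convergence). Two cosmetic points: the expression $\sum_j 1_{\{\bar\xi\in D_j\}}\alpha^j_s$ is not literally a sum, since $A$ carries no linear structure, and should be phrased as a piecewise definition on the partition $\{\bar\xi\in D_j\}$; and in the freezing step for the lower bound you should state explicitly that, for fixed $x$, the functional $\int_t^T f(s,X^{t,x,\alpha^j},\alpha^j_s)\,ds+g(X^{t,x,\alpha^j})$ is $\Fc_T^t$-measurable, hence independent of $\Fc_t$, which is what legitimizes replacing it by $J(t,\bar\xi,\alpha^j)$ under the conditional expectation.
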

\begin{proof}[\textbf{Proof.}]
We begin noting that for $t=0$ it is clear that equality \eqref{Value_Random} holds true, as a matter fact $\Fc_0$ is the family of $\P$-null sets, therefore $\xi$ is a.s. equal to a constant and \eqref{Value_Random} follows from the fact that $V(t,x)=v(t,x)$, for every $x\in C([0,T];\R^d)$. For this reason, in the sequel we suppose that $t>0$. We split the rest of the proof into four steps.

\vspace{1mm}

\noindent\textsc{Step I.} \emph{Additional notations.} Firstly, we fix some notations. For a fixed $t\in(0,T]$, let $\G^t=(\Gc_s^t)_{s\geq0}$ be given by
\[
\Gc_s^t \ := \ \Fc_t\vee\Fc_s^t, \qquad \forall\,s\geq0.
\]
Moreover, let $\mathbf S_2(\G^t)$ (resp. $\mathbf S_2(\Fc_t)$) be the set of $d$-dimensional continuous $\G^t$-progressively measurable (resp. $\Bc([0,T])\otimes\Fc_t$-measurable) processes $X\colon[0,T]\times\Omega\rightarrow\R^d$ satisfying the integrability condition \eqref{S_p}. Notice that Proposition \ref{P:SDE} extends to the case with initial condition $\xi\in\mathbf S_2(\Fc_t)$ rather than $\xi\in\mathbf S_2(\F)$. In particular, given $\xi\in\mathbf S_2(\Fc_t)$ and $\alpha\in\Ac$, equation \eqref{SDE} admits a unique solution $X^{t,\xi,\alpha}\in\mathbf S_2(\G^t)$. Then, for $\xi\in\mathbf S_2(\Fc_t)$ we define $V(t,\xi)$ as in \eqref{Value_V}.

\vspace{1mm}

\noindent\textsc{Step II.} \emph{Preliminary remarks.} We begin noting that $X^{t,\xi,\alpha}=X^{t,\xi_{\cdot\wedge t},\alpha}$, so that it is enough to prove equality \eqref{Value_Random} with $\xi_{\cdot\wedge t}$ in place of $\xi$. More generally, we shall prove the validity of \eqref{Value_Random} in the case when $\xi\in\mathbf S_2(\Fc_t)$.

Now, recall that $v$ is Lipschitz in the variable $x$ (see Proposition \ref{P:Value}) and observe that, by the same arguments, $V$ is also Lipschitz in its second argument. Furthermore, both $v$ and $V$ are bounded. Notice also that given $\xi\in\mathbf S_2(\Fc_t)$ there exists a sequence $\{\xi_k\}_k\subset\mathbf S_2(\Fc_t)$ converging to $\xi$, with $\xi_k$ taking only a finite number of values. As a consequence, from the continuity of $v$ and $V$, it is enough to prove \eqref{Value_Random} with $\xi\in\mathbf S_2(\Fc_t)$ taking only a finite number of values. Then, from now on, let us suppose that
\begin{equation}\label{xi_discr}
\xi \ = \ \sum_{i=1}^n x_i\,1_{E_i},
\end{equation}
for some $n\in\N$, $x_i\in C([0,T];\R^d)$, $E_i\in\Fc_t$, with $\{E_i\}_{i=1,\ldots,n}$ being a partition of $\Omega$.

\vspace{1mm}

\noindent\textsc{Step III.} \emph{Proof of the inequality $V(t,\xi)\leq\E[v(t,\xi)]$.} Since $\xi\in\mathbf S_2(\F)$ takes only a finite number of values, by \cite[Lemma B.3]{CKGPR20} (here we use that $t>0$, so in particular $\Fc_t$ has the property required by \cite[Lemma B.3]{CKGPR20}, namely there exists a $\Fc_t$-measurable random variable having uniform distribution on $[0,1]$) there exists a $\Fc_t$-measurable random variable $U\colon\Omega\rightarrow\R$, having uniform distribution on $[0,1]$ and being independent of $\xi$. As a consequence, from \cite[Lemma B.2]{CKGPR20} it follows that, for every $\alpha\in\Ac$, there exists a measurable function
\[
\mathrm a\colon\big([0,T]\times\Omega\times C([0,T];\R^d)\times[0,1],Prog(\F^t)\otimes\Bc(C([0,T];\R^d))\otimes\mathcal B([0,1])\big) \longrightarrow (A,\Bc(A))
\]
such that
\[
\beta_s \ := \ \alpha_s\,1_{[0,t)}(s) + \mathrm a_s(\xi,U)\,1_{[t,T]}(s), \qquad \forall\,s\in[0,T]
\]
belongs to $\Ac$ and
\[
\Big(\xi,(\mathrm a_s(\xi,U))_{s\in[t,T]},(B_s-B_t)_{s\in[t,T]}\Big) \ \overset{\mathscr L}{=} \ \Big(\xi,(\alpha_s)_{s\in[t,T]},(B_s-B_t)_{s\in[t,T]}\Big),
\]
where $\overset{\mathscr L}{=}$ means equality in law. Then, by the same arguments as in \cite[Proposition 1.137]{fabbrigozziswiech}, we get
\[
\big(X_s^{t,\xi,\alpha},\alpha_s\big)_{s\in[0,T]} \ \overset{\mathscr L}{=} \ \big(X_s^{t,\xi,\beta},\beta_s\big)_{s\in[0,T]}.
\]
Moreover, recalling \eqref{xi_discr}, define
\[
\beta_{i,s} \ := \ \alpha_s\,1_{[0,t)}(s) + \mathrm a_s(x_i,U)\,1_{[t,T]}(s), \qquad \forall\,s\in[0,T],\,i=1,\ldots,n.
\]
Since $X^{t,\xi,\beta}$ and $X^{t,x_1,\beta_1}\,1_{E_1}+\cdots+X^{t,x_n,\beta_n}\,1_{E_n}$ solve the same equation, they are $\P$-indistinguishable. Hence
\begin{align*}
\E\bigg[\int_t^T f\big(s,X^{t,\xi,\alpha},\alpha_s\big)\,ds + g\big(X^{t,\xi,\alpha}\big)\bigg] \ &= \ \E\bigg[\int_t^T f\big(s,X^{t,\xi,\beta},\beta_s\big)\,ds + g\big(X^{t,\xi,\beta}\big)\bigg] \\
&= \ \E\bigg[\sum_{i=1}^n\bigg(\int_t^T f\big(s,X^{t,x_i,\beta_i},\beta_{i,s}\big)\,ds + g\big(X^{t,x_i,\beta_i}\big)\bigg)1_{E_i}\bigg].
\end{align*}
Recalling that both $\{X^{t,x_i,\beta_i}\}_i$ and $\{\beta_i\}_i$ are independent of $\{E_i\}_i$, we have
\begin{align*}
&\E\bigg[\sum_{i=1}^n\bigg(\int_t^T f\big(s,X^{t,x_i,\beta_i},\beta_{i,s}\big)\,ds + g\big(X^{t,x_i,\beta_i}\big)\bigg)1_{E_i}\bigg] \\
&= \ \E\bigg[\sum_{i=1}^n\E\bigg[\int_t^T f\big(s,X^{t,x_i,\beta_i},\beta_{i,s}\big)\,ds + g\big(X^{t,x_i,\beta_i}\big)\bigg]1_{E_i}\bigg] \\
&= \sum_{i=1}^n\E\bigg[\E\bigg[\int_t^T f\big(s,X^{t,x_i,\beta_i},\beta_{i,s}\big)\,ds + g\big(X^{t,x_i,\beta_i}\big)\bigg]1_{E_i}\bigg] \ \leq \ \sum_{i=1}^n\E\Big[v(t,x_i)\,1_{E_i}\Big] \ = \ \E\big[v(t,\xi)\big].
\end{align*}
Then, the inequality $V(t,\xi)\leq\E[v(t,\xi)]$ follows from the arbitrariness of $\alpha$.

\vspace{1mm}

\noindent\textsc{Step IV.} \emph{Proof of the inequality $V(t,\xi)\geq\E[v(t,\xi)]$.} Take $\xi\in\mathbf S_2(\Fc_t)$ as in \eqref{xi_discr}. Then, from equality \eqref{Value_A_t} of Lemma \ref{L:A_t}, for every $\eps>0$ and $i=1,\ldots,n$, there exists $\beta_i^\eps\in\Ac_t$ such that
\[
v(t,x_i) \ \leq \ \E\bigg[\int_t^T f\big(s,X^{t,x_i,\beta_i^\eps},\beta_{i,s}^\eps\big)\,ds + g\big(X^{t,x_i,\beta_i^\eps}\big)\bigg] + \eps.
\]
Let
\[
\beta^\eps \ := \ \sum_{i=1}^n \beta_i\,1_{E_i}.
\]
We have $\beta^\eps\in\Ac$, moreover $X^{t,\xi,\beta^\eps}$ and $X^{t,x_1,\beta_1^\eps}\,1_{E_1}+\cdots+X^{t,x_n,\beta_n^\eps}\,1_{E_n}$ solve the same equation, therefore they are $\P$-indistinguishable. Therefore (exploiting the independence of both $\{X^{t,x_i,\beta_i^\eps}\}_i$ and $\{\beta_i^\eps\}_i$ from $\{E_i\}_i$)
\begin{align*}
\E\big[v(t,\xi)\big] \ &= \ \sum_{i=1}^n \E\Big[v(t,x_i)\,1_{E_i}\Big] \\
&\leq \ \sum_{i=1}^n\E\bigg[\E\bigg[\int_t^T f\big(s,X^{t,x_i,\beta_i^\eps},\beta_{i,s}^\eps\big)\,ds + g\big(X^{t,x_i,\beta_i^\eps}\big)\bigg]1_{E_i}\bigg] + \eps \\
&= \ \E\bigg[\sum_{i=1}^n\E\bigg[\int_t^T f\big(s,X^{t,x_i,\beta_i^\eps},\beta_{i,s}^\eps\big)\,ds + g\big(X^{t,x_i,\beta_i^\eps}\big)\bigg]1_{E_i}\bigg] + \eps \\
&= \ \E\bigg[\sum_{i=1}^n\bigg(\int_t^T f\big(s,X^{t,x_i,\beta_i^\eps},\beta_{i,s}^\eps\big)\,ds + g\big(X^{t,x_i,\beta_i^\eps}\big)\bigg)1_{E_i}\bigg] + \eps \\
&= \ \E\bigg[\int_t^T f\big(s,X^{t,\xi,\beta^\eps},\beta_s^\eps\big)\,ds + g\big(X^{t,\xi,\beta^\eps}\big)\bigg] + \eps \ \leq \ V(t,\xi) + \eps.
\end{align*}
From the arbitrariness of $\eps$, the inequality $\E[v(t,\xi)]\leq V(t,\xi)$ follows.
\end{proof}

\begin{Theorem}\label{T:DPP}
Suppose that Assumption \ref{AssA} holds.
Then the value function $v$ satisfies the \textbf{dynamic programming principle}: for every $t,s\in[0,T]$, with $t\leq s$, and every $x\in C([0,T];\R^d)$ it holds that
\[
v(t,x) \ = \ \sup_{\alpha\in\Ac}\E\bigg[\int_t^s f\big(r,X^{t,x,\alpha},\alpha_r\big)\,dr + v\big(s, X^{t,x,\alpha}\big)\bigg].
\]
\end{Theorem}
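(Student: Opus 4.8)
The plan is to derive the dynamic programming principle from the two technical lemmata already established, namely Lemma \ref{L:A_t} (which allows restricting to controls in $\Ac_t$) and Lemma \ref{L:DPP} (which identifies the value of the problem started at time $s$ with a random initial condition, namely $V(s,\xi)=\E[v(s,\xi)]$). The key structural fact to exploit is a flow/concatenation property of the controlled SDE: if $\alpha\in\Ac$ and we denote $X^{t,x,\alpha}$ the solution on $[t,T]$, then for $t\le s\le T$ the restriction of $X^{t,x,\alpha}$ to $[s,T]$ coincides $\P$-a.s.\ with $X^{s,\xi,\alpha}$ where $\xi:=X^{t,x,\alpha}_{\cdot\wedge s}\in\mathbf S_2(\Fc_s)$; this follows from pathwise uniqueness for \eqref{SDE}. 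Combined with the additivity of the reward (splitting $\int_t^T = \int_t^s + \int_s^T$) and the tower property, this reduces both inequalities to Lemma \ref{L:DPP}.

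For the inequality ``$\le$'', I would fix $\alpha\in\Ac$, set $\xi:=X^{t,x,\alpha}_{\cdot\wedge s}$, and write
\[
J(t,x,\alpha) \ = \ \E\bigg[\int_t^s f\big(r,X^{t,x,\alpha},\alpha_r\big)\,dr\bigg] + \E\bigg[\int_s^T f\big(r,X^{s,\xi,\alpha},\alpha_r\big)\,dr + g\big(X^{s,\xi,\alpha}\big)\bigg],
\]
using the flow property in the second term. The inner conditional expectation given $\Fc_s$ of the bracket on $[s,T]$ is, by the freezing/conditioning argument already used in the proof of Lemma \ref{L:A_t} and by the independence of the Brownian increments after $s$, bounded above by $V(s,\xi)=\E[v(s,\xi')]|_{\xi'=\xi}$... more precisely one obtains $\E[\,\cdot\mid\Fc_s]\le v(s,X^{t,x,\alpha}_{\cdot\wedge s})$ pathwise, which after taking expectation and adding the running cost on $[t,s]$ gives $J(t,x,\alpha)\le \E[\int_t^s f\,dr + v(s,X^{t,x,\alpha})]$. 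Taking the supremum over $\alpha$ yields ``$\le$''.

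For the reverse inequality ``$\ge$'', I would use an $\eps$-optimal control strategy: given $\eps>0$, pick $\alpha^1\in\Ac$ that is $\eps$-optimal for $v(t,x)$ restricted to $[t,s]$ in the sense that it nearly attains the supremum on the right-hand side, run it up to time $s$ to produce $\xi:=X^{t,x,\alpha^1}_{\cdot\wedge s}\in\mathbf S_2(\Fc_s)$, and then—using Lemma \ref{L:DPP} together with the measurable-selection construction from Step III of the proof of Lemma \ref{L:DPP} (which produces, for a random initial condition built from $\xi$ and an auxiliary independent uniform random variable, a control in $\Ac$ whose reward is $\eps$-close to $\E[v(s,\xi)]$)—concatenate to obtain a single $\beta\in\Ac$ with $\beta=\alpha^1$ on $[0,s)$. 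By the flow property and additivity of the reward, $J(t,x,\beta)\ge \E[\int_t^s f(r,X^{t,x,\beta},\beta_r)\,dr + v(s,X^{t,x,\beta})] - 2\eps \ge \sup_\alpha\E[\int_t^s f\,dr + v(s,X^{t,x,\alpha})] - 3\eps$ (the last step again by $\eps$-optimality of $\alpha^1$), and letting $\eps\to0$ gives ``$\ge$''.

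The main obstacle is the rigorous handling of the concatenation of controls and the verification that the resulting process lies in $\Ac$ and that its controlled trajectory is the pasting of the two pieces—i.e.\ the measurability and independence bookkeeping. Fortunately this is essentially the content of the constructions already carried out in the proofs of Lemmata \ref{L:A_t} and \ref{L:DPP} (the representation $\alpha_{|[t,T]}=\boldsymbol{\mathrm a}^t(\mathcal I^t,\mathbf B^t)$, the freezing lemma, and the measurable selection via \cite[Lemmata B.2, B.3]{CKGPR20}), so the proof of the theorem should amount to assembling these ingredients: flow property from pathwise uniqueness, conditioning on $\Fc_s$ via the freezing lemma, and Lemma \ref{L:DPP} to pass between $V(s,\cdot)$ on random initial data and $\E[v(s,\cdot)]$. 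A secondary technical point is that $X^{t,x,\alpha}_{\cdot\wedge s}$ is a path-valued $\Fc_s$-measurable random variable in $\mathbf S_2(\Fc_s)$, so Lemma \ref{L:DPP} applies to it directly; one should also note that it suffices, by continuity and boundedness of $v$ and $V$ together with the approximation of $\mathbf S_2(\Fc_s)$-elements by finitely-valued ones (as in Step II of the proof of Lemma \ref{L:DPP}), to argue the $\ge$ direction first for such finitely-valued $\xi$ and then pass to the limit.
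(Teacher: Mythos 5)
Your proposal is correct in substance, but it takes a genuinely different route from the paper. The paper's proof of Theorem \ref{T:DPP} is a two-line reduction: it invokes the dynamic programming principle for the function $V$ of \eqref{Value_V} from \cite[Theorem 3.4]{CKGPR20} and then converts $V(s,X^{t,x,\alpha})$ into $\E[v(s,X^{t,x,\alpha})]$ via Lemma \ref{L:DPP}. You instead reconstruct that external DPP from scratch: flow property from pathwise uniqueness, splitting of the reward, an $\eps$-optimal concatenation for the lower bound, and Lemma \ref{L:DPP} as the bridge between $V(s,\cdot)$ on random initial data and $\E[v(s,\cdot)]$. This buys self-containedness at the price of redoing the measurability bookkeeping that the paper outsources to \cite{CKGPR20}. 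Two remarks that would streamline your argument: for the inequality ``$\leq$'' you do not need the pathwise conditional estimate $\E[\,\cdot\mid\Fc_s]\leq v(s,\xi)$ (whose rigorous proof would force you to rerun the Doob-representation and freezing machinery of Lemma \ref{L:A_t} at time $s$ with a \emph{random} initial path); since $\alpha$ itself is one of the competitors in the supremum defining $V(s,\xi)$, the unconditional bound $\E[\int_s^T f(r,X^{s,\xi,\alpha},\alpha_r)\,dr+g(X^{s,\xi,\alpha})]\leq V(s,\xi)=\E[v(s,\xi)]$ is immediate and suffices. Similarly, for ``$\geq$'' you do not need to re-enter the measurable-selection construction of Steps III--IV of Lemma \ref{L:DPP}, nor the finitely-valued approximation of $\xi$: an $\eps$-optimal control for $V(s,\xi)$ exists trivially because $V(s,\xi)$ is defined as a supremum for the fixed random variable $\xi=X^{t,x,\alpha^1}_{\cdot\wedge s}\in\mathbf S_2(\Fc_s)$, and Lemma \ref{L:DPP}, used as a black box, already contains all the selection and approximation work; concatenation plus pathwise uniqueness then closes the argument exactly as you describe.
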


\begin{proof}[\textbf{Proof.}]
This follows directly from \cite[Theorem 3.4]{CKGPR20} and Lemma \ref{L:DPP}. As a matter of fact, let $V$ be the function given by \eqref{Value_V}. From \cite[Theorem 3.4]{CKGPR20} we get the dynamic programming principle for $V$:
\[
V(t,x) \ = \ \sup_{\alpha\in\Ac}\bigg\{\E\bigg[\int_t^s f\big(r,X^{t,x,\alpha},\alpha_r\big)\,dr\bigg] + V\big(s, X^{t,x,\alpha}\big)\bigg\}.
\]
Moreover, by Lemma \ref{L:DPP} we know that
\[
V(s,X^{t,x,\alpha}) \ = \ \E\big[v\big(s,X^{t,x,\alpha}\big)\big],
\]
from which the claim follows.
\end{proof}

\section{Path Dependent HJB equations and viscosity solutions}
\label{S:Visc}

\subsection{Definition of path-dependent viscosity solutions}

In the present paper we adopt the standard definitions of pathwise (or functional) derivatives of a map $u\colon[t_0,T]\times C([0,T];\R^d)\rightarrow\R$, $t_0\in[0,T)$, as they were introduced in the seminal paper \cite{dupire}, and further developed by \cite{contfournie10,ContFournieAP} and \cite[Section 2]{CR19}. We report in Appendix \ref{App:PathwiseDeriv} a coincise presentation of these tools. Just to fix notations, we recall here that the pathwise derivatives of a map $u\colon[t_0,T]\times C([0,T];\R^d)\rightarrow\R$ are given by the horizontal derivative $\partial_t^H u\colon[t_0,T]\times C([0,T];\R^d)\rightarrow\R$ and the vertical derivatives of first and second-order $\partial_x^V u\colon[t_0,T]\times C([0,T];\R^d)\rightarrow\R^d$ and $\partial_{xx}^V u\colon[t_0,T]\times C([0,T];\R^d)\rightarrow\R^{d\times d}$. We also refer to Definition \ref{D:C^1,2} (resp. Definition \ref{D:C_pol^1,2}) for the definition of the class $C^{1,2}([t_0,T]\times C([0,T];\R^d))$ (resp. $C_{\textup{pol}}^{1,2}([t_0,T]\times C([0,T];\R^d))$).
{The reason for which we consider $C^{1,2}([t_0,T]\times C([0,T];\R^d))$ rather than simply $C^{1,2}([0,T]\times C([0,T];\R^d))$ is due to the definition of viscosity solution adopted, for more details see Remark \ref{R:DefnViscosity}.} Finally, we recall that for a map $u\in C^{1,2}([t_0,T]\times C([0,T];\R^d))$ the so-called functional It\^o's formula holds, see Theorem \ref{T:Ito}.

\vspace{3mm}

\noindent Now, consider the following second-order path-dependent partial differential equation:
\begin{equation}\label{PPDE_general}
\hspace{-5mm}\begin{cases}
\vspace{2mm}
\partial_t^H u(t,x) = F\big(t,x,u(t,x),\partial^V_x u(t,x),\partial^V_{xx} u(t,x)\big), &(t,x)\in[0,T)\times C([0,T];\R^d), \\
u(T,x) = g(x), &\,x\in C([0,T];\R^d),
\end{cases}
\end{equation}
with $F\colon[0,T]\times C([0,T];\R^d)\times\R\times\R^d\times\mathcal S(d)\rightarrow\R$, where $\mathcal S(d)$ is the set of symmetric $d\times d$ matrices.

\begin{Definition}
We say that a function $u\colon[0,T]\times C([0,T];\R^d)\rightarrow\R$ is a \textbf{classical solution} of equation \eqref{PPDE_general} if it belongs to $C_{\textup{pol}}^{1,2}([0,T]\times C([0,T];\R^d))$ and satisfies \eqref{PPDE_general}.
\end{Definition}

\begin{Definition}\label{D:Visc}
We say that an upper semicontinuous function $u\colon[0,T]\times C([0,T];\R^d)\rightarrow\R$ is a (\textbf{path-dependent}) \textbf{viscosity subsolution} of equation \eqref{PPDE_general} if:
\begin{itemize}
\item $u(T,x)\leq g(x)$, for all $x\in C([0,T];\R^d)$;
\item for any $(t,x)\in[0,T)\times C([0,T];\R^d)$ and $\varphi\in C_{\textup{pol}}^{1,2}([t,T]\times C([0,T];\R^d))$, satisfying
\[
(u-\varphi)(t,x) \ = \ \sup_{(t',x')\in[t,T]\times C([0,T];\R^d)}(u-\varphi)(t',x'),
\]
with $(u-\varphi)(t,x)=0$, we have
\begin{equation}\label{Sub}
- \partial_t^H\varphi(t,x) + F\big(t,x,u(t,x),\partial^V_x\varphi(t,x),\partial^V_{xx}\varphi(t,x)\big) \ \leq \ 0.
\end{equation}
\end{itemize}

\noindent We say that a lower semicontinuous function $u\colon[0,T]\times C([0,T];\R^d)\rightarrow\R$ is a (\textbf{path-dependent}) \textbf{viscosity supersolution} of equation \eqref{PPDE_general} if:
\begin{itemize}
\item $u(T,x)\geq g(x)$, for all $x\in C([0,T];\R^d)$;
\item for any $(t,x)\in[0,T)\times C([0,T];\R^d)$ and $\varphi\in C_{\textup{pol}}^{1,2}([t,T]\times C([0,T];\R^d))$, satisfying
\[
(u-\varphi)(t,x) \ = \ \inf_{(t',x')\in[t,T]\times C([0,T];\R^d)}(u-\varphi)(t',x'),
\]
with $(u-\varphi)(t,x)=0$, we have
\begin{equation}\label{Super}
- \partial_t^H\varphi(t,x) + F\big(t,x,u(t,x),\partial^V_x\varphi(t,x),\partial^V_{xx}\varphi(t,x)\big) \ \geq \ 0.
\end{equation}
\end{itemize}

\noindent We say that a continuous map $u\colon[0,T]\times C([0,T];\R^d)\rightarrow\R$ is a (\textbf{path-dependent}) \textbf{viscosity solution} of equation \eqref{PPDE_general} if $u$ is both a (path-dependent) viscosity subsolution and a (path-dependent) viscosity supersolution of \eqref{PPDE_general}.
\end{Definition}

\begin{Remark}\label{R:DefnViscosity}
{Differently from the standard definition of viscosity solution usually adopted in the non-path-dependent case (see for instance \cite{CIL92}), notice that in Definition \ref{D:Visc} the maxima/minima are taken on $[t,T]\times C([0,T];\R^d)$ with the right-time interval $[t,T]$ in place of $[0,T]$ (i.e. the maxima/minima are ``one-sided'').\\
In the non-path-dependent case it is known that, even in infinite dimension, our ``one-sided'' definition is equivalent to the standard ``two-sided'' one (see e.g. \cite[Lemma 3.39]{fabbrigozziswiech}).
In addition, notice that the value function (say $v$) of our stochastic control problem is a viscosity solution of the HJB equation in both senses. As a matter of fact, the DPP, which is the main tool in order to prove the viscosity properties of the value function, only involves the values of $v=v(s,y)$ for $s\geq t$.

We observe that the fact of taking the maxima/minima on the right-time interval is generally adopted in the literature on viscosity solutions of path-dependent PDEs, as for instance in \cite{EKTZ,etzI,etzII,R16,rtz1,rtz3,CFGRT}, where the notion of viscosity solution introduced involves the maxima/minima of an expectation of future $($that is on $[t,T]$$)$ values of a suitable underlying process.\\
In our case, the reason for considering $[t,T]\times C([0,T];\R^d)$ rather than $[0,T]\times C([0,T];\R^d)$ is due to the proof of the comparison Theorem \ref{T:Comparison}. In particular, it is due to the gauge-type function implemented in that proof, which is introduced in Lemma \ref{L:SmoothGauge} and denoted by $\kappa_\infty$. More precisely, given a fixed point $(t,x)\in[0,T]\times C([0,T];\R^d)$, the map $(s,y)\mapsto\kappa_\infty((s,y),(t,x))$ is smooth only for $(s,y)\in[t,T]\times C([0,T];\R^d)$. However, if we would be able to find another gauge-type function $\Psi\colon([0,T]\times C([0,T];\R^d))^2\rightarrow[0,+\infty)$ such that, for every fixed $(t,x)$, the map $(s,y)\mapsto\Psi((s,y),(t,x))$ is smooth on the entire space $[0,T]\times C([0,T];\R^d)$, then the same proof of the comparison theorem would work for the more usual notion of viscosity solution where  maxima/minima are taken on $[0,T]\times C([0,T];\R^d)$.}
\end{Remark}

\subsection{The value function solves the path-dependent HJB equation}

Now, we focus on the path-dependent Hamilton-Jacobi-Bellman equation, namely on equation \eqref{PPDE_general} with
\begin{equation}\label{F_HJB}
F(t,x,r,p,M) \ = \ -\sup_{a\in A}\bigg\{\big\langle b(t,x,a),p\big\rangle + \frac{1}{2}\text{tr}\big[(\sigma\sigma\trans)(t,x,a)M\big] + f(t,x,a)\bigg\}.
\end{equation}
Therefore, equation \eqref{PPDE_general} becomes
\begin{equation}\label{HJB}
\begin{cases}
\vspace{2mm}
\partial_t^H u(t,x) + \sup_{a\in A}\bigg\{\big\langle b(t,x,a),\partial^V_x u(t,x)\big\rangle \\
\vspace{2mm}+\,\dfrac{1}{2}\text{tr}\big[(\sigma\sigma\trans)(t,x,a)\partial^V_{xx}u(t,x)\big]+f(t,x,a)\bigg\} = 0, \qquad &(t,x)\in[0,T)\times C([0,T];\R^d), \\
u(T,x) = g(x), &\,x\in C([0,T];\R^d).
\end{cases}
\end{equation}
We now prove that the value function $v$ is a viscosity solution to equation \eqref{HJB}.

\begin{Theorem}\label{T:Existence}
Suppose that Assumptions \ref{AssA} and \ref{AssB} hold. The value function $v$, defined by \eqref{Value}, is a viscosity solution to equation \eqref{HJB}.
\end{Theorem}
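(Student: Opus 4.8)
The plan is to derive the viscosity sub- and supersolution inequalities for $v$ by combining the dynamic programming principle (Theorem~\ref{T:DPP}) with the functional It\^o formula (Theorem~\ref{T:Ito}), following the classical pattern but adapting it to the path-dependent setting. As a preliminary, $v$ is continuous on $([0,T]\times C([0,T];\R^d),d_\infty)$ by Proposition~\ref{P:Value}, hence both upper and lower semicontinuous; and since the state equation \eqref{SDE} started at time $T$ has empty dynamics, $X^{T,x,\alpha}=x$ for every $\alpha\in\Ac$, so $v(T,x)=\sup_{\alpha}\E[g(X^{T,x,\alpha})]=g(x)$ and both terminal conditions in Definition~\ref{D:Visc} hold. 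Throughout I will write, for $\psi\in C^{1,2}_{\textup{pol}}$ and $a\in A$, $\mathcal L^a\psi(r,y):=\langle b(r,y,a),\partial^V_x\psi(r,y)\rangle+\tfrac12\textup{tr}[(\sigma\sigma\trans)(r,y,a)\partial^V_{xx}\psi(r,y)]$ and $\mathcal H\psi(r,y):=\partial^H_t\psi(r,y)+\sup_{a\in A}\{\mathcal L^a\psi(r,y)+f(r,y,a)\}$; by Assumptions~\ref{AssA}--\ref{AssB} (Lipschitz continuity and boundedness in $x$, uniform continuity in $t$, both uniform in $a$) together with the continuity of $\partial^H_t\psi,\partial^V_x\psi,\partial^V_{xx}\psi$, the map $(r,y)\mapsto\mathcal H\psi(r,y)$ is continuous with respect to $d_\infty$ and satisfies a polynomial growth bound $|\mathcal H\psi(r,y)|\le C(1+\|y\|_r^q)$.

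For the supersolution inequality, fix $(t_0,x_0)\in[0,T)\times C([0,T];\R^d)$ and $\varphi\in C^{1,2}_{\textup{pol}}([t_0,T]\times C([0,T];\R^d))$ realizing a global minimum $0$ of $v-\varphi$ on $[t_0,T]\times C([0,T];\R^d)$; I must show $\mathcal H\varphi(t_0,x_0)\le0$. I would fix $\eps>0$, choose $\hat a\in A$ with $\mathcal L^{\hat a}\varphi(t_0,x_0)+f(t_0,x_0,\hat a)\ge\sup_{a\in A}\{\mathcal L^a\varphi(t_0,x_0)+f(t_0,x_0,a)\}-\eps$, and run the constant control $\hat a$: for $s\in(t_0,T]$ and $X:=X^{t_0,x_0,\hat a}$, Theorem~\ref{T:DPP} and $v\ge\varphi$ on $[t_0,T]\times C([0,T];\R^d)$ give $\varphi(t_0,x_0)=v(t_0,x_0)\ge\E[\int_{t_0}^s f(r,X,\hat a)\,dr+\varphi(s,X)]$. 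Applying the functional It\^o formula to $r\mapsto\varphi(r,X)$ and taking expectation (the stochastic integral is a genuine martingale since $\partial^V_x\varphi$ has polynomial growth, $\sigma$ is bounded and $X$ has moments of all orders) turns this into $0\ge\E[\int_{t_0}^s(\partial^H_t\varphi(r,X)+\mathcal L^{\hat a}\varphi(r,X)+f(r,X,\hat a))\,dr]$; dividing by $s-t_0$, letting $s\downarrow t_0$, and using \eqref{ContinuityViscProof} together with continuity of $\partial^H_t\varphi$, $\mathcal L^{\hat a}\varphi$, $f(\cdot,\cdot,\hat a)$ and dominated convergence yields $0\ge\partial^H_t\varphi(t_0,x_0)+\mathcal L^{\hat a}\varphi(t_0,x_0)+f(t_0,x_0,\hat a)\ge\mathcal H\varphi(t_0,x_0)-\eps$. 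Then $\eps\downarrow0$.

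For the subsolution inequality, suppose instead that $\varphi\in C^{1,2}_{\textup{pol}}([t_0,T]\times C([0,T];\R^d))$ realizes a global maximum $0$ of $v-\varphi$ on $[t_0,T]\times C([0,T];\R^d)$ at $(t_0,x_0)$; I must show $\mathcal H\varphi(t_0,x_0)\ge0$. Here the plan is to use the $\eps$-optimal direction of the dynamic programming principle: for every $\eps>0$ and $s\in(t_0,T]$ pick $\alpha^{\eps,s}\in\Ac$ with $v(t_0,x_0)\le\E[\int_{t_0}^s f(r,X,\alpha^{\eps,s}_r)\,dr+v(s,X)]+\eps(s-t_0)$, where $X:=X^{t_0,x_0,\alpha^{\eps,s}}$. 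Using $v\le\varphi$ on $[t_0,T]\times C([0,T];\R^d)$, $v(t_0,x_0)=\varphi(t_0,x_0)$, the same It\^o computation, and the pointwise bound $f(r,X,\alpha^{\eps,s}_r)+\mathcal L^{\alpha^{\eps,s}_r}\varphi(r,X)\le\sup_{a\in A}\{f(r,X,a)+\mathcal L^a\varphi(r,X)\}$, one obtains $-\eps\le\frac{1}{s-t_0}\E[\int_{t_0}^s\mathcal H\varphi(r,X^{t_0,x_0,\alpha^{\eps,s}})\,dr]$. To let $s\downarrow t_0$ I would invoke the \emph{uniform-in-control} estimate \eqref{ContinuityViscProof}, which together with the continuity of $\mathcal H\varphi$ at $(t_0,x_0)$, its polynomial growth, uniform moment bounds on $X^{t_0,x_0,\alpha}$ and a Cauchy--Schwarz/Chebyshev argument produces a modulus $\omega$, independent of $\eps$, with $\frac{1}{s-t_0}\E[\int_{t_0}^s\mathcal H\varphi(r,X^{t_0,x_0,\alpha^{\eps,s}})\,dr]\le\mathcal H\varphi(t_0,x_0)+\omega(s-t_0)$; hence $-\eps\le\mathcal H\varphi(t_0,x_0)+\omega(s-t_0)$, and letting $s\downarrow t_0$ then $\eps\downarrow0$ concludes.

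The hard part is this last limit in the subsolution step. Because the problem is a maximization, only $\eps$-optimal controls are available, so $\alpha^{\eps,s}$ depends genuinely on both $\eps$ and $s$ and the control cannot be frozen before sending $s\downarrow t_0$; the ordinary short-time convergence of $X^{t_0,x_0,\alpha}$ to $x_0$ for a fixed $\alpha$ does not suffice. This is exactly where the \emph{uniform-in-$\alpha$} estimate \eqref{ContinuityViscProof} of Proposition~\ref{P:SDE} is essential, and where the ``uniform in $a\in A$'' hypotheses in Assumptions~\ref{AssA}--\ref{AssB} are needed, so that $(r,y)\mapsto\sup_{a\in A}\{f(r,y,a)+\mathcal L^a\varphi(r,y)\}$ is continuous and smallness of $d_\infty((r,X^{t_0,x_0,\alpha}_{\cdot\wedge r}),(t_0,x_0))$ transfers to smallness of $|\mathcal H\varphi(r,X^{t_0,x_0,\alpha})-\mathcal H\varphi(t_0,x_0)|$. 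Everything else --- moment estimates for $X^{t_0,x_0,\alpha}$, the true-martingale property of the It\^o stochastic integral, the passage to the limit via dominated convergence --- is routine. Note finally that the functional It\^o formula is applied on the right-time interval $[t_0,T]$, consistently with the class $C^{1,2}_{\textup{pol}}([t_0,T]\times C([0,T];\R^d))$ that enters Definition~\ref{D:Visc}.
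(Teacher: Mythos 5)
Your proposal is correct and follows essentially the same route as the paper: dynamic programming (Theorem \ref{T:DPP}) plus the functional It\^o formula, with constant controls for the supersolution inequality and near-optimal controls for the subsolution inequality, using the uniform-in-control estimate \eqref{ContinuityViscProof} together with Assumptions \ref{AssA}--\ref{AssB} to pass to the limit. The only differences are cosmetic (you keep the optimality tolerance $\eps$ and the horizon $s$ separate, whereas the paper couples them through $h$, and you pick an $\eps$-optimizer of the Hamiltonian where the paper argues for each fixed $a$ and then takes the supremum).
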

\begin{proof}[\textbf{Proof.}]
Recall from Proposition \ref{P:Value} that $v$ is continuous, moreover $v(T,\cdot)\equiv g(\cdot)$. Then it remains to prove both the subsolution and the supersolution property on $[0,T)\times C([0,T];\R^d)$.

\vspace{1mm}

\noindent\emph{Subsolution property.} Let $(t,x)\in[0,T)\times C([0,T];\R^d)$ and $\varphi\in C_{\textup{pol}}^{1,2}([t,T]\times C([0,T];\R^d))$ be such that
\[
(v-\varphi)(t,x) \ = \ \sup_{(t',x')\in[t,T]\times C([0,T];\R^d)}(v-\varphi)(t',x') \ = \ 0.
\]
From Theorem \ref{T:DPP} we know that, for every $h>0$ sufficiently small,
\[
0 \ = \ \sup_{\alpha\in\Ac}
\left\{
\E\bigg[\frac{1}{h}\int_t^{t+h}
f(r,X^{t,x,\alpha},\alpha_r)\,dr + \frac{1}{h}
\big(v(t+h, X^{t,x,\alpha}) - v(t,x)\big)\bigg]\right\}.
\]
Then, there exists $\alpha^h\in\Ac$ such that
\begin{align*}
- h \ &\leq \E \bigg[\frac{1}{h}\int_t^{t+h} f(r,X^{t,x,\alpha^h },\alpha^h _r)\,dr +
\frac{1}{h}\big(v(t+h, X^{t,x,\alpha^h }) - v(t,x)\big)\bigg] \\
&\leq \ \E\bigg[\frac{1}{h}\int_t^{t+h}
f(r, X^{t,x,\alpha^h },\alpha^h _r)\,dr
+ \frac{1}{h}\big(\varphi(t+h,X^{t,x,\alpha^h }) - \varphi(t,x)\big)\bigg],
\end{align*}
where the above inequality follows from the fact that $v(t,x)=\varphi(t,x)$ and $v\leq\varphi$ on $[t,T]\times C([0,T];\R^d)$. By the functional It\^o formula \eqref{Ito_formula}, we obtain
\begin{align*}
0 \ &\leq \ h + \frac{1}{h}\int_t^{t+h} \E\big[
\partial_t^H \varphi(r,X^{t,x,\alpha^h })\big]dr + \frac{1}{h}\int_t^{t+h} \E\big[\langle b(r,X^{t,x,\alpha^h },\alpha^h _r),
\partial_x^V \varphi(r,X^{t,x,\alpha^h })\rangle\big]dr \\
&+ \frac{1}{h}\int_t^{t+h}\frac{1}{2}\E
\big[\textup{tr}\big[(\sigma\sigma\trans)(r, X^{t,x,\alpha^h },\alpha^h _r) \partial_{xx}^V
\varphi(r, X^{t,x,\alpha^h })\big]\big]dr +
\frac{1}{h}\int_t^{t+h} \E\big[f(r, X^{t,x,\alpha^h })\big]dr.
\end{align*}
Recalling that $b$, $\sigma$, $f$ are uniformly continuous in their first two arguments, uniformly with respect to $a$, using \eqref{ContinuityViscProof}, we get
\begin{align*}
&\frac{1}{h}\int_t^{t+h} \E\big[
\partial_t^H \varphi(r,X^{t,x,\alpha^h })\big]dr + \frac{1}{h}\int_t^{t+h} \E\bigg[\langle
b(r,X^{t,x,\alpha^h },\alpha^h _r),\partial_x^V
\varphi(r,X^{t,x,\alpha^h })\rangle
\\
&\quad \ + \frac{1}{2}\textup{tr}\big[(\sigma\sigma\trans)
(r,X^{t,x,\alpha^h },\alpha^h _r)
\partial_{xx}^V \varphi(r,X^{t,x,\alpha^h })\big]
+ f(r,X^{t,x,\alpha^h },\alpha^h _r)\bigg]dr \\
&= \ \partial_t^H \varphi(t,x) + \frac{1}{h}\int_t^{t+h} \E\bigg[\langle b(t,x,\alpha^h _r),\partial_x^V \varphi(t,x)\rangle \\
&\quad \ + \frac{1}{2}\textup{tr}\big[(\sigma\sigma\trans)(t,x,\alpha^h _r) \partial_{xx}^V \varphi(t,x)\big] + f(t,x,\alpha^h _r)\bigg]dr + \rho(h),
\end{align*}
where $\rho(h)\rightarrow0$ as $h\rightarrow0^+$. Then, we obtain
\begin{align*}
&0  \leq h + \rho(h) + \partial_t^H \varphi(t,x) +\frac1h \int_{t}^{t+h} \sup_{a\in A}\Big\{\langle b(t,x,a),\partial_x^V \varphi(t,x)\rangle \\
&\quad \ + \frac{1}{2}\textup{tr}\big[(\sigma\sigma\trans)(t,x,a) \partial_{xx}^V \varphi(t,x)\big] + f(t,x,a)\Big\}dr .
\end{align*}
Sending $h\rightarrow0^+$, we conclude that \eqref{Sub} holds (with $F$ given by \eqref{F_HJB}).

\vspace{1mm}

\noindent\emph{Supersolution property.} Let $(t,x)\in[0,T)\times C([0,T];\R^d)$ and $\varphi\in C_{\textup{pol}}^{1,2}([t,T]\times C([0,T];\R^d))$ be such that
\[
(v-\varphi)(t,x) \ = \ \inf_{(t',x')\in[t,T]\times C([0,T];\R^d)}(v-\varphi)(t',x') \ = \ 0.
\]
From Theorem \ref{T:DPP} we have, for every $h>0$ sufficiently small, and for every constant control strategy $\alpha\equiv a\in A$,
\begin{align*}
0 \ &\geq \ \E\bigg[\frac{1}{h}\int_t^{t+h} f(r,X^{t,x,a},a)\,dr + \frac{1}{h}\big(v(t+h,X^{t,x,a}) - v(t,x)\big)\bigg] \\
&\geq \ \bar\E\bigg[\frac{1}{h}\int_t^{t+h} f(r,X^{t,x,a},a)\,dr + \frac{1}{h}\big(\varphi(t+h, X^{t,x,a}) - \varphi(t,x)\big)\bigg],
\end{align*}
where the above inequality follows from the fact that $v(t,x)=\varphi(t,x)$ and $v\geq\varphi$ on $[t,T]\times C([0,T];\R^d)$. Now, by the functional It\^o formula \eqref{Ito_formula}, we obtain
\begin{align*}
0 \ &\geq \ \frac{1}{h}\int_t^{t+h} \E\big[\partial_t^H
\varphi(r,X^{t,x,a})\big]dr + \frac{1}{h}\int_t^{t+h}
\E\big[\langle b(r,X^{t,x,a},a),\partial_x^V
\varphi(r, X^{t,x,a})\rangle\big]dr \\
&+ \frac{1}{h}\int_t^{t+h}\frac{1}{2}
\E\big[\textup{tr}\big[(\sigma\sigma\trans)
(r,X^{t,x,a},a) \partial_{xx}^V \varphi(r, X^{t,x,a})\big]\big]dr + \frac{1}{h}\int_t^{t+h}\E\big[f(r, X^{t,x,a},a)\big]dr.
\end{align*}
Letting $h\rightarrow0^+$, exploiting the regularity of $\varphi$ and the continuity of $b$, $\sigma$, $f$, we find
\begin{align*}
0 \ \geq \ \partial_t^H \varphi(t,x) + \big\langle b(t,x,a),\partial^V_x \varphi(t,x)\big\rangle + \frac{1}{2}\text{tr}\big[(\sigma\sigma\trans)(t,x,a)
\partial^V_{xx}\varphi(t,x)\big] + f(t,x,a).	
\end{align*}
From the arbitrariness of $a$, we conclude that \eqref{Super} holds (with $F$ given by \eqref{F_HJB}).
\end{proof}

\section{Uniqueness}
\label{S:Uniq}

\subsection{Smooth variational principle}

This section is devoted to state a smooth variational principle on $[0,T]\times C([0,T];\R^d)$ which will be an essential tool in the proof of the comparison theorem (Theorem \ref{T:Comparison}). Notice that such a smooth variational principle is obtained from \cite[Theorem 1]{LiShi} (see also \cite[Theorem 2.5.2]{BZ05}), which is a generalization of the Borwein-Preiss variant (\cite{BP87}) of Ekeland's variational principle (\cite{Ekeland}). More precisely, \cite[Theorem 1]{LiShi} extends Ekeland's principle to the concept of gauge-type function, that we now introduce.

\begin{Definition}\label{D:Gauge}
A map $\Psi\colon([0,T]\times C([0,T];\R^d))^2\rightarrow[0,+\infty]$ is called a \textbf{gauge-type function} if it satisfies the following properties.
\begin{enumerate}[\upshape a)]
\item $(t,x)\mapsto\Psi((t,x),(t_0,x_0))$ is lower semi-continuous on $[0,T]\times C([0,T];\R^d)$, for every fixed $(t_0,x_0)\in[0,T]\times C([0,T];\R^d)$.
\item $\Psi((t,x),(t,x))=0$, for every $(t,x)\in[0,T]\times C([0,T];\R^d)$.
\item For every $\eps>0$ there exists $\eta>0$ such that, for all $(t',x'),(t'',x'')\in[0,T]\times C([0,T];\R^d)$, the inequality $\Psi((t',x'),(t'',x''))\leq\eta$ implies $d_\infty((t',x'),(t'',x''))\leq\eps$.
\end{enumerate}
\end{Definition}

\noindent In the proof of the comparison theorem we need a gauge-type function $\Psi$ such that $(t,x)\mapsto\Psi((t,x),(t_0,x_0))$ is \emph{smooth} on $[t_0,T]\times C([0,T];\R^d)$, for every fixed $(t_0,x_0)$. Notice that $d_\infty$ is obviously a gauge-type function, however it is not smooth enough.

\begin{Lemma}\label{L:SmoothGauge}
Define the map $\kappa_\infty\colon([0,T]\times C([0,T];\R^d))^2\rightarrow[0,+\infty)$ as
\begin{align}
&\kappa_\infty\big((t,x),(t',x')\big) = \label{rho_infty} \\
&=
\begin{cases}
\vspace{2mm}\dfrac{\left(\|x_{\cdot\wedge t}-x_{\cdot\wedge t'}'\|_T^4-|x(t)-x'(t')|^4\right)^3}{\|x_{\cdot\wedge t}-x_{\cdot\wedge t'}'\|_T^8} + 3|x(t)-x'(t')|^4, \hspace{5mm} \|x_{\cdot\wedge t}-x_{\cdot\wedge t'}'\|_T\neq0, \\
0,\qquad \qquad \qquad
\hspace{7.9cm} \|x_{\cdot\wedge t}-x_{\cdot\wedge t'}'\|_T=0,
\end{cases}
\notag
\end{align}
for all $(t,x),(t',x')\in[0,T]\times C([0,T];\R^d)$. Then, $\kappa_\infty$ is continuous and satisfies the following inequalities:
\begin{equation}\label{Estimate_rho_infty}
\big\|x_{\cdot\wedge t}-x_{\cdot\wedge t'}'\big\|_T^4 \ \leq \ \kappa_\infty\big((t,x),(t',x')\big) \ \leq \ 3\big\|x_{\cdot\wedge t}-x_{\cdot\wedge t'}'\big\|_T^4.
\end{equation}
Moreover, for every fixed $(t',x')\in[0,T]\times C([0,T];\R^d)$, the map $[t',T]\times C([0,T];\R^d)\ni(t,x)\mapsto\kappa_\infty((t,x),(t',x'))$ belongs to $C^{1,2}([t',T]\times C([0,T];\R^d))$ and its horizontal derivative is identically equal to zero. Its vertical derivatives of first and second-order satisfy
\begin{align}
\big|\partial_{x_i}^V\kappa_\infty\big((t,x),(t',x')\big)\big| \ &\leq \ c\,\big\|x_{\cdot\wedge t}-x_{\cdot\wedge t'}'\big\|_T^3, \label{BoundFirstDeriv} \\
\big|\partial_{x_ix_j}^V\kappa_\infty\big((t,x),(t',x')\big)\big| \ &\leq \ c\,\big\|x_{\cdot\wedge t}-x_{\cdot\wedge t'}'\big\|_T^2, \label{BoundSecondDeriv}
\end{align}
for some constant $c>0$, for every $i,j=1,\ldots,d$. Finally, the following pseudo-triangle inequality holds:
\begin{equation}\label{TriangleIneq}
|t-t'|^4 + \kappa_\infty\big((t,x),(t',x')\big) \ \leq \ 3\cdot2^3\Big(|t-t_0|^4 + \kappa_\infty\big((t,x),(t_0,x_0)\big) + |t_0-t'|^4 +\kappa_\infty\big((t_0,x_0),(t',x')\big)\Big),
\end{equation}
for all $(t,x),(t',x'),(t_0,x_0)\in[0,T]\times C([0,T];\R^d)$.
\end{Lemma}
\begin{proof}[\textbf{Proof.}]
The claim follows from \cite[Lemma 3.1]{Zhou}, apart from \eqref{TriangleIneq}. More precisely, let $\Upsilon^{m,M}$ be the function defined at the beginning of \cite[Section 3]{Zhou}. Then, notice that $\kappa_\infty$ corresponds to $\Upsilon^{2,3}$. As a consequence, \eqref{Estimate_rho_infty} follows from inequalities (3.1) in \cite{Zhou}. In addition, the fact that, for every fixed $(t',x')\in[0,T]\times C([0,T];\R^d)$, the map $[t',T]\times C([0,T];\R^d)\ni(t,x)\mapsto\kappa_\infty((t,x),(t',x'))$ belongs to $C^{1,2}([t',T]\times C([0,T];\R^d))$ follows from \cite[Lemma 3.1]{Zhou}. Moreover, the fact that its horizontal derivative is identically equal to zero is proved at the beginning of the proof of \cite[Lemma 3.1]{Zhou}. Concerning estimate \eqref{BoundFirstDeriv}, this follows from the explicit expressions of the first-order vertical derivatives of $\kappa_\infty$ reported in (3.8) of \cite{Zhou}. Finally, estimate \eqref{BoundSecondDeriv} follows from the explicit expressions of the second-order vertical derivatives of $\kappa_\infty$ given in (3.14) of \cite{Zhou}.\\
Regarding \eqref{TriangleIneq}, by \eqref{Estimate_rho_infty} we have
\begin{align*}
&|t-t'|^4 + \kappa_\infty\big((t,x),(t',x')\big) \ \leq \ |t-t'|^4+3\big\|x(\cdot\wedge t)-x'(\cdot\wedge t')\big\|_T^4 \ \leq \ 3|t-t'|^4+3\big\|x(\cdot\wedge t)-x'(\cdot\wedge t')\big\|_T^4 \\
&\leq \ 3\big(|t-t_0|+|t'-t_0|\big)^4 + 3\Big(\big\|x(\cdot\wedge t)-x_0(\cdot\wedge t_0)\big\|_T+\big\|x'(\cdot\wedge t')-x_0(\cdot\wedge t_0)\big\|_T\Big)^4 \\
&\leq \ 3\cdot2^3\Big(|t-t_0|^4+|t'-t_0|^4\Big) + 3\cdot2^3\Big(\big\|x(\cdot\wedge t)-x_0(\cdot\wedge t_0)\big\|_T^4 + \big\|x'(\cdot\wedge t')-x_0(\cdot\wedge t_0)\big\|_T^4\Big) \\
&\leq \ 3\cdot2^3\Big(|t-t_0|^4+|t'-t_0|^4 + \kappa_\infty\big((t,x),(t_0,x_0)\big)+\kappa_\infty\big((t',x'),(t_0,x_0)\big)\Big).
\end{align*}
\end{proof}

\noindent Next result provides a gauge-type function with bounded derivatives, built starting from $\kappa_\infty$ in \eqref{rho_infty}.

\begin{Corollary}\label{C:SmoothGaugeBeta}
Let $\rho_\infty\colon([0,T]\times C([0,T];\R^d))^2\rightarrow[0,+\infty]$ be defined as
\[
\rho_\infty\big((t,x),(t',x')\big) \ = \
\begin{cases}
\vspace{2mm}\displaystyle|t-t'|^2 + \frac{\kappa_\infty\big((t,x),(t',x')\big)}{1 + \kappa_\infty\big((t,x),(t',x')\big)}, &\qquad t\geq t', \\
+\infty, &\qquad t<t',
\end{cases}
\]
for all $(t,x),(t',x')\in[0,T]\times C([0,T];\R^d)$. Then, $\rho_\infty$ is a gauge-type function. In addition, for every fixed $(t',x')\in[0,T]\times C([0,T];\R^d)$, the map $[t',T]\times C([0,T];\R^d)\ni(t,x)\mapsto\rho_\infty((t,x),(t',x'))$ belongs to $C^{1,2}([t',T]\times C([0,T];\R^d))$ and it has bounded derivatives.
\end{Corollary}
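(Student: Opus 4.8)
The plan is to rewrite $\rho_\infty$ in the form
\[
\rho_\infty\big((t,x),(t',x')\big) \ = \ |t-t'|^2 + \psi\big(\kappa_\infty((t,x),(t',x'))\big), \qquad t\ge t',
\]
where $\psi\colon[0,\infty)\to[0,1)$ is given by $\psi(s):=s/(1+s)$, and to deduce everything from Lemma \ref{L:SmoothGauge} together with the elementary facts that $\psi$ is $C^\infty$ and increasing on $[0,\infty)$, with $0\le\psi'(s)=(1+s)^{-2}\le(1+s)^{-1}$ and $|\psi''(s)|=2(1+s)^{-3}\le 2(1+s)^{-1}$, and with the inequality $1+r^2\ge 2r$ valid for $r\ge0$.

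First I would check the three properties of Definition \ref{D:Gauge}. Property b) is immediate: by \eqref{Estimate_rho_infty} one has $\kappa_\infty((t,x),(t,x))=0$, hence $\rho_\infty((t,x),(t,x))=0+\psi(0)=0$. For property a), fix $(t_0,x_0)$: on the open set $\{t>t_0\}$ the map $(t,x)\mapsto\rho_\infty((t,x),(t_0,x_0))$ is continuous, being the sum of $|t-t_0|^2$ and the composition of the continuous map $\kappa_\infty$ (Lemma \ref{L:SmoothGauge}) with the continuous function $\psi$; on $\{t<t_0\}$ it equals $+\infty$; and at a point $(t_0,x)$ lower semicontinuity holds because, along any sequence $(t_n,x_n)\to(t_0,x)$, the terms with $t_n\ge t_0$ converge to $\rho_\infty((t_0,x),(t_0,x_0))$ while those with $t_n<t_0$ equal $+\infty$. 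For property c), suppose $\rho_\infty((t,x),(t',x'))\le\eta$ with $0<\eta<1$; then $t\ge t'$, $|t-t'|\le\sqrt\eta$, and $\psi(\kappa_\infty((t,x),(t',x')))\le\eta$, which, since $\psi$ is increasing with $\psi^{-1}(u)=u/(1-u)$, gives $\kappa_\infty((t,x),(t',x'))\le\eta/(1-\eta)$ and hence $\|x_{\cdot\wedge t}-x'_{\cdot\wedge t'}\|_T\le\sqrt{\eta/(1-\eta)}$ by \eqref{Estimate_rho_infty}; therefore $d_\infty((t,x),(t',x'))\le\sqrt\eta+\sqrt{\eta/(1-\eta)}$, which can be made $\le\eps$ by taking $\eta$ small.

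For the regularity I would fix $(t',x')$ and work on $[t',T]\times C([0,T];\R^d)$. The map $(t,x)\mapsto|t-t'|^2$ belongs to $C^{1,2}$, with horizontal derivative $2(t-t')\in[0,2T]$ and vanishing vertical derivatives. By Lemma \ref{L:SmoothGauge}, $(t,x)\mapsto\kappa_\infty((t,x),(t',x'))$ belongs to $C^{1,2}([t',T]\times C([0,T];\R^d))$, is $[0,\infty)$-valued, and has identically vanishing horizontal derivative; since $\psi$ is $C^\infty$ on $[0,\infty)$, the chain rule for pathwise derivatives (a direct consequence of the definitions recalled in Appendix \ref{App:PathwiseDeriv}) gives $(t,x)\mapsto\psi(\kappa_\infty((t,x),(t',x')))\in C^{1,2}([t',T]\times C([0,T];\R^d))$ with
\[
\partial_t^H(\psi\circ\kappa_\infty)=\psi'(\kappa_\infty)\,\partial_t^H\kappa_\infty=0,\qquad
\partial_{x_i}^V(\psi\circ\kappa_\infty)=\psi'(\kappa_\infty)\,\partial_{x_i}^V\kappa_\infty,
\]
\[
\partial_{x_ix_j}^V(\psi\circ\kappa_\infty)=\psi''(\kappa_\infty)\,\partial_{x_i}^V\kappa_\infty\,\partial_{x_j}^V\kappa_\infty+\psi'(\kappa_\infty)\,\partial_{x_ix_j}^V\kappa_\infty.
\]
Writing $r:=\|x_{\cdot\wedge t}-x'_{\cdot\wedge t'}\|_T$, estimate \eqref{Estimate_rho_infty} gives $\kappa_\infty\ge r^2$, estimate \eqref{BoundFirstDeriv} gives $|\partial_{x_i}^V\kappa_\infty|\le cr$, and Lemma \ref{L:SmoothGauge} provides a constant $c'$ with $|\partial_{x_ix_j}^V\kappa_\infty|\le c'$; combining these with the bounds on $\psi',\psi''$ and with $1+r^2\ge 2r$ yields
\[
|\partial_{x_i}^V(\psi\circ\kappa_\infty)|\le\frac{cr}{1+r^2}\le\frac c2,\qquad
|\partial_{x_ix_j}^V(\psi\circ\kappa_\infty)|\le\frac{2c^2r^2}{1+r^2}+c'\le 2c^2+c'.
\]
Adding the two contributions shows that $(t,x)\mapsto\rho_\infty((t,x),(t',x'))$ belongs to $C^{1,2}([t',T]\times C([0,T];\R^d))$ and has horizontal and first- and second-order vertical derivatives bounded by constants independent of $(t',x')$.

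The only point requiring some care is the chain-rule step $\psi\circ\kappa_\infty\in C^{1,2}$ together with the displayed formulas for its pathwise derivatives; this follows directly from the definitions of the horizontal and vertical derivatives and the classical chain rule for $C^2$ scalar maps. It is precisely here that the normalization $\kappa_\infty\mapsto\kappa_\infty/(1+\kappa_\infty)$ pays off: the factor $(1+\kappa_\infty)^{-1}$ absorbs the linear growth in $r$ of $\partial_x^V\kappa_\infty$ (and the quadratic growth of the products $\partial_{x_i}^V\kappa_\infty\,\partial_{x_j}^V\kappa_\infty$), turning the merely locally bounded derivatives of $\kappa_\infty$ into globally bounded derivatives of $\rho_\infty$; all remaining estimates are elementary.
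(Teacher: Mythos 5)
Your proof is correct and follows essentially the same route as the paper: the paper's own (much terser) argument also deduces everything from Lemma \ref{L:SmoothGauge}, obtaining the gauge-type properties from the continuity of $\kappa_\infty$ and the bounds \eqref{Estimate_rho_infty}, and the $C^{1,2}$ regularity with bounded derivatives from composing $\kappa_\infty$ with $s\mapsto s/(1+s)$. Your write-up simply makes explicit the chain-rule computation and the way the factor $(1+\kappa_\infty)^{-1}$, together with $\kappa_\infty\ge\|x_{\cdot\wedge t}-x'_{\cdot\wedge t'}\|_T^2$ and \eqref{BoundFirstDeriv}, yields the uniform bounds the paper asserts.
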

\begin{proof}[\textbf{Proof.}]
The claim follows directly from Lemma \ref{L:SmoothGauge}. As a matter of fact, from the continuity of $\kappa_\infty$ we deduce that, for every fixed $(t_0,x_0)\in[0,T]\times C([0,T];\R^d)$, the map $(t,x)\mapsto\rho_\infty((t,x),(t_0,x_0))$ is lower semi-continuous on $[0,T]\times C([0,T];\R^d)$. This proves item a) of Definition \ref{D:Gauge}. Moreover, item b) is obvious, while item c) follows from inequalities \eqref{Estimate_rho_infty}. Finally, the fact that, for every fixed $(t',x')\in[0,T]\times C([0,T];\R^d)$, the map $[t',T]\times C([0,T];\R^d)\ni(t,x)\mapsto\rho_\infty((t,x),(t',x'))$ belongs to $C^{1,2}([t',T]\times C([0,T];\R^d))$ and it has bounded derivatives follows from the regularity of $\kappa_\infty$ and the estimates on its derivatives stated in Lemma \ref{L:SmoothGauge}.
\end{proof}

\noindent We can finally state the smooth variational principle on $[0,T]\times C([0,T];\R^d)$.

\begin{Theorem}\label{T:SmoothVP}
Let $\lambda>0$, $\delta>0$, and let $G\colon[0,T]\times C([0,T];\R^d)\rightarrow\R$ be an upper semicontinuous map, bounded from above. Let also $(t_0,x_0)\in[0,T]\times C([0,T];\R^d)$ satisfy
\[
G(t_0,x_0) \ \geq \ \sup_{(t,x)\in[0,T]\times C([0,T];\R^d)} G(t,x) - \lambda.
\]
Then, there exist $\{(t_i,x_i)\}_{i\geq1}\subset[0,T]\times C([0,T];\R^d)$ converging to some $(\bar t,\bar x)\in[0,T]\times C([0,T];\R^d)$ and $\varphi\colon[0,T]\times C([0,T];\R^d)\rightarrow[0,+\infty]$ given by
\begin{equation}\label{varphi}
\varphi(t,x) \ := \ \sum_{i=0}^{+\infty} \frac{\delta}{2^i} \rho_\infty\big((t,x),(t_i,x_i)\big), \qquad \forall\,(t,x)\in[0,T]\times C([0,T];\R^d),
\end{equation}
fulfilling the following properties.
\begin{enumerate}[\upshape i)]
\item $\rho_\infty((\bar t,\bar x),(t_i,x_i))\leq\lambda/(2^i\delta)$, for every $i\geq1$, and $\rho_\infty((\bar t,\bar x),(t_0,x_0))\leq\lambda/\delta$.
\item $G(t_0,x_0)\leq G(\bar t,\bar x)-\varphi(\bar t,\bar x)$.
\item For every $(t,x)\neq(\bar t,\bar x)$, $G(t,x) - \varphi(t,x)<G(\bar t,\bar x) - \varphi(\bar t,\bar x)$.
\item It holds that $t_0\leq\bar t$ and $t_i\leq\bar t$, for every $i\geq1$.
\end{enumerate}
In addition, the restriction of $\varphi$ to $[\bar t,T]\times C([0,T];\R^d)$ belongs to $C^{1,2}([\bar t,T]\times C([0,T];\R^d))$ and its derivatives are bounded by $c\delta$, for some constant $c\geq0$, independent of $\lambda,\delta$.
\end{Theorem}
\begin{proof}[\textbf{Proof.}]
Items i)-ii)-iii) follow from the variational principle \cite[Theorem 1]{LiShi}, which applies to a generic gauge-type function $\Psi$ (just observe that \cite[Theorem 1]{LiShi} is formulated on a complete \emph{metric} space, while $[0,T]\times C([0,T];\R^d)$ is a complete \emph{pseudometric} space; however, this does not affect the result). Notice that the quantities $\eps$ and $\delta_i$ appearing in \cite[Theorem 1]{LiShi} here are taken equal respectively to $\eps=\lambda$ and $\delta_i=\delta/2^i$, for $i\geq0$.

Concerning item iv), this is a consequence of the fact that we set $\rho_\infty((t,x),(t',x'))$ equal to $+\infty$ for $t<t'$. More precisely, item iv) can be deduced looking at the proof of \cite[Theorem 1]{LiShi} (see in particular formula (18) in \cite{LiShi} where $(t_1,x_1)$ is introduced and, more generally, formula (21) where $(t_i,x_i)$ is introduced), from which we get the inequalities $t_\delta\leq t_1\leq t_2\leq\cdots\leq t_i\leq\cdots\leq\bar t$.

Finally, the properties of $\varphi$ follows from the properties of $\rho_\infty$ stated in Corollary \ref{C:SmoothGaugeBeta} and from item iv).
\end{proof}

\subsection{Comparison theorem and uniqueness}

\begin{Theorem}\label{T:Comparison}
Suppose that Assumptions \ref{AssA}, \ref{AssB}, \ref{AssC} hold. Let $u_1,u_2\colon[0,T]\times C([0,T];\R^d)\rightarrow\R$ be such that
\begin{equation}\label{u_1_u_2}
|u_i(t,x) - u_i(t',x')| \ \leq \ C_i\big(|t-t'|^{q_i} + \|x(\cdot\wedge t) - x'(\cdot\wedge t')\|_T^{p_i}\big),
\end{equation}
for every $i=1,2$, $(t,x),(t',x')\in[0,T]\times C([0,T];\R^d)$, for some constants $C_i\geq0$, $q_i,p_i\in(0,1]$. Suppose that $u_1$ $($resp. $u_2$$)$ is a (path-dependent) viscosity subsolution $($resp. supersolution$)$ of equation \eqref{HJB}. Then $u_1\leq u_2$ on $[0,T]\times C([0,T];\R^d)$.
\end{Theorem}
\begin{proof}[\textbf{Proof.}]
The proof consists in showing that $u_1\leq v$ and $v\leq u_2$ on $[0,T]\times C([0,T];\R^d)$, with $v$ given by \eqref{Value}.

\vspace{1mm}

\noindent\textsc{Step I.} \emph{Proof of $u_1\leq v$.} We proceed by contradiction and assume that $\sup(u_1-v)>0$. Then, there exists $(t_0,x_0)\in[0,T]\times C([0,T];\R^d)$ such that
\begin{equation}\label{Contradiction}
(u_1 - v)(t_0,x_0) \ > \ 0.
\end{equation}
Notice that $t_0<T$, since $u_1(T,x_0)\leq g(x_0)=v(T,x_0)$. Now, consider the sequences $\{b_n\}_n$, $\{f_n\}_n$, $\{g_n\}_n$ in \eqref{Coeff_n}. Moreover, for every $n$ and any $\eps\in(0,1)$, consider the functions $v_{n,\eps}\in C_{\textup{pol}}^{1,2}([0,T]\times C([0,T];\R^d))$ and $\bar v_{n,\eps}\in C^{1,2}([0,T]\times\R^d)$ introduced in Theorem \ref{T:CylindrApprox1}, with $v_{n,\eps}$ classical solution of the following equation:
\[
\begin{cases}
\vspace{2mm}
\partial_t^H v_{n,\eps}(t,x) + \dfrac{1}{2}\eps^2\textup{tr}\big[\partial_{yy}\bar v_{n,\eps}(t,y_n^{t,x})\big] + \sup_{a\in A}\bigg\{\big\langle b_n(t,x,a),\partial^V_x v_{n,\eps}(t,x)\big\rangle \\
\vspace{2mm}+\,\dfrac{1}{2}\textup{tr}\big[(\sigma\sigma\trans)(t,x,a)\partial^V_{xx}v_{n,\eps}(t,x)\big]+f_n(t,x,a)\bigg\} = 0, &\hspace{-2.4cm}(t,x)\in[0,T)\times C([0,T];\R^d), \\
v_{n,\eps}(T,x) = g_n(x), &\,\hspace{-2.4cm}x\in C([0,T];\R^d),
\end{cases}
\]
where $y_n^{t,x}$ is given by \eqref{y_n^t,x}. Notice that the term $\frac{1}{2}\eps^2\textup{tr}[\partial_{yy}\bar v_{n,\eps}(t,y_n^{t,x})]$ depends on the function $\bar v_{n,\eps}$ rather than on $v_{n,\eps}$, see Remark \ref{R:v_eps}.\\
We split the rest of the proof of \textsc{Step I} into four substeps.

\vspace{4mm}

\noindent\textsc{Substep I-a}. Given $k\in\N$, we set $\tilde u_1(t,x):=\text{e}^{t-t_0}\,u_1(t,x)$, for all $(t,x)\in[0,T]\times C([0,T];\R^d)$, and we define similarly $\tilde v_{n,\eps}$, $\tilde f$, $\tilde f_n$. We also define $\tilde g(x):=\text{e}^{T-t_0}\,g(x)$ and $\tilde g_n(x):=\text{e}^{T-t_0}\,g_n(x)$, for all $x\in C([0,T];\R^d)$. Notice that $\tilde u_1$ is a (path-dependent) viscosity subsolution of the following path-dependent partial differential equation:
\begin{equation}\label{HJB_exp}
\begin{cases}
\vspace{2mm}
\partial_t^H \tilde u_1(t,x) + \sup_{a\in A}\bigg\{\big\langle b(t,x,a),\partial^V_x \tilde u_1(t,x)\big\rangle + \dfrac{1}{2}\text{tr}\big[(\sigma\sigma\trans)(t,x,a)\partial^V_{xx}\tilde u_1(t,x)\big] \\
\vspace{2mm}+\,\tilde f(t,x,a)\bigg\} = \tilde u_1(t,x), &\hspace{-4.5cm}(t,x)\in[0,T)\times C([0,T];\R^d), \\
\tilde u_1(T,x) = \tilde g(x), &\hspace{-4.5cm}x\in C([0,T];\R^d).
\end{cases}
\end{equation}
Similarly, $\tilde v_{n,\eps}\in C_{\textup{pol}}^{1,2}([0,T]\times C([0,T];\R^d))$ and is a classical solution of the following equation:
\begin{equation}\label{HJB_exp_n,eps}
\hspace{-3cm}\begin{cases}
\vspace{2mm}
\partial_t^H \tilde v_{n,\eps}(t,x) + \dfrac{1}{2}\eps^2\text{e}^{t-t_0}\textup{tr}\big[\partial_{yy}\bar v_{n,\eps}(t,y_n^{t,x})\big] + \sup_{a\in A}\bigg\{\big\langle b_n(t,x,a),\partial^V_x \tilde v_{n,\eps}(t,x)\big\rangle \\
\vspace{2mm}+\,\dfrac{1}{2}\textup{tr}\big[(\sigma\sigma\trans)(t,x,a)\partial^V_{xx}\tilde v_{n,\eps}(t,x)\big] \\
\vspace{2mm}+\,\tilde f_n(t,x,a)\bigg\} = \tilde v_{n,\eps}(t,x), &\hspace{-7cm}(t,x)\in[0,T)\times C([0,T];\R^d), \\
\tilde v_{n,\eps}(T,x) = \tilde g_n(x), &\,\hspace{-7cm}x\in C([0,T];\R^d).
\end{cases}
\end{equation}

\vspace{4mm}

\noindent\textsc{Substep I-b}. We begin noting that by \eqref{u_1_u_2} and item 5) of Theorem \ref{T:CylindrApprox1}, there exist constants $M'\geq0$ and $p\in(0,1/2]$, independent of $n,\eps$, such that
\[
\big|\big(\tilde u_1-\tilde v_{n,\eps}\big)(t,x) - \big(\tilde u_1-\tilde v_{n,\eps}\big)(t',x')\big| \ \leq \ \frac{M'}{2}\big(|t-t'|^p + \|x(\cdot\wedge t) - x'(\cdot\wedge t')\|_T^p\big),
\]
for all $(t,x),(t',x')\in[0,T]\times C([0,T];\R^d)$, with $|t-t'|\leq1$ and $\|x(\cdot\wedge t)-x'(\cdot\wedge t')\|_T\leq1$. By \eqref{Estimate_rho_infty}, we find
\begin{align*}
\big|\big(\tilde u_1-\tilde v_{n,\eps}\big)(t,x) - \big(\tilde u_1-\tilde v_{n,\eps}\big)(t',x')\big| \ &\leq \ \frac{M'}{2}\big(|t-t'|^p + \|x(\cdot\wedge t) - x'(\cdot\wedge t')\|_T^p\big) \\
&= \ \frac{M'}{2}\Big(|t-t'|^p + \|x(\cdot\wedge t) - x'(\cdot\wedge t')\|_T^{4\frac{p}{4}}\Big) \\
&\leq \ \frac{M'}{2}\Big(|t-t'|^p + \kappa_\infty\big((t,x),(t',x')\big)^{\frac{p}{4}}\Big).
\end{align*}
Then, by the concavity of the map $y\mapsto y^{\frac{p}{4}}$, we obtain
\begin{align}\label{UnifCont_u-v_M'}
\big|\big(\tilde u_1-\tilde v_{n,\eps}\big)(t,x) - \big(\tilde u_1-\tilde v_{n,\eps}\big)(t',x')\big| \ &\leq \ \frac{M'}{2}2^{1-\frac{p}{4}}\Big(|t-t'|^4 + \kappa_\infty\big((t,x),(t',x')\big)\Big)^{\frac{p}{4}} \notag \\
&\leq \ M'\Big(|t-t'|^4 + \kappa_\infty\big((t,x),(t',x')\big)\Big)^{\frac{p}{4}},
\end{align}
for all $(t,x),(t',x')\in[0,T]\times C([0,T];\R^d)$, with $|t-t'|\leq1$ and $\|x(\cdot\wedge t)-x'(\cdot\wedge t')\|_T\leq1$. Let $M>0$ be such that
\begin{equation}\label{M_Bound}
M \ > \ 2^{2+p}3^{\frac{p}{4}}M'.
\end{equation}
Since in particular $M>M'$, from \eqref{UnifCont_u-v_M'} we deduce that
\begin{equation}\label{UnifCont_u-v}
\big|\big(\tilde u_1-\tilde v_{n,\eps}\big)(t,x) - \big(\tilde u_1-\tilde v_{n,\eps}\big)(t',x')\big| \ \leq \ M\Big(|t-t'|^4 + \kappa_\infty\big((t,x),(t',x')\big)\Big)^{\frac{p}{4}}.
\end{equation}
Now, let
\begin{equation}\label{gamma}
\gamma \ = \ \frac{(M/M')^{\frac{4}{p}}}{3\cdot2^{3+\frac{8}{p}}}-1.
\end{equation}
Notice that by \eqref{M_Bound} it follows that $\gamma$ is strictly positive (in fact $\gamma>1$). Moreover, let $\bar r\in(0,1)$ be such that
\begin{equation}\label{bar_r}
\bar r^4 \ = \ \frac{1}{3\cdot2^3(1+\gamma)}.
\end{equation}
Now, for each $r\in(0,\bar r)$, let $C_r>0$ be such that the map $y\mapsto r^\frac{4}{y}-\frac{r^p}{\sqrt{y}}$ is strictly positive for $y\geq C_r$. Then, given $r\in(0,\bar r)$ and $k\geq C_r$, we construct a map $G_{n,\eps,k,r}\colon[0,T]\times C([0,T];\R^d)\rightarrow\R$ of the form
\begin{align*}
G_{n,\eps,k,r}(t,x) \ &= \ \big(\tilde u_1-\tilde v_{n,\eps}\big)(t,x) - h_{k,r}\Big(|t-t_0|^4 + \kappa_\infty\big((t,x),(t_0,x_0)\big)\Big) \\
&\quad \ - \psi_{k,r}\Big(|t-t_0|^4 + \kappa_\infty\big((t,x),(t_0,x_0)\big)\Big),
\end{align*}
for some smooth functions $h_{k,r}$ and $\psi_{k,r}$, such that $G_{n,\eps,k,r}$ satisfies the following properties:
\begin{enumerate}[1)]
\item $G_{n,\eps,k,r}(t_0,x_0)=(\tilde u_1-\tilde v_{n,\eps})(t_0,x_0)$;
\item $G_{n,\eps,k,r}(t,x)\leq(\tilde u_1-\tilde v_{n,\eps})(t_0,x_0) + Mr^p$, for $|t-t_0|^4+\kappa_\infty((t,x),(t_0,x_0))\leq r^4$;
\item $G_{n,\eps,k,r}(t,x)\leq(\tilde u_1-\tilde v_{n,\eps})(t_0,x_0) - \frac{1}{2}Mr^p$, for $|t-t_0|^4+\kappa_\infty((t,x),(t_0,x_0))= r^4$;
\item $G_{n,\eps,k,r}(t,x)\leq(\tilde u_1-\tilde v_{n,\eps})(t_0,x_0) - \frac{1}{4}Mr^p$, for $|t-t_0|^4+\kappa_\infty((t,x),(t_0,x_0))>r^4$;
\item the horizontal and vertical derivatives of first and second-order of the map $[t_0,T]\times C([0,T];\R^d)\ni(t,x)\mapsto h_{k,r}(|t-t_0|^4+\kappa_\infty((t,x),(t_0,x_0)))$ are bounded by $\frac{D_r}{\sqrt{k}}$ on the set $\{|t-t_0|^4+\kappa_\infty((t,x),(t_0,x_0))\leq r^4\}$, for some constant $D_r>0$, possibly depending on $r$, but independent of $n,\eps,k$.
\end{enumerate}
Notice that from properties 2) and 4) it follows that
\begin{equation}\label{G_Propriety4}
\sup_{(t,x)\in[0,T]\times C([0,T];\R^d)} G_{n,\eps,k,r}(t,x) \ \leq \ \big(\tilde u_1-\tilde v_{n,\eps}\big)(t_0,x_0) + Mr^p.
\end{equation}
In order to construct $G_{n,\eps,k,r}$, we consider a function $\psi_{k,r}\colon[0,\infty)\rightarrow[0,\infty)$, such that:
\begin{enumerate}[a)]
\item $\psi_{k,r}\in C^2([0,\infty))$ and has polynomial growth together with its derivatives of first and second-order;
\item $\psi_{k,r}(s)=0$ for $0\leq s\leq r^4$;
\item for $s\geq\gamma r^4$, with $\gamma>1$ as in \eqref{gamma}, $\psi_{k,r}$ satisfies
\begin{equation}\label{property_c}
\psi_{k,r}(s) \ \geq \ 2C'\sum_{i=1}^4\big(3\cdot2^3s + 3\cdot2^3r^4\big)^{\frac{\beta_i}{4}} - \frac{1}{4}Mr^p,
\end{equation}
where $C'\geq0$ is as in \eqref{C'} and $\beta_1:=q_1$, $\beta_2:=p_1$, $\beta_3:=1/2$, $\beta_4:=1$.
\end{enumerate}
Then, we define $G_{n,\eps,k,r}$ as follows:
\begin{align*}
G_{n,\eps,k,r}(t,x) \ &:= \ \big(\tilde u_1-\tilde v_{n,\eps}\big)(t,x) - 3M\sqrt{k}\bigg\{\bigg(\frac{1}{\ell_{k,r}}+|t-t_0|^4+\kappa_\infty\big((t,x),(t_0,x_0)\big)\bigg)^{\frac{1}{k}} - \frac{1}{\ell_{k,r}^{\frac{1}{k}}}\bigg\} \\
&\quad \ - \psi_{k,r}\Big(|t-t_0|^4 + \kappa_\infty\big((t,x),(t_0,x_0)\big)\Big),
\end{align*}
for every $(t,x)\in[0,T]\times C([0,T];\R^d)$, with
\begin{equation}\label{ell}
\ell_{k,r} \ = \ \Bigg(\frac{1}{r^{\frac{4}{k}}-\frac{r^p}{\sqrt{k}}}\Bigg)^k.
\end{equation}
Since we are assuming $k\geq C_r$, then $r^\frac{4}{k}-\frac{r^{p}}{\sqrt{k}}>0$.\\
We begin noting that
\begin{align}\label{G_Property5}
G_{n,\eps,k,r}(t,x) \ &= \ \big(\tilde u_1-\tilde v_{n,\eps}\big)(t,x) \\
&\quad \ - 3M\sqrt{k}\bigg\{\bigg(\frac{1}{\ell_{k,r}}+|t-t_0|^4+\kappa_\infty\big((t,x),(t_0,x_0)\big)\bigg)^{\frac{1}{k}} - \frac{1}{\ell_{k,r}^{\frac{1}{k}}}\bigg\}, \notag
\end{align}
for $|t-t_0|^4+\kappa_\infty((t,x),(t_0,x_0))\leq r^4$. As a matter of fact, if $|t-t_0|^4+\kappa_\infty((t,x),(t_0,x_0))\leq r^4$ then $\psi_{k,r}(|t-t_0|^4+\kappa_\infty((t,x),(t_0,x_0)))$ is equal to zero by property b) of $\psi_{k,r}$.

\vspace{2mm}

\noindent\emph{Property 1).} Notice that property 1) above holds true:
\[
G_{n,\eps,k,r}(t_0,x_0) \ = \ \big(\tilde u_1-\tilde v_{n,\eps}\big)(t_0,x_0).
\]

\vspace{2mm}

\noindent\emph{Property 2).} This property follows from \eqref{UnifCont_u-v}, the fact that $h_{k,r}\geq0$ and $\psi_{k,r}\geq0$:
\begin{align*}
G_{n,\eps,k,r}(t,x) - \big(\tilde u_1-\tilde v_{n,\eps}\big)(t_0,x_0) \ &\leq \ \big(\tilde u_1-\tilde v_{n,\eps}\big)(t,x) - \big(\tilde u_1-\tilde v_{n,\eps}\big)(t_0,x_0) \\
&\leq \ M\Big(|t-t_0|^4 + \kappa_\infty\big((t,x),(t_0,x_0)\big)\Big)^{\frac{p}{4}} \ \leq \ Mr^p.
\end{align*}

\vspace{2mm}

\noindent\emph{Property 3).} Regarding property 3), by \eqref{UnifCont_u-v} and \eqref{G_Property5}, for all $(t,x)\in[0,T]\times C([0,T];\R^d)$, with $|t-t_0|^4+\kappa_\infty((t,x),(t_0,x_0))=r^4$,
\begin{align*}
&G_{n,\eps,k,r}(t,x) \ = \ \big(\tilde u_1-\tilde v_{n,\eps}\big)(t,x) - \big(\tilde u_1-\tilde v_{n,\eps}\big)(t_0,x_0) + \big(\tilde u_1-\tilde v_{n,\eps}\big)(t_0,x_0) \\
&- 3M\sqrt{k}\bigg\{\bigg(\frac{1}{\ell_{k,r}}+|t-t_0|^4+\kappa_\infty\big((t,x),(t_0,x_0)\big)\bigg)^{\frac{1}{k}} - \frac{1}{\ell_{k,r}^{\frac{1}{k}}}\bigg\} \\
&\leq \ M\Big(|t-t_0|^4 + \kappa_\infty\big((t,x),(t_0,x_0)\big)\Big)^{\frac{p}{4}} + \big(\tilde u_1-\tilde v_{n,\eps}\big)(t_0,x_0) - 3M\sqrt{k}\bigg\{\bigg(\frac{1}{\ell_{k,r}}+r^4\bigg)^{\frac{1}{k}} - \frac{1}{\ell_{k,r}^{\frac{1}{k}}}\bigg\} \\
&= \ Mr^p + \big(\tilde u_1-\tilde v_{n,\eps}\big)(t_0,x_0) - 3M\sqrt{k}\bigg\{\bigg(\frac{1}{\ell_{k,r}}+r^4\bigg)^{\frac{1}{k}} - \frac{1}{\ell_{k,r}^{\frac{1}{k}}}\bigg\}.
\end{align*}
Now, notice that
\begin{equation}\label{Property3_Proof}
Mr^p - 3M\sqrt{k}\bigg\{\bigg(\frac{1}{\ell_{k,r}}+r^4\bigg)^{\frac{1}{k}} - \frac{1}{\ell_{k,r}^{\frac{1}{k}}}\bigg\} \ \leq \ - \frac{1}{2}Mr^p,
\end{equation}
from which property 3) follows. As a matter of fact, \eqref{Property3_Proof} can be written as
\[
\frac{3M}{2}r^p \ \leq \ 3M\sqrt{k}\bigg\{\bigg(\frac{1}{\ell_{k,r}}+r^4\bigg)^{\frac{1}{k}} - \frac{1}{\ell_{k,r}^{\frac{1}{k}}}\bigg\},
\]
which becomes
\begin{equation}\label{Property3_Proof_2}
r^p + \frac{2\sqrt{k}}{\ell_{k,r}^{\frac{1}{k}}} \ \leq \ 2\sqrt{k}\bigg(\frac{1}{\ell_{k,r}}+r^4\bigg)^{\frac{1}{k}}.
\end{equation}
On the other hand, using the concavity of the map $y\mapsto y^{1/k}$, we have
\[
2\bigg(\frac{1}{\ell_{k,r}}+r^4\bigg)^{\frac{1}{k}} \ \geq \ 2^{\frac{1}{k}}\bigg(\frac{1}{\ell_{k,r}^{\frac{1}{k}}}+r^{\frac{4}{k}}\bigg) \ \geq \ \frac{1}{\ell_{k,r}^{\frac{1}{k}}}+r^{\frac{4}{k}},
\]
where the last inequality follows from the fact that $2^{1/k}\geq1$. Therefore, \eqref{Property3_Proof_2} holds true if
\[
r^p + \frac{2\sqrt{k}}{\ell_{k,r}^{\frac{1}{k}}} \ \leq \ \sqrt{k}\bigg(\frac{1}{\ell_{k,r}^{\frac{1}{k}}} + r^{\frac{4}{k}}\bigg).
\]
This yields
\[
r^{\frac{4}{k}} - \frac{r^p}{\sqrt{k}} \ \geq \ \frac{1}{\ell_{k,r}^{\frac{1}{k}}},
\]
which is indeed an equality due to the expression of $\ell_{k,r}$ in \eqref{ell}.

\vspace{2mm}

\noindent\emph{Property 4).} Regarding property 4), we have, for every $|t-t_0|^4+\kappa_\infty((t,x),(t_0,x_0))>r^4$,
\begin{align}\label{G_Property4}
G_{n,\eps,k,r}(t,x) \ &= \ G_{n,\eps,k,r}(t,x) - G_{n,\eps,k,r}(t',x') + G_{n,\eps,k,r}(t',x') \notag \\
&= \ \big(\tilde u_1-\tilde v_{n,\eps}\big)(t,x) - \big(\tilde u_1-\tilde v_{n,\eps}\big)(t',x') + G_{n,\eps,k,r}(t',x') \notag \\
&\quad \ - 3M\sqrt{k}\bigg\{\bigg(\frac{1}{\ell_{k,r}}+|t-t_0|^4+\kappa_\infty\big((t,x),(t_0,x_0)\big)\bigg)^{\frac{1}{k}} - \frac{1}{\ell_{k,r}^{\frac{1}{k}}}\bigg\} \notag \\
&\quad \ + 3M\sqrt{k}\bigg\{\bigg(\frac{1}{\ell_{k,r}}+|t'-t_0|^4+\kappa_\infty\big((t',x'),(t_0,x_0)\big)\bigg)^{\frac{1}{k}} - \frac{1}{\ell_{k,r}^{\frac{1}{k}}}\bigg\} \notag \\
&\quad \ - \psi_{k,r}\Big(|t-t_0|^4 + \kappa_\infty\big((t,x),(t_0,x_0)\big)\Big) \notag \\
&\leq \ \big(\tilde u_1-\tilde v_{n,\eps}\big)(t,x) - \big(\tilde u_1-\tilde v_{n,\eps}\big)(t',x') + G_{n,\eps,k,r}(t',x') \\
&\quad \ - \psi_{k,r}\Big(|t-t_0|^4 + \kappa_\infty\big((t,x),(t_0,x_0)\big)\Big), \notag
\end{align}
for any $|t'-t_0|^4+\kappa_\infty((t',x'),(t_0,x_0))=r^4$, where we used \eqref{G_Property5}, namely that $\psi_{k,r}$ does not appear in the expression of $G_{n,\eps,k,r}(t',x')$, and also that the map
\[
s \ \longmapsto \ 3M\sqrt{k}\bigg\{\bigg(\frac{1}{\ell_{k,r}}+s\bigg)^{\frac{1}{k}} - \frac{1}{\ell_{k,r}^{\frac{1}{k}}}\bigg\}
\]
is non-decreasing.\\
Now, we prove that the set $A:=\{(t,x)\colon|t-t_0|^4+\kappa_\infty((t,x),(t_0,x_0))>r^4\}$ can be written as $A_1\cup A_2$, with $A_1$ and $A_2$ disjoint sets, where:
\begin{itemize}
\item $A_1$ is the set of points $(t,x)\in A$ for which there exists $(t',x')$ (with $|t'-t_0|^4+\kappa_\infty((t',x'),(t_0,x_0))=r^4$) satisfying $|t-t'|\leq1$, $\|x(\cdot\wedge t)-x'(\cdot\wedge t')\|_T\leq1$, $|t-t'|^4 + \kappa_\infty((t,x),(t',x'))\leq\big(\frac{1}{4}r^pM/M'\big)^{4/p}$;
\item $A_2$ is the set of points $(t,x)\in A$ such that
\begin{equation}\label{(t,x)(t',x')}
|t-t_0|^4 + \kappa_\infty\big((t,x),(t_0,x_0)\big) \ > \ \gamma r^4,
\end{equation}
with $\gamma>1$ given by \eqref{gamma}.
\end{itemize}
Let us prove that $A_1$, $A_2$ form a partition of $A$. To this end, consider a point $(t,x)\in A\backslash A_1$ and suppose that $|t-t'|^4 + \kappa_\infty((t,x),(t',x'))>\big(\frac{1}{4}r^pM/M'\big)^{4/p}$ for any $(t',x')$ with $|t'-t_0|^4+\kappa_\infty((t',x'),(t_0,x_0))=r^4$. By \eqref{TriangleIneq}, we have (recalling the definition of $\gamma$ in \eqref{gamma})
\[
|t-t_0|^4 + \kappa_\infty\big((t,x),(t_0,x_0)\big) \ > \ \frac{1}{3\cdot2^3}\bigg(\frac{1}{4}\frac{M}{M'}r^p\bigg)^{\frac{4}{p}} - r^4 \ = \ \bigg(\frac{(M/M')^{\frac{4}{p}}}{3\cdot2^{3+\frac{8}{p}}} - 1\bigg)r^4 \ = \ \gamma r^4.
\]
This shows that $(t,x)\in A_2$. Now, suppose on the other hand that $|t-t'|>1$ (in the case $\|x(\cdot\wedge t)-x'(\cdot\wedge t')\|_T>1$ we proceed along the same lines). Then $|t-t'|^4 + \kappa_\infty((t,x),(t',x'))>1$. Therefore, by \eqref{TriangleIneq}, we get
\[
|t-t_0|^4 + \kappa_\infty\big((t,x),(t_0,x_0) \ > \ \frac{1}{3\cdot2^3} - r^4 \ > \ \gamma r^4,
\]
which shows that $(t,x)\in A_2$, where the last inequality follows from the fact that $r\in(0,\bar r)$ and the definition of $\bar r$ in \eqref{bar_r}.\\
For the rest of the proof of \emph{Property 4)} we distinguish two cases depending on the point $(t,x)$. 

\vspace{2mm}

\noindent\emph{Case 1: $(t,x)\in A_1$.} By \eqref{UnifCont_u-v_M'} and property 3) of $G_{n,\eps,k,r}$, from \eqref{G_Property4} we find
\begin{align}\label{Case1}
G_{n,\eps,k,r}(t,x) \ &\leq \ M'\Big(|t-t'|^4 + \kappa_\infty\big((t,x),(t',x')\big)\Big)^{\frac{p}{4}} + \big(\tilde u_1-\tilde v_{n,\eps}\big)(t_0,x_0) \\
&\quad \ - \frac{1}{2}Mr^p - \psi_{k,r}\Big(|t-t_0|^4 + \kappa_\infty\big((t,x),(t_0,x_0)\big)\Big). \notag
\end{align} Then, by \eqref{Case1}, recalling that $\psi_{k,r}\geq0$,
\[
G_{n,\eps,k,r}(t,x) \ \leq \ \big(\tilde u_1-\tilde v_{n,\eps}\big)(t_0,x_0) - \frac{1}{4}Mr^p,
\]
which proves property 4) for such $(t,x)$.

\vspace{2mm}

\noindent\emph{Case 2: $(t,x)\in A_2$.} We begin noting that by \eqref{u_1_u_2} and item 5) of Theorem \ref{T:CylindrApprox1}, there exists a constant $C'\geq0$, independent of $n,\eps$, such that
\begin{align}\label{C'}
&\big|\big(\tilde u_1-\tilde v_{n,\eps}\big)(t,x) - \big(\tilde u_1-\tilde v_{n,\eps}\big)(t',x')\big| \notag \\
&\leq \ C'\big(|t-t'|^{q_1} + \|x(\cdot\wedge t) - x'(\cdot\wedge t')\|_T^{p_1} + |t-t'|^{1/2} + \|x(\cdot\wedge t) - x'(\cdot\wedge t')\|_T\big) \notag \\
&\leq \ C'\sum_{i=1}^4\big(|t-t'|^{\beta_i} + \|x(\cdot\wedge t) - x'(\cdot\wedge t')\|_T^{\beta_i}\big),
\end{align}
for all $(t,x),(t',x')\in[0,T]\times C([0,T];\R^d)$, where $\beta_1:=q_1$, $\beta_2:=p_1$, $\beta_3:=1/2$, $\beta_4:=1$. Hence (recalling \eqref{Estimate_rho_infty})
\begin{align*}
\big|\big(\tilde u_1-\tilde v_{n,\eps}\big)(t,x) - \big(\tilde u_1-\tilde v_{n,\eps}\big)(t',x')\big| \ &\leq \ 2C'\sum_{i=1}^4\big(|t-t'|^4+\|x(\cdot\wedge t) - x'(\cdot\wedge t')\|_T^4\big)^{\frac{\beta_i}{4}} \\
&\leq \ 2C'\sum_{i=1}^4\big(|t-t'|^4+\kappa_\infty\big((t,x),(t',x')\big)\big)^{\frac{\beta_i}{4}}.
\end{align*}
Now, by \eqref{TriangleIneq} we have
\[
|t-t'|^4 + \kappa_\infty\big((t,x),(t',x')\big) \ \leq \ 3\cdot2^3\Big(|t-t_0|^4 + \kappa_\infty\big((t,x),(t_0,x_0)\big)\Big) + 3\cdot2^3r^4.
\]
Hence
\[
\big|\big(\tilde u_1-\tilde v_{n,\eps}\big)(t,x) - \big(\tilde u_1-\tilde v_{n,\eps}\big)(t',x')\big| \ \leq \ 2C'\sum_{i=1}^4\bigg(3\cdot2^3\Big(|t-t_0|^4 + \kappa_\infty\big((t,x),(t_0,x_0)\big)\Big) + 3\cdot2^3r^4\bigg)^{\frac{\beta_i}{4}}.
\]
Using the last inequality, together with \eqref{G_Property4} and property 3) of $G_{n,\eps,k,r}$, we get (recall that $(t',x')$ satisfies $|t'-t_0|^4+\kappa_\infty((t',x'),(t_0,x_0))=r^4$)
\begin{align*}
G_{n,\eps,k,r}(t,x) \ &\leq \ \big(\tilde u_1-\tilde v_{n,\eps}\big)(t,x) - \big(\tilde u_1-\tilde v_{n,\eps}\big)(t',x') + G_{n,\eps,k,r}(t',x') - \psi_{k,r}\Big(|t-t_0|^4 + \kappa_\infty\big((t,x),(t_0,x_0)\big)\Big) \\
&\leq \ 2C'\sum_{i=1}^4\bigg(3\cdot2^3\Big(|t-t_0|^4 + \kappa_\infty\big((t,x),(t_0,x_0)\big)\Big) + 3\cdot2^3r^4\bigg)^{\frac{\beta_i}{4}} + \big(\tilde u_1-\tilde v_{n,\eps}\big)(t_0,x_0) - \frac{1}{2}Mr^p \\
&\quad \ - \psi_{k,r}\Big(|t-t_0|^4 + \kappa_\infty\big((t,x),(t_0,x_0)\big)\Big) \ \leq \ \big(\tilde u_1-\tilde v_{n,\eps}\big)(t_0,x_0) - \frac{1}{4}Mr^p,
\end{align*}
where the latter inequality follows from property c) of $\psi_{k,r}$ (namely from \eqref{property_c} with $s:=|t-t_0|^4+\kappa_\infty((t,x),(t_0,x_0))$) and the fact that $(t,x)$ satisfies \eqref{(t,x)(t',x')}.

\vspace{2mm}

\noindent\emph{Property 5).} We recall that the function $h_{k,r}$ takes the following form:
\[
h_{k,r}(t,x) \ = \ 3M\sqrt{k}\bigg\{\bigg(\frac{1}{\ell_{k,r}}+|t-t_0|^4+\kappa_\infty\big((t,x),(t_0,x_0)\big)\bigg)^{\frac{1}{k}} - \frac{1}{\ell_{k,r}^{\frac{1}{k}}}\bigg\}.
\]
Its derivatives are given by:
\begin{align*}
\partial_t^H h_{k,r}(t,x) \ &= \ \frac{3M}{\sqrt{k}}\frac{4(t-t_0)^3}{\big(\frac{1}{\ell_{k,r}}+|t-t_0|^4+\kappa_\infty((t,x),(t_0,x_0))\big)^{1-\frac{1}{k}}}, \\
\partial_{x_i}^V h_{k,r}(t,x) \ &= \ \frac{3M}{\sqrt{k}}\frac{\partial_{x_i}\kappa_\infty((t,x),(t_0,x_0))}{\big(\frac{1}{\ell_{k,r}}+|t-t_0|^4+\kappa_\infty((t,x),(t_0,x_0))\big)^{1-\frac{1}{k}}}, \\
\partial_{x_ix_j}^V h_{k,r}(t,x) \ &= \ \frac{3M}{\sqrt{k}}\frac{\partial_{x_ix_j}^V\kappa_\infty((t,x),(t_0,x_0))}{\big(\frac{1}{\ell_{k,r}}+|t-t_0|^4+\kappa_\infty((t,x),(t_0,x_0))\big)^{1-\frac{1}{k}}} \\
&\quad \ + \frac{3M}{\sqrt{k}}\bigg(\frac{1}{k}-1\bigg)\frac{\partial_{x_i}^V\kappa_\infty((t,x),(t_0,x_0))\partial_{x_j}^V\kappa_\infty((t,x),(t_0,x_0))}{\big(\frac{1}{\ell_{k,r}}+|t-t_0|^4+\kappa_\infty((t,x),(t_0,x_0))\big)^{2-\frac{1}{k}}}.
\end{align*}
The claim follows if we prove that the quantities multiplying $\frac{1}{\sqrt{k}}$ are bounded on the set $\{|t-t_0|^4+\kappa_\infty((t,x),(t_0,x_0))\leq r^4\}$ by some constant $D_r>0$, independent of $k$, but possibly depending on $r$. Consider for instance the first term appearing in the second-order derivative (the other terms can be treated in a similar way):
\[
\frac{|\partial_{x_ix_j}^V\kappa_\infty((t,x),(t_0,x_0))|}{\big(\frac{1}{\ell_{k,r}}+|t-t_0|^4+\kappa_\infty((t,x),(t_0,x_0))\big)^{1-\frac{1}{k}}}.
\]
By \eqref{Estimate_rho_infty} and \eqref{BoundSecondDeriv}, we have
\begin{align*}
&\frac{|\partial_{x_ix_j}^V\kappa_\infty((t,x),(t_0,x_0))|}{\big(\frac{1}{\ell_{k,r}}+|t-t_0|^4+\kappa_\infty((t,x),(t_0,x_0))\big)^{1-\frac{1}{k}}} \\
&\leq \ \ell_{k,r}^{1-\frac{1}{k}}|\partial_{x_ix_j}^V\kappa_\infty((t,x),(t_0,x_0))| \ \leq \ c\ell_{k,r}^{1-\frac{1}{k}}\big(\kappa_\infty((t,x),(t_0,x_0))\big)^{\frac{1}{2}} \\
&\leq \ c\ell_{k,r}^{1-\frac{1}{k}}\big(|t-t_0|^4+\kappa_\infty((t,x),(t_0,x_0))\big)^{\frac{1}{2}} \ \leq \ c\ell_{k,r}^{1-\frac{1}{k}}r^2.
\end{align*}
Recalling the expression of $\ell_{k,r}$ in \eqref{ell}, we find
\[
c\ell_{k,r}^{1-\frac{1}{k}}r^2 \ = \ c\Bigg(\frac{1}{r^{\frac{4}{k}}-\frac{r^p}{\sqrt{k}}}\Bigg)^{k-1}r^2 \ \overset{k\rightarrow\infty}{\longrightarrow} \ \frac{c}{r^2}.
\]
We conclude that the quantity $c\ell_{k,r}^{1-\frac{1}{k}}r^2$ is bounded in $k$ by some constant depending on $r$.

\vspace{4mm}

\noindent\textsc{Substep I-c.}
By \eqref{G_Propriety4} and the smooth variational principle (Theorem \ref{T:SmoothVP}) with $G:=G_{n,\eps,k,r}$ and $\lambda:=Mr^p$, we deduce that for every $\delta>0$ there exist $(\bar t,\bar x)\in[t_0,T]\times C([0,T];\R^d)$ and $\varphi\colon[0,T]\times C([0,T]\times\R^d)\rightarrow[0,+\infty]$ in \eqref{varphi} satisfying items i)-ii)-iii)-iv) of Theorem \ref{T:SmoothVP}. Moreover, the restriction of $\varphi$ to $[\bar t,T]\times C([0,T];\R^d)$ belongs to $C^{1,2}([\bar t,T]\times C([0,T];\R^d))$ and its derivatives are bounded by $c\delta$, for some constant $c\geq0$, independent of $n,\eps,k,r,\delta$. Moreover, from properties 1)-3)-4) of $G_{n,\eps,k,r}$ and item ii) of Theorem \ref{T:SmoothVP}, it follows that
\begin{equation}\label{Estimate}
|\bar t-t_0|^4+\kappa_\infty\big((\bar t,\bar x),(t_0,x_0)\big) \ \leq \ r^4.
\end{equation}
In particular
\[
|\bar t - t_0| \ \leq \ r.
\]
Recalling that $t_0<T$, we see that $\bar t<T$ for $r$ small enough.\\
Using again \eqref{Estimate} and \eqref{Estimate_rho_infty}, we obtain
\begin{equation}\label{x-x_0}
\big\|\bar x(\cdot\wedge\bar t) - x_0(\cdot\wedge t_0)\big\|_T \ \leq \ r.
\end{equation}

\vspace{4mm}

\noindent\textsc{Substep I-d}. We apply the definition of viscosity subsolution of \eqref{HJB_exp} to $\tilde u_1$ at the point $(\bar t,\bar x)$ with test function $(t,x)\mapsto\tilde v_{n,\eps}(t,x)+h_{k,r}(t,x)+\varphi(t,x)$, where (recall from \eqref{G_Property5} that $\psi_{k,r}$ does not appear because of \eqref{Estimate})
\[
h_{k,r}(t,x) \ = \ 3M\sqrt{k}\bigg\{\bigg(\frac{1}{\ell_{k,r}}+|t-t_0|^4+\kappa_\infty\big((t,x),(t_0,x_0)\big)\bigg)^{\frac{1}{k}} - \frac{1}{\ell^{\frac{1}{k}}}\bigg\}.
\]
Then, we obtain
\begin{align*}
&\tilde u_1(\bar t,\bar x) \\
&\leq \ \partial_t^H h_{k,r}(\bar t,\bar x) + \sup_{a\in A}\bigg\{\big\langle b(\bar t,\bar x,a),\partial^V_x h_{k,r}(\bar t,\bar x)\big\rangle + \frac{1}{2}\text{tr}\big[(\sigma\sigma\trans)(\bar t,\bar x,a)\partial^V_{xx}h_{k,r}(\bar t,\bar x)\big]\bigg\} \\
&+ \partial_t^H \varphi(\bar t,\bar x) + \sup_{a\in A}\bigg\{\big\langle b(\bar t,\bar x,a),\partial^V_x \varphi(\bar t,\bar x)\big\rangle + \frac{1}{2}\text{tr}\big[(\sigma\sigma\trans)(\bar t,\bar x,a)\partial^V_{xx}\varphi(\bar t,\bar x)\big]\bigg\} \\
&+ \partial_t^H \tilde v_{n,\eps}(\bar t,\bar x) + \sup_{a\in A}\bigg\{\big\langle b(\bar t,\bar x,a),\partial^V_x \tilde v_{n,\eps}(\bar t,\bar x)\big\rangle + \frac{1}{2}\text{tr}\big[(\sigma\sigma\trans)(\bar t,\bar x,a)\partial^V_{xx}\tilde v_{n,\eps}(\bar t,\bar x)\big] + \tilde f(\bar t,\bar x,a)\bigg\}.
\end{align*}
Recalling that $\tilde v_{n,\eps}$ is a classical solution of equation \eqref{HJB_exp_n,eps}, we find
\begin{align}
&(\tilde u_1 - \tilde v_{n,\eps})(\bar t,\bar x) \ \leq \ - \frac{1}{2}\eps^2\text{e}^{\bar t-t_0}\textup{tr}\big[\partial_{yy}\bar v_{n,\eps}(\bar t,y_n^{\bar t,\bar x})\big]
\notag \\
&+ \partial_t^H h_{k,r}(\bar t,\bar x) + \sup_{a\in A}\bigg\{\big\langle b(\bar t,\bar x,a),\partial^V_x h_{k,r}(\bar t,\bar x)\big\rangle + \frac{1}{2}\text{tr}\big[(\sigma\sigma\trans)(\bar t,\bar x,a)\partial^V_{xx}h_{k,r}(\bar t,\bar x)\big]\bigg\} \notag \\
&+ \partial_t^H \varphi(\bar t,\bar x) + \sup_{a\in A}\bigg\{\big\langle b(\bar t,\bar x,a),\partial^V_x \varphi(\bar t,\bar x)\big\rangle + \frac{1}{2}\text{tr}\big[(\sigma\sigma\trans)(\bar t,\bar x,a)\partial^V_{xx}\varphi(\bar t,\bar x)\big]\bigg\}
\notag\\
&+ \sup_{a\in A}\Big\{\big\langle b(\bar t,\bar x,a) - b_n(\bar t,\bar x,a),\partial^V_x \tilde v_{n,\eps}(\bar t,\bar x)\big\rangle + \tilde f(\bar t,\bar x,a) - \tilde f_n(\bar t,\bar x,a)\Big\},
\label{eq:visc1new}
\end{align}
where $y_n^{\bar t,\bar x}$ is given by \eqref{y_n^t,x} with $t$ and $x$ replaced respectively by $\bar t$ and $\bar x$. By item ii) of Theorem \ref{T:SmoothVP} and the fact that $\varphi\geq0$, we have
\begin{align*}
(u_1 - v_{n,\eps})(t_0,x_0) \ = \ (\tilde u_1 - \tilde v_{n,\eps})(t_0,x_0) \ &\leq \ \big(\tilde u_1 - \tilde v_{n,\eps}-\varphi\big)(\bar t,\bar x) \ \leq \ \big(\tilde u_1 - \tilde v_{n,\eps}\big)(\bar t,\bar x).
\end{align*}
In addition, using the boundedness of $b$ and $\sigma$, and recalling from Theorem \ref{T:SmoothVP} that the derivatives of $\varphi$ are bounded by $c\delta$, we deduce that there exists a constant $\Lambda\geq0$, independent of $n,\eps,k,r,\delta$, such that
\[
\partial_t^H \varphi(\bar t,\bar x) + \sup_{a\in A}\bigg\{\big\langle b(\bar t,\bar x,a),\partial^V_x \varphi(\bar t,\bar x)\big\rangle + \frac{1}{2}\text{tr}\big[(\sigma\sigma\trans)(\bar t,\bar x,a)\partial^V_{xx}\varphi(\bar t,\bar x)\big]\bigg\} \ \leq \ \Lambda\delta.
\]
Similarly, recall from property 5) of \textsc{Substep I-b} that the derivatives of $h_{k,r}$ are bounded by $\frac{D_r}{\sqrt{k}}$ on the set $\{|t-t_0|^4+\kappa_\infty((t,x),(t_0,x_0))\leq r^4\}$. Since by \eqref{Estimate} it holds that $|\bar t-t_0|^4+\kappa_\infty((\bar t,\bar x),(t_0,x_0))\leq r^4$, there exists some constant $\Xi_r\geq0$, independent of $n,\eps,k,\delta$, but possibly depending on $r$, such that
\[
\partial_t^H h_{k,r}(\bar t,\bar x) + \sup_{a\in A}\bigg\{\big\langle b(\bar t,\bar x,a),\partial^V_x h_{k,r}(\bar t,\bar x)\big\rangle + \frac{1}{2}\text{tr}\big[(\sigma\sigma\trans)(\bar t,\bar x,a)\partial^V_{xx}h_{k,r}(\bar t,\bar x)\big]\bigg\} \ \leq \ \frac{\Xi_r}{\sqrt{k}}.
\]
Plugging the last three estimates into \eqref{eq:visc1new} we get, using also estimates \eqref{EstimateKrylov} and \eqref{EstimateLipschitz1},
\begin{align}\label{Proof_u_1}
&(u_1 - v_{n,\eps})(t_0,x_0) \ \leq \ \frac{1}{2}\eps^2\text{e}^{\bar t-t_0}\,d\,d_n\,\bar C_n\,\textup{e}^{\bar C_n(T-\bar t)}\,\big(1 + \big|y_n^{\bar t,\bar x}\big|\big)^{3q} + \Lambda\delta+\frac{\Xi_r}{\sqrt{k}} \\
&+ \text{e}^{\bar t-t_0}\Big\{\bar L\sup_{a\in A}\big\{\big|b(\bar t,\bar x,a) - b_n(\bar t,\bar x,a)\big|\big\} + \sup_{a\in A}\big\{\big|f(\bar t,\bar x,a) - f_n(\bar t,\bar x,a)\big|\big\}\Big\}. \notag
\end{align}
Recall that $y_n^{\bar t,\bar x}$ is given by \eqref{y_n^t,x}. Then, from \eqref{Dy-x1}-\eqref{Dy-x2} we see that there exists a constant $c\geq0$, independent of $n,\eps,\delta$, such that
\[
\big|(y_n^{\bar t,\bar x})_j\big| \ \leq \ c\,\big\|\bar x(\cdot\wedge\bar t)\big\|_T, \qquad \forall\,j=1,\ldots,d_n.
\]
Hence
\begin{align*}
&\big|y_n^{\bar t,\bar x}\big| = \sqrt{\sum_j \big|(y_n^{\bar t,\bar x})_j\big|^2} \leq c\sqrt{d_n}\big\|\bar x(\cdot\wedge\bar t)\big\|_T \leq c\sqrt{d_n}\big\|x_0(\cdot\wedge t_0)\big\|_T \\
&+ c\sqrt{d_n}\big\|\bar x(\cdot\wedge\bar t) - x_0(\cdot\wedge t_0)\big\|_T \ \leq c\sqrt{d_n}\big\|x_0(\cdot\wedge t_0)\big\|_T + c\sqrt{d_n}\frac{1}{k},
\end{align*}
where the last inequality follows from \eqref{x-x_0}. Therefore, from \eqref{Proof_u_1} we obtain
\begin{align*}
&(u_1 - v_{n,\eps})(t_0,x_0) \ \leq \frac{1}{2}\eps^2\text{e}^{\bar t-t_0}dd_n\bar C_n\textup{e}^{\bar C_n T}\bigg(1 + c\sqrt{d_n}\big\|x_0(\cdot\wedge t_0)\big\|_T + c\sqrt{d_n}\frac{1}{k}\bigg)^{3q} \\
&+\Lambda\delta+\frac{\Xi_r}{\sqrt{k}}+ \text{e}^{\bar t-t_0}\Big\{\bar L\sup_{a\in A}\big\{\big|b(\bar t,\bar x,a) - b_n(\bar t,\bar x,a)\big|\big\} + \sup_{a\in A}\big\{\big|f(\bar t,\bar x,a) - f_n(\bar t,\bar x,a)\big|\big\}\Big\}.
\end{align*}
Now, notice that
\begin{align*}
&\sup_{|s-s'|\leq2^{-n}} \big|\bar x(s\wedge\bar t) - \bar x(s'\wedge\bar t)\big| \ \leq \ \sup_{|s-s'|\leq2^{-n}} \big|\bar x(s\wedge\bar t) - x_0(s\wedge t_0) + x_0(s\wedge t_0) - x_0(s'\wedge t_0) \notag \\
&+ x_0(s'\wedge t_0) - \bar x(s'\wedge\bar t)\big| \ \leq \ 2\big\|\bar x(\cdot\wedge \bar t) - x_0(\cdot\wedge t_0)\big\|_T + \sup_{|s-s'|\leq2^{-n}} \big|x_0(s\wedge t_0) - x_0(s'\wedge t_0)\big| \notag \\
&\leq \ 2r + \sup_{|s-s'|\leq2^{-n}} \big|x_0(s\wedge t_0) - x_0(s'\wedge t_0)\big|,
\end{align*}
where the last inequality follows from \eqref{x-x_0}. Then, using estimate \eqref{h_n-h_proof5} with $h$ and $h_n$ replaced respectively by $b$ and $b_n$ or $f$ and $f_n$, we get
\begin{align}\label{Proof_u_2}
(u_1 - v_{n,\eps})(t_0,x_0) &\leq \frac{1}{2}\eps^2\text{e}^{\bar t-t_0}dd_n\bar C_n\textup{e}^{\bar C_n T}\bigg(1 + c\sqrt{d_n}\big\|x_0(\cdot\wedge t_0)\big\|_T + c\sqrt{d_n}\frac{1}{k}\bigg)^{3q} + \Lambda\delta+\frac{\Xi_r}{\sqrt{k}} \notag \\
&\quad \ + \text{e}^{\bar t-t_0}(1+\bar L)\bigg(6Kr + 3K\sup_{|s-s'|\leq2^{-n}} \big|x_0(s\wedge t_0) - x_0(s'\wedge t_0)\big| \\
&\quad \ + \frac{2K}{\sqrt{d(2^n+1)}} + \int_0^\infty \frac{2}{\sqrt{2\pi}}\,\textup{e}^{-\frac{1}{2}r^2}\,w\big((\bar t+r/n)\wedge T-\bar t\big)\,dr\bigg), \notag
\end{align}
where we recall that $w$ is the modulus of continuity of $b$ and $f$ with respect to the time variable.\\
Now we pass to the limit. First, we send $\eps\rightarrow0^+$ so the first term on the right-hand side of \eqref{Proof_u_2} goes to zero. Then, we send $n\rightarrow+\infty$ so the second term on the second line in \eqref{Proof_u_2} goes to zero, together with the two terms on the third line. Afterwards, we let $k\rightarrow\infty$ and $\delta\rightarrow0^+$. Finally, we send $r\rightarrow0^+$, obtaining
\[
(u_1 - v)(t_0,x_0) \ \leq \ 0,
\]
which gives a contradiction to \eqref{Contradiction}.

\vspace{4mm}

\noindent\textsc{Step II.} \emph{Proof of $v\leq u_2$.} It is enough to show that
\begin{equation}\label{ClaimStepII}
u_2(t,x) \ \geq \ \sup_{s\in[t,T],a\in A}\E\bigg[\int_t^s f\big(r,X^{t,x,a},a\big)\,dr + v\big(s,X^{t,x,a}\big)\bigg],
\end{equation}
for every $(t,x)\in[0,T]\times C([0,T];\R^d)$, where $X^{t,x,a}$ corresponds to the process $X^{t,x,\alpha}$ with $\alpha\equiv a$. As a matter of fact, it holds that
\[
\sup_{s\in[t,T],a\in A}\E\bigg[\int_t^s f\big(r,X^{t,x,a},a\big)\,dr + v\big(s,X^{t,x,a}\big)\bigg] \ \geq \ v(t,x),
\]
where the validity of the above inequality can be shown simply taking $s=t$ in the left-hand side. For every fixed $s\in[0,T]$, $a\in A$, set
\[
v^{s,a}(t,x) \ := \ \E\bigg[\int_t^s f\big(r,X^{t,x,a},a\big)\,dr + v\big(s,X^{t,x,a}\big)\bigg], \qquad \forall\,(t,x)\in[0,s]\times C([0,T];\R^d).
\]
Notice that applying Proposition \ref{P:Value} with $g,T,A$ replaced by $v(s,\cdot),s,\{a\}$, respectively, we deduce that $v^{s,a}$ is bounded, jointly continuous on $[0,s]\times C([0,T];\R^d)$, and there exists a constant $\hat c\geq0$ $($depending only on $T$ and $K$$)$ such that
\[
|v^{s,a}(t,x) - v^{s,a}(t',x')| \ \leq \ \hat c\big(|t-t'|^{1/2} + \|x(\cdot\wedge t) - x'(\cdot\wedge t')\|_T\big),
\]
for all $(s,a)\in[0,T]\times A$, $(t,x),(t',x')\in[0,s]\times C([0,T];\R^d)$. By the boundedness of $f$ and \eqref{Value_Lipschitz}, we also have
\begin{equation}\label{s-Continuity}
|v^{s,a}(t,x) - v^{s',a}(t,x)| \ \leq \ K|s'-s| + c|s'-s|^{1/2},
\end{equation}
for all $a\in A$, $s,s'\in[0,T]$, $(t,x)\in[0,s\wedge s']\times C([0,T];\R^d)$, with $c$ being the same constant appearing in \eqref{Value_Lipschitz}.

In order to prove \eqref{ClaimStepII}, we proceed by contradiction and suppose that there exist $(t_0,x_0)\in[0,T]\times C([0,T];\R^d)$, $s_0\in[t_0,T]$, $a_0\in A$, such that
\[
(v^{s_0,a_0} - u_2)(t_0,x_0) \ > \ 0.
\]
It holds that $t_0<T$, otherwise $t_0=s_0=T$ and $u_2(T,x_0)\geq g(x_0)=v^{T,a_0}(T,x_0)$. Moreover, we can suppose, without loss of generality, that $t_0<s_0$. As a matter of fact, by \eqref{s-Continuity} and the fact that $t_0<T$, there exists $s_1\in(t_0,T]$ such that $(v^{s_1,a_0}-u_2)(t_0,x_0)>0$. Therefore, it is enough to consider $s_1$ in place of $s_0$. For this reason, in the sequel we assume that $t_0<s_0$.

Now, consider the sequences $\{b_n\}_n$, $\{f_n\}_n$, $\{\hat v_n(s_0,\cdot)\}_n$, $\{v_n^{s_0,a_0}\}_n$ introduced in Theorem \ref{T:CylindrApprox2}, with $v_n^{s_0,a_0}$ being a classical solution of equation \eqref{HJB_n^s_0,a_0}.

We set $\tilde u_2(t,x):=\text{e}^{t-t_0}\,u_2(t,x)$, for all $(t,x)\in[0,T]\times C([0,T];\R^d)$, and we define similarly $\tilde v_n^{s_0,a_0}$, $\tilde f$, $\tilde f_n$. We also define $\tilde g(x):=\text{e}^{t-t_0}\, g(x)$ and $\tilde v_n(s_0,x):=\text{e}^{s_0-t_0}\,\hat v_n(s_0,x)$, for all $x\in C([0,T];\R^d)$. Notice that $\tilde u_2$ is a (path-dependent) viscosity supersolution of the following path-dependent partial differential equation:
\[
\begin{cases}
\vspace{2mm}
\partial_t^H \tilde u_2(t,x) + \big\langle b(t,x,a_0),\partial^V_x \tilde u_2(t,x)\big\rangle + \dfrac{1}{2}\text{tr}\big[(\sigma\sigma\trans)(t,x,a_0)\partial^V_{xx}\tilde u_2(t,x)\big] \\
\vspace{2mm}+\,\tilde f(t,x,a_0) = \tilde u_2(t,x), &\hspace{-4.5cm}(t,x)\in[0,T)\times C([0,T];\R^d), \\
\tilde u_2(T,x) = \tilde g(x), &\hspace{-4.5cm}x\in C([0,T];\R^d).
\end{cases}
\]
Similarly, $\tilde v_n^{s_0,a_0}\in C_{\textup{pol}}^{1,2}([0,T]\times C([0,T];\R^d))$ and is a classical solution of the following equation:
\[
\hspace{-.4cm}\begin{cases}
\vspace{2mm}
\partial_t^H \tilde v_n^{s_0,a_0}(t,x) + \big\langle b_n(t,x,a_0),\partial^V_x \tilde v_n^{s_0,a_0}(t,x)\big\rangle + \dfrac{1}{2}\textup{tr}\big[(\sigma\sigma\trans)(t,x,a_0)\partial^V_{xx}\tilde v_n^{s_0,a_0}(t,x)\big] \\
\vspace{2mm}+\,\tilde f_n(t,x,a_0) = \tilde v_n^{s_0,a_0}(t,x), &\hspace{-5cm}(t,x)\in[0,s_0)\times C([0,T];\R^d), \\
\tilde v_n^{s_0,a_0}(s_0,x) = \tilde v_n(s_0,x), &\,\hspace{-5cm}x\in C([0,T];\R^d).
\end{cases}
\]
We begin noting that by \eqref{u_1_u_2} and item 4) of Theorem \ref{T:CylindrApprox2}, there exist constants $M\geq0$ and $p\in(0,1/2]$, independent of $n,\eps$, such that
\[
\big|\big(\tilde v_n^{s_0,a_0}-\tilde u_2\big)(t,x) - \big(\tilde v_n^{s_0,a_0}-\tilde u_2\big)(t',x')\big| \ \leq \ M\big(|t-t'|^p + \|x(\cdot\wedge t) - x'(\cdot\wedge t')\|_T^p\big),
\]
for all $(t,x),(t',x')\in[0,s_0]\times C([0,T];\R^d)$, with $|t-t'|\leq1$ and $\|x(\cdot\wedge t)-x'(\cdot\wedge t')\|_T\leq1$. To alleviate notation, we use the same symbols $M$ and $p$ as in \textsc{Step I}. Now, we proceed along the same lines as in \textsc{Substep I-b}. In particular, we consider the function
\begin{align*}
G_{n,k,r}(t,x) &:= \big(\tilde v_n^{s_0,a_0}-\tilde u_2\big)(t,x) - 3M\sqrt{k}\bigg\{\bigg(\frac{1}{\ell_{k,r}}+|t-t_0|^4+\kappa_\infty\big((t,x),(t_0,x_0)\big)\bigg)^{\frac{1}{k}} - \frac{1}{\ell_{k,r}^{\frac{1}{k}}}\bigg\} \\
&\quad \ - \psi_{k,r}\Big(|t-t_0|^4 + \kappa_\infty\big((t,x),(t_0,x_0)\big)\Big).
\end{align*}
As in \textsc{Substep I-b} we can prove that, for $k\geq C_r$,
\[
\sup_{(t,x)\in[0,s_0]\times C([0,T];\R^d)} G_{n,k,r}(t,x) \ \leq \ \big(\tilde v_n^{s_0,a_0}-\tilde u_2\big)(t_0,x_0) + Mr^p.
\]
Then, by the smooth variational principle (Theorem \ref{T:SmoothVP}) on $[0,s_0]\times C([0,T];\R^d)$ with $G:=G_{n,k,r}$ and $\lambda:=Mr^p$, we deduce that for every $\delta>0$ there exist $(\bar t,\bar x)\in[t_0,s_0]\times C([0,T];\R^d)$ and $\varphi\colon[0,T]\times C([0,T]\times\R^d)\rightarrow[0,+\infty]$ in \eqref{varphi} satisfying items i)-ii)-iii)-iv) of Theorem \ref{T:SmoothVP}. Moreover, the restriction of $\varphi$ to $[\bar t,T]\times C([0,T];\R^d)$ belongs to $C^{1,2}([\bar t,T]\times C([0,T];\R^d))$ and its derivatives are bounded by $c\delta$, for some constant $c\geq0$, independent of $n,k,r,\delta$. Moreover, from the properties of $G_{n,k,r}$ and item ii) of Theorem \ref{T:SmoothVP}, it follows that
\[
|\bar t - t_0| \ \leq \ r, \qquad \big\|\bar x(\cdot\wedge\bar t) - x_0(\cdot\wedge t_0)\big\|_T \ \leq \ r.
\]
Recalling that $t_0<s_0$, by the first inequality we deduce that $\bar t<s_0$ for $r$ small enough.

Now, we can proceed along the same lines as in \textsc{Substep I-d} to get a contradiction and conclude the proof. We only notice that in order to use the viscosity supersolution property of $\tilde u_2$ at the point $(\bar t,\bar x)$ we need to extend $v_n^{s_0,a_0}$ from $[0,s_0]\times C([0,T];\R^d)$ to $[0,T]\times C([0,T];\R^d)$ in such a way that the extension is still smooth. We can do this extending by reflection (see \cite{Hestenes}), namely defining the function
\[
v_n^{1,s_0,a_0}(t,x) =
\begin{cases}
v_n^{s_0,a_0}(t,x), \qquad &(t,x)\in[0,s_0]\times C([0,T];\R^d), \\
4v_n^{s_0,a_0}((3s_0-t)/2,x)-3v_n^{s_0,a_0}(2s_0-t,x), \qquad &(t,x)\in[s_0,2s_0]\times C([0,T];\R^d).
\end{cases}
\]
Notice that $v_n^{1,s_0,a_0}$ is non-anticipative. If $2s_0\geq T$ we have finished, otherwise we extend again $v_n^{1,s_0,a_0}$ and after a finite number of extensions we find a map defined on the entire space $[0,T]\times C([0,T];\R^d)$.
\end{proof}

\noindent Finally, we can state the following uniqueness result.

\begin{Corollary}\label{C:Uniq}
Suppose that Assumptions \ref{AssA}, \ref{AssB}, \ref{AssC} hold. Then, the value function $v$ in \eqref{Value} is the unique (path-dependent) viscosity solution of equation \eqref{HJB} in the class of functions satisfying \eqref{u_1_u_2}.
\end{Corollary}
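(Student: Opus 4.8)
The plan is to deduce the corollary directly from the existence result (Theorem \ref{T:Existence}), the regularity of the value function (Proposition \ref{P:Value}), and the comparison theorem (Theorem \ref{T:Comparison}); no genuinely new argument is needed, since all the difficulty is already concentrated in Theorem \ref{T:Comparison}.

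First I would record that $v$ is an admissible candidate. By Theorem \ref{T:Existence}, the value function $v$ defined by \eqref{Value} is a (path-dependent) viscosity solution of \eqref{HJB}. By Proposition \ref{P:Value}, $v$ is bounded; moreover estimate \eqref{Value_Lipschitz} gives $|v(t,x)-v(t',x')|\le c\big(d_\infty((t,x),(t',x'))^{1/2}+d_\infty((t,x),(t',x'))\big)$, and $r\mapsto c(r^{1/2}+r)$ is a modulus of continuity in the sense of Definition \ref{D:Modulus}; hence $v$ is uniformly continuous on $([0,T]\times C([0,T];\R^d),d_\infty)$. Thus $v$ is a bounded and uniformly continuous (path-dependent) viscosity solution of \eqref{HJB}.

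Next I would prove uniqueness. Let $u\colon[0,T]\times C([0,T];\R^d)\rightarrow\R$ be any bounded and uniformly continuous (path-dependent) viscosity solution of \eqref{HJB}. In particular $u$ is a viscosity subsolution and $v$ is a viscosity supersolution, so Theorem \ref{T:Comparison} applied with $u_1:=u$ and $u_2:=v$ yields $u\le v$ on $[0,T]\times C([0,T];\R^d)$. Symmetrically, $v$ is a viscosity subsolution and $u$ is a viscosity supersolution, so Theorem \ref{T:Comparison} applied with $u_1:=v$ and $u_2:=u$ yields $v\le u$. Combining the two inequalities gives $u=v$, which proves that $v$ is the unique bounded and uniformly continuous viscosity solution.

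As for obstacles: there is essentially none at this stage — the corollary is a one-line consequence of Theorem \ref{T:Comparison}. The only point requiring a moment's care is checking that the value function itself satisfies the hypotheses of Theorem \ref{T:Comparison}, i.e.\ that it is not merely continuous but uniformly continuous, which is immediate from \eqref{Value_Lipschitz} as noted above. Everything else (the cylindrical approximation procedure, the smooth gauge-type function $\kappa_\infty$, and the variational principle of Theorem \ref{T:SmoothVP}) has already been absorbed into the proof of Theorem \ref{T:Comparison}.
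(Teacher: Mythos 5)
Your proposal is correct and follows essentially the same route as the paper: existence (Theorem \ref{T:Existence}) together with two applications of the comparison theorem (Theorem \ref{T:Comparison}), first with $u_1:=u$, $u_2:=v$ and then with the roles swapped, yields $u=v$. Your additional check that $v$ is bounded and uniformly continuous via Proposition \ref{P:Value} and estimate \eqref{Value_Lipschitz} is exactly the point left implicit in the paper's one-line proof.
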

\begin{proof}[\textbf{Proof.}]
If $u$ is another (path-dependent) viscosity solution of equation \eqref{HJB} satisfying \eqref{u_1_u_2}, then, by Theorem \ref{T:Comparison}, we get the two following inequalities:
\[
u \ \leq \ v, \qquad\qquad v \ \leq \ u,
\]
from which the claim follows.
\end{proof}

\begin{appendices}

\section{Pathwise derivatives and functional It\^o's\\ formula}
\label{App:PathwiseDeriv}

In the present appendix, we briefly recall the definitions of pathwise (or functional) derivatives following \cite[Section 2]{CR19}, for which we refer for more details.\\
As we follow the standard approach (as it was introduced in the seminal paper \cite{dupire}), in order to introduce the pathwise derivatives for a map $u\colon[t_0,T]\times C([0,T];\R^d)\rightarrow\R$, $t_0\in[0,T)$, we firstly define them for a map $\hat u\colon[t_0,T]\times D([0,T];\R^d)\rightarrow\R$, with $D([0,T];\R^d)$ being the set of c\`adl\`ag paths, endowed with the supremum norm $\|\hat x\|_T=\sup_{s\in[0,T]}|\hat x(s)|$, for every $\hat x\in D([0,T];\R^d)$. We also define on $[0,T]\times D([0,T];\R^d)$ the pseudometric $\hat d_\infty\colon([0,T]\times D([0,T];\R^d))^2\rightarrow[0,\infty)$ as
\[
\hat d_\infty\big((t,\hat x),(t',\hat x')\big) \ := \ |t - t'| + \big\|\hat x(\cdot\wedge t) - \hat x'(\cdot\wedge t')\big\|_T.
\]
On $[t_0,T]\times D([0,T];\R^d)$ we consider the restriction of $\hat d_\infty$, which we still denote by the same symbol.

\begin{Definition}
Consider a map $\hat u\colon[t_0,T]\times D([0,T];\R^d)\rightarrow\R$, $t_0,\in[0,T)$.
\begin{enumerate}[\upshape (i)]
\item For every $(t,\hat x)\in[t_0,T]\times D([0,T];\R^d)$, with $t<T$, the horizontal derivative of $\hat u$ at $(t,\hat x)$ is defined as (if the limit exists)
\[
\partial_t^H\hat u(t,\hat x) \ := \ \lim_{\delta\rightarrow0^+}\frac{\hat u(t+\delta,\hat x(\cdot\wedge t)) - \hat u(t,\hat x(\cdot\wedge t))}{\delta}.
\]
At $t=T$, it is defined as
\[
\partial_t^H\hat u(T,\hat x) \ := \ \lim_{t\rightarrow T^-} \partial_t^H\hat u(t,\hat x).
\]
\item For every $(t,\hat x)\in[t_0,T]\times D([0,T];\R^d)$, the vertical derivatives of first and second-order of $\hat u$ at $(t,\hat x)$ are defined as (if the limits exist)
\begin{align*}
\partial_{x_i}^V\hat u(t,\hat x) \ &:= \ \lim_{h\rightarrow0}\frac{\hat u(t,\hat x + he_i1_{[t,T]}) - \hat u(t,\hat x)}{h}, \\
\partial_{x_i x_j}^V\hat u(t,\hat x) \ &:= \ \partial_{x_j}^V(\partial_{x_i}^V\hat u)(t,\hat x),
\end{align*}
where $e_1,\ldots,e_d$ is the standard orthonormal basis of $\R^d$. We also denote $\partial_x^V\hat u=(\partial_{x_1}^V\hat u,\ldots,\partial_{x_d}^V\hat u)$ and $\partial_{xx}^V\hat u=(\partial_{x_i x_j}^V\hat u)_{i,j=1,\ldots,d}$.
\end{enumerate}
\end{Definition}

\begin{Definition}
$C^{1,2}([t_0,T]\times D([0,T];\R^d))$, $t_0,\in[0,T)$, is the set of continuous real-valued maps $\hat u$ defined on $([t_0,T]\times D([0,T];\R^d),\hat d_\infty)$, such that $\partial_t^H\hat u$, $\partial_x^V\hat u$, $\partial_{xx}^V\hat u$ exist everywhere on $([t_0,T]\times D([0,T];\R^d),\hat d_\infty)$ and are continuous.
\end{Definition}

\noindent We can now define the pathwise derivatives for a map $u\colon[t_0,T]\times C([0,T];\R^d)\rightarrow\R$. To this end, the following consistency property plays a crucial role.

\begin{Lemma}\label{L:Consistency}
If $\hat u_1,\hat u_2\in C^{1,2}([t_0,T]\times D([0,T];\R^d))$ coincide on continuous paths, namely
\[
\hat u_1(t,x) \ = \ \hat u_2(t,x), \qquad \forall\,(t,x)\in[t_0,T]\times C([0,T];\R^d),
\]
then the same holds for their pathwise derivatives: for every $(t,x)\in[t_0,T]\times C([0,T];\R^d)$,
\begin{align*}
\partial_t^H\hat u_1(t,x) \ &= \ \partial_t^H\hat u_2(t,x), \\
\partial^V_x \hat u_1(t,x) \ &= \ \partial^V_x \hat u_2(t,x), \\
\partial^V_{xx} \hat u_1(t,x) \ &= \ \partial^V_{xx} \hat u_2(t,x).
\end{align*}
\end{Lemma}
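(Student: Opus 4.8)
The plan is to reduce the statement to the following: the function $w:=\hat u_1-\hat u_2$ lies in $C^{1,2}([t_0,T]\times D([0,T];\R^d))$ and vanishes identically on $[t_0,T]\times C([0,T];\R^d)$, and we must show that $\partial_t^H w$, $\partial^V_x w$ and $\partial^V_{xx}w$ vanish on $[t_0,T]\times C([0,T];\R^d)$. For the horizontal derivative this is immediate: when $x$ is continuous and $t<T$, the difference quotient defining $\partial_t^H w(t,x)$ only involves $w$ evaluated at the stopped path $x(\cdot\wedge t)$, which is again a continuous path, so every quotient equals $0$; at $t=T$ the defining limit $\lim_{t\uparrow T}\partial_t^H w(t,x)$ is then $0$ as well.

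For the vertical derivatives the difficulty is structural, and I expect it to be the crux of the argument: the perturbed paths $x+he_i1_{[t,T]}$ have a genuine jump at time $t$ which survives the stopping at $t$, so they cannot be approximated by continuous paths in the pseudometric $\hat d_\infty$ and one cannot conclude from $\hat d_\infty$-continuity of $w$ alone. Instead I would run $w$ along auxiliary continuous processes issued from $x(\cdot\wedge t)$ and invoke the functional It\^o formula for $C^{1,2}$ functionals on c\`adl\`ag path space evaluated along continuous semimartingales (the classical Dupire--Cont--Fourni\'e formula of \cite{dupire,contfournie10}, established directly by telescoping along partitions, hence available independently of, and prior to, the present consistency statement; in particular this introduces no circularity with the definition of $C^{1,2}([t_0,T]\times C([0,T];\R^d))$, which relies on this very lemma). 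Concretely, fix a continuous $x$ and $t<T$, pick $\varepsilon\in(0,T-t)$ and, for $A\in\R^{d\times d}$ invertible with $AA\trans=\Sigma$, let $X$ be the continuous $\R^d$-valued process with $X_r=x(r)$ for $r\le t$ and $X_r=x(t)+A(B_r-B_t)$ for $r\in(t,T]$. Since $X_{\cdot\wedge r}$ is a continuous path, $w(r,X_{\cdot\wedge r})\equiv 0$, so the functional It\^o formula gives
\[
0 \ = \ \int_t^s\Big(\partial_t^H w+\tfrac12\textup{tr}\big[\partial^V_{xx}w\,\Sigma\big]\Big)(r,X_{\cdot\wedge r})\,dr \ + \ \int_t^s \partial^V_x w(r,X_{\cdot\wedge r})\,A\,dB_r, \qquad s\in[t,t+\varepsilon].
\]
Comparing quadratic variations forces $\partial^V_x w(r,X_{\cdot\wedge r})=0$ for a.e.\ $r$ a.s., and uniqueness of the finite-variation part forces $\partial_t^H w(r,X_{\cdot\wedge r})+\tfrac12\textup{tr}[\partial^V_{xx}w(r,X_{\cdot\wedge r})\Sigma]=0$ for a.e.\ $r$ a.s. Choosing $r_n\downarrow t$ inside the corresponding full-measure set and using $X_{\cdot\wedge r_n}\to x(\cdot\wedge t)$ together with the $\hat d_\infty$-continuity of the derivatives of $w$ yields $\partial^V_x w(t,x(\cdot\wedge t))=0$ and $\partial_t^H w(t,x(\cdot\wedge t))+\tfrac12\textup{tr}[\partial^V_{xx}w(t,x(\cdot\wedge t))\Sigma]=0$; since $\partial_t^H w(t,x(\cdot\wedge t))=0$ by the previous step and $\Sigma$ runs over all symmetric positive-definite matrices --- which span $\mathcal S(d)$ --- the symmetry $\partial^V_{x_ix_j}w=\partial^V_{x_jx_i}w$ gives $\partial^V_{xx}w(t,x(\cdot\wedge t))=0$. (For $\partial^V_x w$ alone one may avoid quadratic variation altogether and run $w$ along the finite-variation line $r\mapsto x(t)+(r-t)e_i$, using the elementary $C^{1,2}$ change-of-variable formula to get $\partial_t^H w+\partial^V_{x_i}w=0$ on continuous paths.) By non-anticipativity of $w$ and of its pathwise derivatives, these identities at $x(\cdot\wedge t)$ are the identities at $x$; and the case $t=T$ follows because $(t,x(\cdot\wedge t))\to(T,x)$ in $\hat d_\infty$ as $t\uparrow T$ (by continuity of $x$) while the derivatives of $w$ are $\hat d_\infty$-continuous.

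Beyond the structural obstacle above, the main points requiring care are: (i) the auxiliary formulas invoked must be precisely the ones for $C^{1,2}$ functionals on c\`adl\`ag path space along continuous processes, to avoid circularity; (ii) the passage $r\downarrow t$ must be carried out along a sequence lying in the full-measure set where the a.e.\ identities hold, invoking continuity of the derivatives; and (iii) one relies on the standard auxiliary facts that the vertical derivatives are non-anticipative and that $\partial^V_{x_ix_j}=\partial^V_{x_jx_i}$, the latter being exactly what upgrades ``$\textup{tr}[\partial^V_{xx}w\,\Sigma]=0$ for every symmetric positive-definite $\Sigma$'' to ``$\partial^V_{xx}w=0$''.
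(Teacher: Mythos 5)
The paper does not prove this lemma itself---it defers to \cite[Lemma 2.1]{CR19}---and your argument is essentially the standard consistency proof used there (going back to \cite{contfournie10}): reduce to $w=\hat u_1-\hat u_2$, observe that the horizontal difference quotients only see stopped continuous paths, and recover the vertical derivatives by applying the functional It\^o formula for $C^{1,2}$ functionals on c\`adl\`ag path space along the continuous semimartingale that follows $x$ up to $t$ and then $x(t)+A(B_\cdot-B_t)$, identifying the martingale and finite-variation parts of the resulting identically vanishing semimartingale and letting $\Sigma=AA\trans$ range over all symmetric positive-definite matrices. The auxiliary facts you flag do hold in this framework---non-anticipativity of $w$ and of its derivatives follows from $\hat d_\infty$-continuity because $(t,\hat x)$ and $(t,\hat x(\cdot\wedge t))$ are at zero pseudometric distance, and symmetry of $\partial^V_{xx}w$ follows from the classical Schwarz theorem applied to $(h_1,h_2)\mapsto w(t,\hat x+h_1e_i1_{[t,T]}+h_2e_j1_{[t,T]})$, whose mixed partials are the (continuous) second vertical derivatives---so your proof is correct and follows the same route as the cited source.
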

\begin{proof}[\textbf{Proof.}]
See \cite[Lemma 2.1]{CR19}.
\end{proof}

\noindent We can now given the following definition.

\begin{Definition}\label{D:C^1,2}
$C^{1,2}([t_0,T]\times C([0,T];\R^d))$, $t_0,\in[0,T)$, is the set of continuous real-valued maps $u$ defined on $([t_0,T]\times C([0,T];\R^d),d_\infty)$, for which there exists $\hat u\in C^{1,2}([t_0,T]\times D([0,T];\R^d))$ such that
\[
u(t,x) \ = \ \hat u(t,x), \qquad \forall\,(t,x)\in[t_0,T]\times C([0,T];\R^d).
\]
We also define, for every $(t,x)\in[t_0,T]\times C([0,T];\R^d)$,
\begin{align*}
\partial_t^H u(t,x) \ &:= \ \partial_t^H\hat u(t,x), \\
\partial^V_x u(t,x) \ &:= \ \partial^V_x \hat u(t,x), \\
\partial^V_{xx} u(t,x) \ &:= \ \partial^V_{xx} \hat u(t,x).
\end{align*}
\end{Definition}

\begin{Remark}
Thanks to the consistency property stated in Lemma \ref{L:Consistency}, the definition of pathwise derivatives of $u$ does not depend on the map $\hat u$ appearing in Definition \ref{D:C^1,2}.
\end{Remark}

\noindent In the present paper we also need to consider the following subset of $C^{1,2}([t_0,T]\times C([0,T];\R^d))$.

\begin{Definition}\label{D:C_pol^1,2}
We denote by $C_{\textup{pol}}^{1,2}([t_0,T]\times C([0,T];\R^d))$ the set of $u\in C^{1,2}([t_0,T]\times C([0,T];\R^d))$ such that $u$, $\partial_t^H u$, $\partial^V_x u$, $\partial^V_{xx} u$ satisfy a polynomial growth condition: there exist constants $M\geq0$ and $q\geq0$ such that
\[
\big|\partial_t^H u(t,x)\big| + \big|\partial_x^V u(t,x)\big| + \big|\partial_{xx}^V u(t,x)\big| \ \leq \ M\,\big(1 + \|x\|_t^q\big),
\]
for all $(t,x)\in[t_0,T]\times C([0,T];\R^d)$.
\end{Definition}

\noindent Finally, we state the so-called functional It\^o formula.

\begin{Theorem}[Functional It\^o's formula]\label{T:Ito}
Let $u\in C^{1,2}([t_0,T]\times C([0,T];\R^d))$. Let also $(\Omega,\mathcal F,(\mathcal F_t)_{t\in[t_0,T]},\P)$ be a filtered probability space, with $(\mathcal F_t)_{t\in[t_0,T]}$ satisfying the usual conditions, on which a $d$-dimensional continuous semimartingale $X=(X_t)_{t\in[t_0,T]}$ is defined, with $X=(X^1,\ldots,X^d)$. Then, it holds that
\begin{align}\label{Ito_formula}
u(t,X) \ &= \ u(t_0,X) + \int_{t_0}^t \partial_t^H u(s,X)\,ds + \frac{1}{2}\sum_{i,j=1}^d \int_{t_0}^t \partial^V_{x_i x_j} u(s,X)\,d[X^i,X^j]_s \\
&\quad \ + \sum_{i=1}^d \int_{t_0}^t \partial^V_{x_i}u(s,X)\,dX_s^i, \hspace{2.1cm} \text{for all }\,t_0\leq t\leq T,\,\,\P\text{-a.s.} \notag
\end{align}
\end{Theorem}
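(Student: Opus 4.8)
The plan is to deduce the statement from its non-continuous counterpart and then specialize. By Definition~\ref{D:C^1,2} there is $\hat u\in C^{1,2}([t_0,T]\times D([0,T];\R^d))$ agreeing with $u$ on continuous paths; since $X$ has continuous trajectories it suffices to establish \eqref{Ito_formula} for $\hat u$ and then invoke Lemma~\ref{L:Consistency} to replace $\partial_t^H\hat u,\partial_x^V\hat u,\partial_{xx}^V\hat u$ by $\partial_t^Hu,\partial_x^Vu,\partial_{xx}^Vu$ along $X$. First I would localize: set $\tau_m:=\inf\{s\geq t_0:|X_s|\vee\max_{i,j}|[X^i,X^j]_s|\geq m\}\wedge T$, so that $\tau_m= T$ eventually a.s.; proving \eqref{Ito_formula} at $t\wedge\tau_m$ and letting $m\to\infty$ (dominated convergence for the $ds$- and $d[X^i,X^j]$-integrals, and the standard localization property of stochastic integrals) reduces everything to the case where $X$ and its brackets are bounded and $X$ stays in a fixed ball of $\R^d$.

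Next I would discretize in time. Fix $t\in[t_0,T]$ and partitions $\pi_n=\{t_0=s_0<\dots<s_{N_n}=t\}$ with mesh tending to $0$, and let $X^{\pi_n}$ be the associated piecewise-constant càdlàg path, $X^{\pi_n}_s=X_{s_i}$ on $[s_i,s_{i+1})$ and $X^{\pi_n}_t=X_t$. Telescoping gives $\hat u(t,X^{\pi_n})-\hat u(t_0,X^{\pi_n})=\sum_i\big(\hat u(s_{i+1},X^{\pi_n}_{\cdot\wedge s_{i+1}})-\hat u(s_i,X^{\pi_n}_{\cdot\wedge s_i})\big)$, and I would split each summand into a \emph{horizontal} increment $\hat u(s_{i+1},X^{\pi_n}_{\cdot\wedge s_i})-\hat u(s_i,X^{\pi_n}_{\cdot\wedge s_i})$ and a \emph{vertical} increment $\hat u(s_{i+1},X^{\pi_n}_{\cdot\wedge s_{i+1}})-\hat u(s_{i+1},X^{\pi_n}_{\cdot\wedge s_i})$. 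The reason for using the step path is that $X^{\pi_n}_{\cdot\wedge s_{i+1}}=X^{\pi_n}_{\cdot\wedge s_i}+(X_{s_{i+1}}-X_{s_i})1_{[s_{i+1},T]}$, so the vertical increment is a genuine finite-dimensional bump. Since $X^{\pi_n}_{\cdot\wedge s_i}$ is flat after $s_i$, continuity of $\partial_t^H\hat u$ makes $r\mapsto\hat u(r,X^{\pi_n}_{\cdot\wedge s_i})$ a $C^1$ function on $[s_i,s_{i+1}]$ with derivative $\partial_t^H\hat u(r,X^{\pi_n}_{\cdot\wedge s_i})=\partial_t^H\hat u(r,X^{\pi_n})$, so the horizontal increments sum to $\int_{t_0}^t\partial_t^H\hat u(r,X^{\pi_n})\,dr$; and a second-order Taylor expansion of the $C^2$ map $h\mapsto\hat u(s_{i+1},X^{\pi_n}_{\cdot\wedge s_i}+h1_{[s_{i+1},T]})$ splits each vertical increment into $\langle\partial_x^V\hat u(s_{i+1},X^{\pi_n}_{\cdot\wedge s_i}),\Delta X_i\rangle$, a second-order term $\tfrac12\langle\partial_{xx}^V\hat u(s_{i+1},X^{\pi_n}_{\cdot\wedge s_i})\Delta X_i,\Delta X_i\rangle$, and a Taylor remainder, where $\Delta X_i:=X_{s_{i+1}}-X_{s_i}$.

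Then I would pass to the limit $n\to\infty$. On the left, $\|X^{\pi_n}-X_{\cdot\wedge t}\|_T\to0$ a.s.\ by uniform continuity of the sample paths, so $\hat u(t,X^{\pi_n})\to\hat u(t,X)=u(t,X)$ by $\hat d_\infty$-continuity; likewise $\hat u(t_0,X^{\pi_n})\to u(t_0,X)$. The horizontal sum converges to $\int_{t_0}^t\partial_t^Hu(r,X)\,dr$ by dominated convergence; the first-order vertical sum is a non-anticipative left-point Riemann sum of the continuous adapted integrand $\partial_x^V\hat u(\cdot,X^{\pi_n})$ against the continuous semimartingale $X$, and converges in probability (hence along a subsequence a.s.) to $\sum_i\int_{t_0}^t\partial_{x_i}^Vu(s,X)\,dX_s^i$; the second-order vertical sum converges to $\tfrac12\sum_{i,j}\int_{t_0}^t\partial_{x_ix_j}^Vu(s,X)\,d[X^i,X^j]_s$ via the standard fact that $\sum_k\phi(s_k)\Delta X_k^i\Delta X_k^j\to\int\phi\,d[X^i,X^j]$ for continuous adapted $\phi$. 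The main obstacle is showing that the accumulated Taylor remainder $\sum_i o(|\Delta X_i|^2)$ tends to $0$: writing it in integral form it is bounded by the oscillation of $\partial_{xx}^V\hat u$ over $\hat d_\infty$-balls of radius $\max_i|\Delta X_i|\to0$ centered in the a.s.\ compact set $\{(r,X_{\cdot\wedge r}):r\in[t_0,t]\}$, times $\sum_i|\Delta X_i|^2$, which stays bounded; making this uniform-continuity estimate rigorous in the absence of local compactness of $D([0,T];\R^d)$ is the delicate point, and here I would follow the arguments of \cite{dupire}, \cite{contfournie10,ContFournieAP} and \cite[Section~2]{CR19}.
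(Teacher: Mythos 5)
The paper does not prove Theorem \ref{T:Ito} itself but simply refers to \cite[Theorem 2.2]{CR19}, and your sketch is precisely the standard discretization argument (horizontal/vertical splitting of increments along piecewise-constant approximations of the path, second-order Taylor expansion of the vertical bumps, passage to the limit of the Riemann sums) on which that cited proof, going back to Dupire and Cont--Fourni\'e, is based. Your outline is correct, and the one genuinely delicate step you flag --- uniform control of the Taylor remainders and of the integrands in the stochastic Riemann sums in the absence of local compactness of the path space --- is exactly the point settled in the cited references, so deferring it there is consistent with the paper's own treatment.
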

\begin{proof}[\textbf{Proof.}]
See \cite[Theorem 2.2]{CR19}.
\end{proof}

\section{Cylindrical approximations}
\label{App:CylindricalApprox}

\subsection{The deterministic calculus via regularization}

In the present appendix, we need to consider ``cylindrical'' maps defined on $C([0,T];\R^d)$, namely maps depending on a path $x\in C([0,T];\R^d)$ only through a finite number of integrals with respect to $x$. An integral with respect to $x$ can be formally written as ``$\int_{[0,T]}\varphi(s)dx(s)$''. In order to give a meaning to the latter notation, it is useful to notice that we look for a deterministic integral which coincides with the It\^o integral when $x$ is replaced by an It\^o process (such a property will be exploited in the sequel). This is the case if we interpret ``$\int_{[0,T]}\varphi(s)dx(s)$'' as the deterministic version of the forward integral, which we now introduce and denote by $\int_{[0,T]}\varphi(s)d^-x(s)$. For more details on such an integral and, more generally, on the deterministic calculus via regularization we refer to \cite[Section 3.2]{digirfabbrirusso13} and \cite[Section 2.2]{cosso_russoStrict}. The only difference with respect to \cite{digirfabbrirusso13} and \cite{cosso_russoStrict} being that here we consider $d$-dimensional paths (with $d$ possibly greater than $1$), even though, as usual, we work component by component, therefore relying on the one-dimensional theory.

\begin{Definition}\label{D:DetInt}
Let $x\colon[0,T]\rightarrow\R^d$ and $\varphi\colon[0,T]\rightarrow\R$ be c\`{a}dl\`{a}g functions. When the following limit
\[
\int_{[0,T]}\varphi(s)\,d^- x(s) \ := \ \lim_{\eps\rightarrow0^+}\int_0^T \varphi(s)\,\frac{x(T\wedge(s+\eps))-x(s)}{\eps}\,ds
\]
exists and it is finite, we denote it by $\int_{[0,T]} \varphi(s)\,d^- x(s)$ and call it \textbf{forward integral of $\varphi$ with respect to $x$}.
\end{Definition}

When $\varphi$ is continuous and of bounded variation, an integration by parts formula provides an explicit representation of the forward integral of $\varphi$ with respect to $x$.

\begin{Proposition}
\label{P:IntbyParts}
Let $x\colon[0,T]\rightarrow\R^d$ be a c\`{a}dl\`{a}g function and let $\varphi\colon[0,T]\rightarrow\R$ be c\`{a}dl\`{a}g and of bounded variation. The following \textbf{integration by parts formula} holds:
\begin{equation}\label{IntbyParts}
\int_{[0,T]}  \varphi(s)\,d^- x(s) \ = \ \varphi(T-) \,x(T) - \varphi(0)x(0)  - \int_{(0,T]} x(s)\,d\varphi(s),
\end{equation}
where $\int_{(0,T]} x(s)\,d\varphi(s)$ is a Lebesgue-Stieltjes integral on $(0,T]$.
\end{Proposition}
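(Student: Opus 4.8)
The plan is to compute the limit defining the forward integral directly from Definition~\ref{D:DetInt}; since that integral is taken componentwise, it suffices to treat $d=1$. Fix $\eps\in(0,T)$ and write
\[
I_\eps \ := \ \int_0^T\varphi(s)\,\frac{x(T\wedge(s+\eps))-x(s)}{\eps}\,ds \ = \ \frac1\eps\int_0^T\varphi(s)\,x(T\wedge(s+\eps))\,ds \ - \ \frac1\eps\int_0^T\varphi(s)\,x(s)\,ds .
\]
First I would split the first integral over $[0,T-\eps]$ and $(T-\eps,T]$: on $(T-\eps,T]$ one has $T\wedge(s+\eps)=T$, so that piece equals $x(T)\,\tfrac1\eps\int_{T-\eps}^T\varphi(s)\,ds$, which tends to $\varphi(T)x(T)$ by continuity of $\varphi$ at $T$; on $[0,T-\eps]$ the substitution $u=s+\eps$ turns the piece into $\tfrac1\eps\int_\eps^T\varphi(u-\eps)x(u)\,du$. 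Subtracting $\tfrac1\eps\int_0^T\varphi(s)x(s)\,ds$ and isolating its sub-integral on $[0,\eps]$ then reorganises $I_\eps$ into an interior term $\tfrac1\eps\int_\eps^T[\varphi(u-\eps)-\varphi(u)]\,x(u)\,du$ plus two elementary endpoint remainders associated with the strips near $s=0$ and $s=T$.

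For the interior term I would exploit that $\varphi$ has bounded variation: writing $\varphi(u-\eps)-\varphi(u)=-\int_{(u-\eps,u]}d\varphi(r)$ and applying Fubini's theorem against the finite product measure $|d\varphi|\otimes ds$ gives
\[
\frac1\eps\int_\eps^T[\varphi(u-\eps)-\varphi(u)]\,x(u)\,du \ = \ -\int_{(0,T]}\Big(\frac1\eps\int_{r\vee\eps}^{(r+\eps)\wedge T}x(u)\,du\Big)\,d\varphi(r).
\]
For each fixed $r\in(0,T)$ the inner average tends to $x(r^+)=x(r)$ as $\eps\to0^+$ by right-continuity of $x$; it is bounded by $\sup_{[0,T]}|x|<\infty$ uniformly in $r$ and $\eps$ (here we use that $x$, being c\`adl\`ag on the compact interval $[0,T]$, is bounded); and $d\varphi$ carries no mass at the single endpoint $r=T$ since $\varphi$ is continuous. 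Dominated convergence then yields $-\int_{(0,T]}x(r)\,d\varphi(r)$. Adding this limit to the two endpoint contributions produces the integration by parts formula \eqref{IntbyParts}.

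The technical heart is precisely this passage to the limit under the Lebesgue--Stieltjes integral; the rest is bookkeeping of the shrinking endpoint strips. The two facts that make it work are (i) the boundedness of $x$ on $[0,T]$, which supplies the dominating constant, and (ii) the Lebesgue-type differentiation statement $\tfrac1\eps\int_r^{r+\eps}x(u)\,du\to x(r^+)$, valid at \emph{every} $r$ because $x$ is right-continuous, so that no exceptional $|d\varphi|$-null set is incurred. The reduction to $d=1$ is harmless since the forward integral and the Lebesgue--Stieltjes integral are both defined component by component. (One may alternatively invoke \cite[Section 3.2]{digirfabbrirusso13}, where this identity is proved; the computation above is short and self-contained.)
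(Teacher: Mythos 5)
Your decomposition and your treatment of the interior term are sound and essentially follow the same route as the paper: split off the strip at $T$, change variables to produce the difference quotient $\frac1\eps\int_\eps^T[\varphi(u-\eps)-\varphi(u)]\,x(u)\,du$, rewrite it by Fubini against the finite measure $|d\varphi|\otimes du$, and pass to the limit by dominated convergence using the boundedness of the c\`adl\`ag path $x$ and its right-continuity; your explicit handling of the truncations $r\vee\eps$, $(r+\eps)\wedge T$ and of the $|d\varphi|$-null point $r=T$ is in fact more careful than the corresponding displays \eqref{IbP_1}--\eqref{IbP_2}.

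The gap is the strip at $s=0$. In your reorganisation the subtracted integral leaves the remainder $-\frac1\eps\int_0^\eps\varphi(s)x(s)\,ds$, which you call elementary bookkeeping but never evaluate. It does not vanish: by continuity of $\varphi$ and right-continuity of $x$ at $0$ it converges to $-\varphi(0)x(0)$. Hence the limit your computation actually yields is
\[
\varphi(T)\,x(T)\;-\;\varphi(0)\,x(0)\;-\;\int_{(0,T]}x(s)\,d\varphi(s),
\]
and your closing sentence ``adding this limit to the two endpoint contributions produces \eqref{IntbyParts}'' does not follow: the two expressions differ by $\varphi(0)x(0)$, which is nonzero in general (take $d=1$, $x\equiv1$, $\varphi\equiv1$: every regularised integral is identically zero, while the right-hand side of \eqref{IntbyParts} equals $1$). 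So you must either carry the boundary term $-\varphi(0)x(0)$ into the final identity, or state explicitly the hypothesis or convention under which it is absent ($x(0)=0$, $\varphi(0)=0$, or a Stieltjes integral over $[0,T]$ with the convention $\varphi(0^-)=0$). Note that this is exactly where your argument and the paper's diverge: the paper extends $\varphi(s):=\varphi(0)$ for $s<0$ and replaces $\int_\eps^T\varphi(s-\eps)x(s)\,ds$ by $\int_0^T\varphi(s-\eps)x(s)\,ds$, a substitution which changes the quantity under study by $\frac{\varphi(0)}{\eps}\int_0^\eps x(s)\,ds\to\varphi(0)x(0)$, i.e.\ by precisely the term your bookkeeping makes visible. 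Whichever presentation you adopt, the fate of this boundary term must be addressed explicitly rather than dismissed.
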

\begin{proof}[\textbf{Proof.}]
We have
\[
\int_0^T \varphi(s)\frac{x(T\wedge(s+\eps))-x(s)}{\eps}ds = \int_0^{T-\eps} \varphi(s)\frac{x(s+\eps)-x(s)}{\eps}ds + \int_{T-\eps}^T \varphi(s)\frac{x(T)-x(s)}{\eps}ds
\]
Notice that
\[
\int_0^{T-\eps} \varphi(s)x(s+\eps)\,ds \ = \ \int_\eps^T \varphi(s-\eps)x(s)\,ds \ = \ \int_0^T \varphi(s-\eps)x(s)\,ds,
\]
where in the last equality we set $\varphi(s):=0$ for $s<0$. Hence
\begin{align*}
&\int_0^T \varphi(s)\,\frac{x(T\wedge(s+\eps))-x(s)}{\eps}\,ds \\
&= \ \int_0^T \frac{\varphi(s-\eps)-\varphi(s)}{\eps}x(s)\,ds + \frac{1}{\eps}\int_{T-\eps}^T \varphi(s)x(s)\,ds  + \frac{1}{\eps}\int_{T-\eps}^T \varphi(s)\big(x(T)-x(s)\big)\,ds \\
&= \ \int_0^T \frac{\varphi(s-\eps)-\varphi(s)}{\eps}x(s)\,ds + \frac{1}{\eps}\int_{T-\eps}^T \varphi(s)x(T)\,ds.
\end{align*}
Since $\varphi$ is c\`{a}dl\`{a}g, we have
\begin{equation}\label{IbP_1}
\frac{1}{\eps}\int_{T-\eps}^T \varphi(s)x(T)\,ds \ \overset{\eps\rightarrow0^+}{\longrightarrow} \ \varphi(T-)x(T). \qquad\qquad
\end{equation}
On the other hand, by Fubini's theorem, we have
\begin{align*}
&\int_0^T \frac{\varphi(s-\eps)-\varphi(s)}{\eps}x(s)\,ds \ = \ -\int_0^T \bigg(\frac{1}{\eps}\int_{(s-\eps,s]}d\varphi(r)\bigg)x(s)\,ds \\
&= \ - \int_{(-\eps,T]}\bigg(\frac{1}{\eps}\int_r^{r+\eps} x(s)\,ds\bigg)d\varphi(r) \ = \ - \int_{(-\eps,0]}\bigg(\frac{1}{\eps}\int_r^{r+\eps} x(s)\,ds\bigg)d\varphi(r) - \int_{(0,T]}\bigg(\frac{1}{\eps}\int_r^{r+\eps} x(s)\,ds\bigg)d\varphi(r) \\
&= \ - \varphi(0)\frac{1}{\eps}\int_0^\eps x(s)\,ds - \int_{(0,T]}\bigg(\frac{1}{\eps}\int_r^{r+\eps} x(s)\,ds\bigg)d\varphi(r).
\end{align*}
Since $x$ is right-continuous we have that $\frac{1}{\eps}\int_r^{r+\eps}x(s)\,ds\rightarrow x(r)$ as $\eps\rightarrow0^+$. Moreover, since $x$ is bounded (being a c\`{a}dl\`{a}g function), by Lebesgue's dominated convergence theorem we conclude that
\[
\int_{(0,T]}\bigg(\frac{1}{\eps}\int_r^{r+\eps} x(s)\,ds\bigg)d\varphi(r) \ \overset{\eps\rightarrow0^+}{\longrightarrow} \ \int_{(0,T]}x(r)\,d\varphi(r).
\]
Hence
\begin{equation}\label{IbP_2}
- \varphi(0)\frac{1}{\eps}\int_0^\eps x(s)\,ds - \int_{(0,T]}\bigg(\frac{1}{\eps}\int_r^{r+\eps} x(s)\,ds\bigg)d\varphi(r) \ \overset{\eps\rightarrow0^+}{\longrightarrow} \ - \varphi(0)x(0) - \int_{(0,T]}x(r)d\varphi(r).
\end{equation}
From \eqref{IbP_1} and \eqref{IbP_2} we see that \eqref{IntbyParts} follows.
\end{proof}

\subsection{Cylindrical approximations}

\begin{Lemma}\label{L:CylindrApprox1}
Let $h\colon[0,T]\times C([0,T];\R^d)\times A\rightarrow\R$ be continuous and satisfying, for some constant $K\geq0$, items \textup{a)}-\textup{b)}-\textup{d)} or, alternatively, items \textup{a)}-\textup{c)}-\textup{d):}
\begin{enumerate}[\upshape a)]
\item $|h(t,x,a) - h(t,x',a)|\leq K\,\|x-x'\|_t$, for all $t\in[0,T]$, $x,x'\in C([0,T];\R^d)$, $a\in A$;
\item $|h(t,0,a)|\leq K$, for all $(t,a)\in[0,T]\times A$;
\item $|h(t,x,a)|\leq K$, for all $(t,x,a)\in[0,T]\times C([0,T];\R^d)\times A$;
\item $h$ is uniformly continuous in $t$, uniformly with respect to the other variables, namely there exists a modulus of continuity $w\colon[0,\infty)\rightarrow[0,\infty)$ such that
\begin{equation}\label{h_UnifCont}
|h(t,x,a) - h(s,x,a)| \ \leq \ w(|t - s|),
\end{equation}
for all $t,s\in[0,T],x\in C([0,T];\R^d),a\in A$.
\end{enumerate}
Then, there exists a sequence $\{h_n\}_n$ with $h_n\colon[0,T]\times C([0,T];\R^d)\times A\rightarrow\R$ continuous and satisfying the following.
\begin{enumerate}[\upshape 1)]
\item $h_n$ converges pointwise to $h$ uniformly with respect to $a$, namely: for every $(t,x)\in[0,T]\times C([0,T];\R^d)$, it holds that
\[
\lim_{n\rightarrow+\infty}\sup_{a\in A}\big|h_n(t,x,a) - h(t,x,a)\big| \ = \ 0.
\]
More precisely, \eqref{h_n-h_proof5} holds.
\item If $h$ satisfies items \textup{a)} and \textup{b)} $($resp. \textup{a)} and \textup{c)}$)$ then $h_n$ also satisfies the same items. In particular, $h_n$ satisfies item \textup{a)} with constant $2K$ and item \textup{b)} with a constant $\check K\geq0$, depending only on $K$ $($resp. item \textup{c)} with the same constant $K$$)$.
\item For every $n$, there exist $d_n\in\N$, a continuous function $\bar h_n\colon[0,T]\times\R^{dd_n}\times A\rightarrow\R$, and some continuously differentiable functions $\phi_{n,1},\ldots,\phi_{n,d_n}\colon[0,T]\rightarrow\R$ such that
\[
h_n(t,x,a) \ = \ \bar h_n\bigg(t,\int_{[0,t]}\phi_{n,1}(s)\,d^-x(s)+ x(0),\ldots,\int_{[0,t]}\phi_{n,d_n}(s)\,d^-x(s)+ x(0),a\bigg),
\]
for every $(t,x,a)\in[0,T]\times C([0,T];\R^d)\times A$. Moreover, $d_n$ and $\phi_{n,1},\ldots,\phi_{n,d_n}$ do not depend on $h$. In addition, $y_n^{t,x}$ given by \eqref{y_n^t,x} is such that \eqref{Dy-x1} and \eqref{Dy-x2} hold.
\item If $h$ satisfies items \textup{a)} and \textup{b)} $($resp. \textup{a)} and \textup{c)}$)$ then $\bar h_n$ satisfies items \textup{i)} and \textup{ii)} $($resp. \textup{i)} and \textup{iii)}$)$ below:
\begin{enumerate}[\upshape i)]
\item $|\bar h_n(t,y,a)-\bar h_n(t,y',a)|\leq K\,|y-y'|$, for all $t\in[0,T]$, $y,y'\in\R^{dd_n}$, $a\in A$;
\item $|\bar h_n(t,0,a)|\leq \check K$, for all $(t,a)\in[0,T]\times A$, for some constant $\check K\geq0$, depending only on $K$;
\item $|\bar h_n(t,y,a)|\leq K$, for all $(t,y,a)\in[0,T]\times\R^{dd_n}\times A$.
\end{enumerate}
\item For every $n$ and any $a\in A$, the function $\bar h_n(\cdot,\cdot,a)$ is $C^{1,2}([0,T]\times\R^{dd_n})$. Moreover, there exist constants $K_n\geq0$ and $q\in\{0,1\}$ such that
\begin{equation}\label{Estimate2ndDeriv}
\big|\partial_t\bar h_n(t,y,a)\big| + \big|\partial_y\bar h_n(t,y,a)\big| + \big|\partial_{yy}^2\bar h_n(t,y,a)\big| \ \leq \ K_n\,\big(1 + |y|\big)^q,
\end{equation}
for all $(t,y,a)\in[0,T]\times\R^{dd_n}\times A$. The constant $q$ is equal to $1$ if $h$ satisfies item \textup{b)}, while it is equal to $0$ if $h$ satisfies item \textup{c)}.
\end{enumerate}
\end{Lemma}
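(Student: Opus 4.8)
The plan is to build the approximating sequence in two stages: a \emph{cylindrification} of $h$ obtained by reading off from finitely many smooth kernel-integrals an approximate reconstruction of the path, followed by a \emph{mollification} in the resulting finite-dimensional variable (and in time) which yields the $C^{1,2}$ regularity of $\bar h_n$.

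\textbf{Step 1 (kernels and reconstruction).} Fix $n$, put $N:=2^n$ and consider the dyadic mesh $t_j^n:=jT/N$, $j=0,\dots,N$, of $[0,T]$. Choose a smooth partition of unity $(\psi_{n,j})_j$ subordinate to this mesh and set $\phi_{n,j}(s):=\int_s^T\psi_{n,j}(r)\,dr$; these are continuously differentiable and, crucially, depend only on the mesh, not on $h$, so they will serve as the kernels of item~3), with $d_n$ (of order $2^n$) their number. For $x\in C([0,T];\R^d)$ and $t\in[0,T]$ let $y_n^{t,x}$ be as in \eqref{y_n^t,x}, i.e.\ the vector of forward integrals $\int_{[0,t]}\phi_{n,j}(s)\,d^-x(s)$. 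Since each $\phi_{n,j}$ is $C^1$, the integration by parts formula \eqref{IntbyParts} of Proposition~\ref{P:IntbyParts} applies and gives $\int_{[0,t]}\phi_{n,j}(s)\,d^-x(s)=\phi_{n,j}(t)x(t)-\int_0^t x(s)\phi_{n,j}'(s)\,ds$; this shows at once that $x\mapsto y_n^{t,x}$ is affine, jointly continuous in $(t,x)$, and that each component of $y_n^{t,x}$ is controlled linearly by $\|x\|_t$ uniformly in $n$, which is \eqref{Dy-x1}. One then exhibits a Lipschitz reconstruction map $\Lambda_n\colon\R^{dd_n}\to C([0,T];\R^d)$ — built by extracting from $y_n^{t,x}$ approximate weighted averages of $x$ over the mesh intervals of $[0,t]$ and interpolating smoothly — such that $\Lambda_n(y_n^{t,x})$ approximates $x_{\cdot\wedge t}$ in sup-norm with an explicit, uniform-in-$x$ rate of the form ``$\sup_{|r-s|\le 2^{-n}}|x(r\wedge t)-x(s\wedge t)|$ plus an $O(2^{-n/2})$ term''; this is \eqref{Dy-x2}, and it is exactly the source of the corresponding terms that appear in \eqref{h_n-h_proof5}.

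\textbf{Step 2 (cylindrification and mollification).} Define $\tilde h_n(t,y,a):=h(t,\Lambda_n(y),a)$. It is continuous, it inherits from item~a) Lipschitz continuity in $y$ (with a constant which, after composition with $y_n^{t,\cdot}$, can be forced to be $\le 2K$ by taking the mesh fine), and it inherits the global bound of item~c), resp.\ the bound at the origin of item~b) through $\Lambda_n(0)$. To gain smoothness, extend $h$ in the time variable to all of $\R$ by successive reflection at $0$ and $T$, convolve $\tilde h_n$ with a smooth mollifier in $(t,y)\in\R\times\R^{dd_n}$ of bandwidth $\mu_n\downarrow 0$, and restrict back to $[0,T]\times\R^{dd_n}$; call $\bar h_n$ the result. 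Then $\bar h_n(\cdot,\cdot,a)$ is smooth, in particular it belongs to $C^{1,2}([0,T]\times\R^{dd_n})$, and since convolution preserves Lipschitz constants (and using item~d) for the $t$-derivative) one obtains derivative bounds of the form \eqref{Estimate2ndDeriv} with $K_n$ depending on $n$ (through $\mu_n$ and $\mathrm{Lip}(\Lambda_n)$) and with $q=0$ when $h$ is globally bounded (item~c)) and $q=1$ when $h$ only satisfies the bound at the origin (item~b)), the factor $(1+|y|)$ absorbing the linear growth of $h$ in that case. Finally set $h_n(t,x,a):=\bar h_n(t,y_n^{t,x},a)$: it is continuous, non-anticipative (because $y_n^{t,x}$ only involves $x$ on $[0,t]$), and has the cylindrical form of item~3) with $d_n,\phi_{n,1},\dots,\phi_{n,d_n}$ independent of $h$. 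Items~2), 4), 5) then follow by propagating the constants through the two steps — the factor $2K$ in item~2) leaving room for the composition and mollification errors, and $\check K$ in items~b)/ii) depending only on $K$ through $\|\Lambda_n(0)\|_T$.

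\textbf{Step 3 (convergence) and main obstacle.} For item~1) and \eqref{h_n-h_proof5} one writes, for fixed $(t,x)$ and uniformly in $a$,
\[
|h_n(t,x,a)-h(t,x,a)|\ \le\ \big|\bar h_n(t,y_n^{t,x},a)-\tilde h_n(t,y_n^{t,x},a)\big|+\big|h(t,\Lambda_n(y_n^{t,x}),a)-h(t,x,a)\big|,
\]
the first term being the mollification error, made $\le 1/n$ by choosing $\mu_n$ small (allowed, since $\mu_n$ is chosen after $n$), and the second bounded via item~a) by $K\,\|\Lambda_n(y_n^{t,x})-x_{\cdot\wedge t}\|_T$, hence by the rate of Step~1; both tend to $0$. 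The crux of the whole argument is Step~1: constructing \emph{simultaneously and for every $n$} the $C^1$ kernels $\phi_{n,j}$ (independent of $h$), the linear control \eqref{Dy-x1} of $y_n^{t,x}$ by $\|x\|_t$ that is later needed in the comparison theorem, and a Lipschitz reconstruction $\Lambda_n$ together with an \emph{explicit} uniform-in-$x$ convergence rate $\Lambda_n(y_n^{t,x})\to x_{\cdot\wedge t}$ — all of which must be routed through the integration by parts formula \eqref{IntbyParts}, since the forward integral in Definition~\ref{D:DetInt} is itself only a limit and cannot be estimated directly. A secondary, purely bookkeeping difficulty is to keep the constant in item~2) equal to $2K$ and to respect the dichotomy $q\in\{0,1\}$ of item~5) according to whether $h$ is merely bounded at the origin (item~b)) or globally bounded (item~c)), which dictates how the mollification bandwidth $\mu_n$ and the time-extension must be handled.
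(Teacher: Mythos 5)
Your overall architecture (finite-dimensional readouts of the path via forward integrals against $C^1$ kernels, a reconstruction of the stopped path from those readouts, composition with $h$, then mollification in $(t,y)$ to gain $C^{1,2}$ regularity) is the same as the paper's, but the proposal leaves unproven exactly the step you yourself identify as ``the crux'': the simultaneous construction of the kernels $\phi_{n,j}$ and of the reconstruction map, together with the quantitative, uniform-in-$x$ estimates. In the paper this is done explicitly: the kernels are smoothed indicators of $[0,t_j^n]$ decaying over an interval of length $2^{-2n}$, the reconstruction is the piecewise-linear interpolation $x_{n,y}^{\textup{pol}}$ through the components of $y$ at the dyadic nodes, and the whole lemma hinges on the integration-by-parts computation showing that each component of $y_n^{t,x}$ in \eqref{y_n^t,x} equals $x(t_j^n\wedge t)$ exactly when $t\leq t_j^n$ (\eqref{Dy-x1}) and differs from it by at most twice the oscillation of $x$ at scale $2^{-n}$ otherwise (\eqref{Dy-x2}), the case analysis being needed precisely in the transition window where $t$ sits inside the decay region of $\phi_{n,j}$. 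Asserting the existence of a Lipschitz $\Lambda_n$ with ``an explicit uniform-in-$x$ rate'' is not a proof of this; it is the content of the lemma, and in particular the non-anticipative consistency (the readout at time $t$ must approximate $x_{\cdot\wedge t}$, including its constancy after $t$, for every $t$ with the same kernels) is exactly what the two-case computation settles.

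Two of the quantitative claims you do make are also off. First, item 4)-i) requires the $y$-Lipschitz constant of $\bar h_n$ to be the original $K$ (and item 2) the constant $2K$ for $h_n$); in the paper these come, respectively, from the fact that the polygonal reconstruction is $1$-Lipschitz from the Euclidean norm to the sup norm (\eqref{Estimate_y}) and from the integration-by-parts bound $|(y_n^{t,x})_j-(y_n^{t,\tilde x})_j|\leq 2\|x-\tilde x\|_t$, where the factor $2$ is the endpoint term plus the total variation of $\phi_{n,j}$ — not ``room left for composition and mollification errors'' (mollification by your Gaussian preserves Lipschitz constants exactly), and not something obtained ``by taking the mesh fine'': with partition-of-unity kernels $\phi_{n,j}(s)=\int_s^T\psi_{n,j}$, whose masses are of order $2^{-n}$, refining the mesh makes the reconstruction constant blow up unless you renormalize, which you do not address. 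Second, \eqref{Dy-x1} is not the linear bound $|(y_n^{t,x})_j|\leq c\|x\|_t$ (that is a consequence used later in the comparison theorem); and the precise form of \eqref{h_n-h_proof5}, which item 1) explicitly requires and which is used verbatim in the proofs of Theorem \ref{T:Comparison} and Lemma \ref{L:CylindrApprox2}, is not recovered by a generic ``bandwidth $\mu_n$ chosen so the mollification error is $\leq 1/n$'': the paper's time-smoothing is one-sided (forward, capped at $T$), which is what produces the explicit term involving the modulus $w$ without extending $h$ by reflection. So the plan is the right one, but the central estimates and the bookkeeping that the rest of the paper relies on are missing.
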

\begin{proof}[\textbf{Proof.}] We split the proof into six steps.

\vspace{1mm}

\noindent\emph{\textbf{Step I. Definitions of} $x_{n,y}^{\textup{pol}}$ \textbf{and} $x_n^{t,\textup{pol}}$.} For every $n\in\N$, consider the $n$-th dyadic mesh of the time interval $[0,T]$, that is
\[
0 \ = \ t_0^n \ < \ t_1^n \ < \ \ldots \ < \ t_{2^n}^n \ = \ T, \qquad\quad \text{with }t_j^n \ := \ \frac{j}{2^n}T,\text{ for every }j=0,\ldots,2^n.
\]
For every $y=(y_0,\ldots,y_{2^n})\in\R^{d\cdot(2^n+1)}$, we consider the corresponding $n$-th polygonal, denoted $x_{n,y}^{\textup{pol}}$, which is an element of $C([0,T];\R^d)$ and is characterized by the following properties:
\begin{itemize}
\item $x_{n,y}^{\textup{pol}}(t_n^j)=y_j$, for every $j=0,\ldots,2^n$;
\item $x_{n,y}^{\textup{pol}}$ is linear on every interval $[t_{j-1}^n,t_j^n]$, for any $j=1,\ldots,2^n$.
\end{itemize}
So, in particular, $x_{n,y}^{\textup{pol}}$ is given by the following formula:
\[
x_{n,y}^{\textup{pol}}(s) \ = \ \frac{y_j - y_{j-1}}{t_j^n - t_{j-1}^n} s + \frac{t_j^n\,y_{j-1} - t_{j-1}^n\,y_j}{t_j^n - t_{j-1}^n},
\]
for every $s\in[t_{j-1}^n,t_j^n]$ and any $j=1,\ldots,2^n$.\\
Now, given $t\in[0,T]$ and $x\in C([0,T];\R^d)$, we denote
\[
x_n^{t,\textup{pol}} \ := \ x_{n,\hat y_n^{t,x}}^{\textup{pol}},
\]
with
\[
\hat y_n^{t,x} \ := \ \big(x(t_0^n\wedge t),\ldots,x(t_{2^n}^n\wedge t)\big) \ = \ \bigg(\int_{[0,t]} 1_{[0,t_0^n)}(s)\,d^-x(s)+x(0),\ldots,\int_{[0,t]} 1_{[0,t_{2^n}^n)}(s)\,d^-x(s)+x(0)\bigg),
\]
where the second equality follows from the integration by parts formula \eqref{IntbyParts}.\\
It is easy to see that (in the following formulae we use the same symbol, that is $|\cdot|$, to denote the Euclidean norms on $\R^d$ and $\R^{d\cdot(2^n+1)}$)
\begin{equation}\label{Estimate_y}
\|x_{n,y}^{\textup{pol}}\|_T \ \leq \ \max_j |y_j| \ \leq \ |y|, \qquad\quad \|x_{n,y}^{\textup{pol}} - x_{n,\tilde y}^{\textup{pol}}\|_T \ \leq \ \max_j |y_j - \tilde y_j| \ \leq \ |y - \tilde y|,
\end{equation}
for every $y=(y_0,\ldots,y_{2^n}),\tilde y=(\tilde y_0,\ldots,\tilde y_{2^n})\in\R^{d\cdot(2^n+1)}$. Similarly, we have
\begin{equation}\label{estimate_x_pol-x}
\|x_n^{t,\textup{pol}}\|_t \leq \|x\|_t, \qquad \|x_n^{t,\textup{pol}} - x\|_t \leq \sup_{|r-s|\leq2^{-n}} |x(r\wedge t) - x(s\wedge t)| \leq \sup_{|r-s|\leq2^{-n}} |x(r) - x(s)|.
\end{equation}

\vspace{1mm}

\noindent\emph{\textbf{Step II. Definitions of $\phi_{n,j}$ and $y_n^{t,x}$}.} Let $\chi\colon\R\rightarrow\R$ be given by
\[
\chi(r) \ = \
\begin{cases}
\vspace{1mm}0, \qquad & r\leq0, \\
\vspace{1mm}\dfrac{1}{1+\frac{\textup{e}^{1/r}}{\textup{e}^{1/(1-r)}}}, \qquad & 0<r<1, \\
1, \qquad & r\geq1.
\end{cases}
\]
Notice that $\chi$ belongs to $C^\infty(\R)$ and is strictly increasing on $[0,1]$. Then, for every $n\in\N$, we define the functions $\phi_{n,0},\ldots,\phi_{n,2^n}\colon[0,T]\rightarrow\R$ as follows:
\begin{equation}\label{phi_n,j}
\phi_{n,j}(r) \ = \
\begin{cases}
\vspace{1mm}1, \qquad & \qquad 0\leq r<t_j^n, \\
1 - \chi(2^{2n} (r - t_j^n)), \qquad & \qquad t_j^n\leq r\leq T,
\end{cases}
\end{equation}
for $j=0,\ldots,2^n$; so, in particular,
\[
\phi_{n,0}(r) \ = \ 1 - \chi(2^{2n}r), \qquad\qquad 0\leq r\leq T.
\]
Moreover, for every $(t,x)\in[0,T]\times C([0,T];\R^d)$, let $y_n^{t,x}\in\R^{d\cdot(2^n+1)}$ be defined as
\begin{equation}\label{y_n^t,x}
y_n^{t,x} := \bigg(\int_{[0,t]}\phi_{n,0}(s)\,d^-x(s)+x(0),\ldots,\int_{[0,t]}\phi_{n,2^n}(s)\,d^-x(s)+x(0)\bigg).
\end{equation}
In the rest of this step we prove that the following estimate holds:
\begin{equation}\label{Dy-x}
\big\|x_{n,y_n^{t,x}}^{\textup{pol}} - x_n^{t,\textup{pol}}\big\|_t \ \leq \ 2\sup_{|r-s|\leq2^{-2n}} |x(r\wedge t) - x(s\wedge t)| \ \leq \ 2\sup_{|r-s|\leq2^{-n}} |x(r) - x(s)|.
\end{equation}
We begin noting that
\[
\big\|x_{n,y_n^{t,x}}^{\textup{pol}} - x_n^{t,\textup{pol}}\big\|_t \ = \ \max_{j=0,\ldots,2^n}\big|(y_n^{t,x})_j - x(t_j^n\wedge t)\big|.
\]
where $y_n^{t,x}=((y_n^{t,x})_0,\ldots,(y_n^{t,x})_{2^n})\in\R^{d\cdot(2^n+1)}$. By the integration by parts formula \eqref{IntbyParts}, we have
\begin{align*}
&(y_n^{t,x})_j - x(t_j^n\wedge t) \ = \ \int_{[0,t]}\phi_{n,j}(s)\,d^-x(s) + x(0) - x(t_j^n\wedge t) \\
&= \ \int_{[0,(t_j^n+2^{-2n})\wedge t]}\phi_{n,j}(s)\,d^-x(s) + x(0) - x(t_j^n\wedge t) \\
&= \ \phi_{n,j}((t_j^n+2^{-2n})\wedge t)\,x((t_j^n+2^{-2n})\wedge t) - x(t_j^n\wedge t) - \int_0^{(t_j^n+2^{-2n})\wedge t} x(s)\,\phi_{n,j}'(s)\,ds.
\end{align*}
Notice that $\phi_{n,j}(s)=1$ and $\phi_{n,j}'(s)=0$, for $0\leq s\leq t_j^n$. Now, we distinguish two cases.
\begin{enumerate}
\item If $t\leq t_j^n$, we have
\begin{equation}\label{Dy-x1}
(y_n^{t,x})_j - x(t_j^n\wedge t) \ = \ 0.
\end{equation}
\item If $t>t_j^n$, we have
\begin{align*}
(y_n^{t,x})_j - x(t_j^n\wedge t) \ &= \ \phi_{n,j}((t_j^n+2^{-2n})\wedge t)\,x((t_j^n+2^{-2n})\wedge t) - x(t_j^n) \\
&\quad \ - \int_{t_j^n}^{(t_j^n+2^{-2n})\wedge t} x(s)\,\phi_{n,j}'(s)\,ds.
\end{align*}
Observe that $\int_{t_j^n}^{(t_j^n+2^{-2n})\wedge t} \phi_{n,j}'(s)\,ds=\phi_{n,j}((t_j^n+2^{-2n})\wedge t)-1$, then
\begin{align*}
(y_n^{t,x})_j - x(t_j^n\wedge t) \ &= \ x((t_j^n+2^{-2n})\wedge t) - x(t_j^n) \\
&\quad \ + \int_{t_j^n}^{(t_j^n+2^{-2n})\wedge t} \big(x((t_j^n+2^{-2n})\wedge t) - x(s)\big)\,\phi_{n,j}'(s)\,ds \\
&\leq \ \sup_{|r-s|\leq2^{-2n}} |x(r\wedge t) - x(s\wedge t)| \bigg(1 + \int_{t_j^n}^{(t_j^n+2^{-2n})\wedge t} \big|\phi_{n,j}'(s)\big|\,ds\bigg) \\
&\leq \ \sup_{|r-s|\leq2^{-n}} |x(r\wedge t) - x(s\wedge t)| \bigg(1 + \int_{t_j^n}^{(t_j^n+2^{-2n})\wedge t} \big|\phi_{n,j}'(s)\big|\,ds\bigg).
\end{align*}
Since $\int_{t_j^n}^{(t_j^n+2^{-2n})\wedge t} |\phi_{n,j}'(s)|\,ds=-\int_{t_j^n}^{(t_j^n+2^{-2n})\wedge t} \phi_{n,j}'(s)\,ds=1-\phi_{n,j}((t_j^n+2^{-2n})\wedge t)$ and $1-\phi_{n,j}((t_j^n+2^{-2n})\wedge t)\leq1$, we get
\begin{equation}\label{Dy-x2}
\big|(y_n^{t,x})_j - x(t_j^n\wedge t)\big| \ \leq \ 2\sup_{|r-s|\leq2^{-n}} |x(r\wedge t) - x(s\wedge t)| \ \leq \ 2\sup_{|r-s|\leq2^{-n}} |x(r) - x(s)|.
\end{equation}
\end{enumerate}
From \eqref{Dy-x1} and \eqref{Dy-x2} we conclude that \eqref{Dy-x} holds.

\vspace{1mm}

\noindent\emph{\textbf{Step III. Definitions of $\bar h_n,h_n$ and proof of item 3)}.} For every $n\in\N$, let $\eta_n\colon\R\rightarrow\R$ be given by
\[
\eta_n(s) \ = \ \frac{2n}{\sqrt{2\pi}}\,\textup{e}^{-\frac{n^2}{2}s^2}, \qquad \forall\,s\in\R.
\]
Notice that $\int_0^\infty\eta_n(s)ds=1$. Moreover, for every $n\in\N$, let $\zeta_n\colon\R^{d\cdot(2^n+1)}\rightarrow\R$ be the probability density function of the multivariate normal distribution $\Nc(0,(d(2^n+1))^{-2}\,I_{d(2^n+1)})$, where $I_{d(2^n+1)}$ denotes the identity matrix of order $d(2^n+1)$:
\[
\zeta_n(z) \ = \ \frac{(d(2^n+1))^{d(2^n+1)}}{(2\pi)^{d(2^n+1)/2}}\,\textup{e}^{-\frac{(d(2^n+1))^2}{2}|z|^2}, \qquad \forall\,z\in\R^{d\cdot(2^n+1)}.
\]
Now, define $\bar h_n\colon[0,T]\times\R^{d\cdot(2^n+1)}\times A\rightarrow\R$ as follows:
\[
\bar h_n(t,y,a) \ = \ \int_0^\infty \int_{\R^{d\cdot(2^n+1)}} \eta_n(s)\,\zeta_n(z)\,h\big((t+s)\wedge T,x_{n,y+z}^{\textup{pol}},a\big)\,ds\,dz,
\]
for all $(t,y,a)\in[0,T]\times\R^{d\cdot(2^n+1)}\times A$. Finally, let $h_n\colon[0,T]\times C([0,T];\R^d)\times A\rightarrow\R$ be given by
\[
h_n(t,x,a) \ = \ \bar h_n\bigg(t,\int_{[0,t]}\phi_{n,0}(s)\,d^-x(s)+x(0),\ldots,\int_{[0,t]}\phi_{n,2^n}(s)\,d^-x(s)+x(0),a\bigg),
\]
with $\phi_{n,j}$ as in \eqref{phi_n,j}. From the continuity of $h$, we see that both $h_n$ and $\bar h_n$ are continuous.

\vspace{1mm}

\noindent\emph{\textbf{Step IV. Proof of item 1)}.} For every $(t,x)\in[0,T]\times C([0,T];\R^d)$, let $y_n^{t,x}\in\R^{d\cdot(2^n+1)}$ be given by \eqref{y_n^t,x}. Then, we have (using also the equality $h(t,x,a)=h(t,x_{\cdot\wedge t},a)$)
\begin{align*}
&|h_n(t,x,a) - h(t,x,a)| \\
&\leq \ \int_0^\infty \int_{\R^{d\cdot(2^n+1)}} \eta_n(s)\,\zeta_n(z)\,\big|h\big((t+s)\wedge T,x_{n,y_n^{t,x}+z}^{\textup{pol}},a\big) - h(t,x_{\cdot\wedge t},a)\big|\,ds\,dz \\
&\leq \ \int_0^\infty \int_{\R^{d\cdot(2^n+1)}} \eta_n(s)\,\zeta_n(z)\,\big|h\big((t+s)\wedge T,x_{n,y_n^{t,x}+z}^{\textup{pol}},a\big) - h\big((t+s)\wedge T,x_{\cdot\wedge t},a\big)\big|\,ds\,dz \\
&\quad \ + \int_0^\infty \eta_n(s)\,\big|h\big((t+s)\wedge T,x_{\cdot\wedge t},a\big) - h(t,x_{\cdot\wedge t},a)\big|\,ds \\
&\leq \ K\int_0^\infty \int_{\R^{d\cdot(2^n+1)}} \eta_n(s)\,\zeta_n(z)\,\big\|x_{n,y_n^{t,x}+z}^{\textup{pol}} - x_{\cdot\wedge t}\big\|_{(t+s)\wedge T}\,ds\,dz \\
&\quad \ + \int_0^\infty \eta_n(s)\,\big|h\big((t+s)\wedge T,x_{\cdot\wedge t},a\big) - h(t,x_{\cdot\wedge t},a)\big|\,ds.
\end{align*}
Since both paths $x_{n,y_n^{t,x}+z}^{\textup{pol}}$ and $x_{\cdot\wedge t}$ are constant after time $t$, moreover recalling \eqref{Estimate_y}, \eqref{estimate_x_pol-x} and the linearity of the map $y\mapsto x_{n,y}^{\textup{pol}}$, we obtain
\begin{align*}
\big\|x_{n,y_n^{t,x}+z}^{\textup{pol}} - x_{\cdot\wedge t}\big\|_{(t+s)\wedge T} \ &= \ \big\|x_{n,y_n^{t,x}}^{\textup{pol}} + x_{n,z}^{\textup{pol}} - x_{\cdot\wedge t}\big\|_t \ \leq \ \big\|x_{n,y_n^{t,x}}^{\textup{pol}} - x_{\cdot\wedge t}\big\|_t + |z| \\
&\leq \ \big\|x_{n,y_n^{t,x}}^{\textup{pol}} - x_n^{\textup{pol}}\big\|_t + \|x_n^{\textup{pol}} - x\|_t + |z|.
\end{align*}
Then, by \eqref{estimate_x_pol-x} and \eqref{Dy-x}, we get
\begin{align}\label{h_n-h_proof2}
|h_n(t,x,a) - h(t,x,a)| \ &\leq \ 3K\sup_{|r-s|\leq2^{-n}} |x(r) - x(s)| + K\int_{\R^{d\cdot(2^n+1)}} \zeta_n(z)\,|z|\,dz \\
&\quad \ + \int_0^\infty \eta_n(s)\,\big|h\big((t+s)\wedge T,x_{\cdot\wedge t},a\big) - h(t,x_{\cdot\wedge t},a)\big|\,ds. \notag
\end{align}
Now, by \eqref{h_UnifCont} we have
\begin{align}\label{h_n-h_proof3}
&\int_0^\infty \eta_n(s)\,\big|h\big((t+s)\wedge T,x_{\cdot\wedge t},a\big) - h(t,x_{\cdot\wedge t},a)\big|\,ds \notag \\
&\leq \ \int_0^\infty \eta_n(s)\,w\big((t+s)\wedge T-t\big)\,ds \ = \ \int_0^\infty \frac{2}{\sqrt{2\pi}}\,\textup{e}^{-\frac{1}{2}r^2}\,w\big((t+r/n)\wedge T-t\big)\,dr.
\end{align}
Finally, consider the integral $\int_{\R^{d\cdot(2^n+1)}} \zeta_n(z)\,|z|\,dz$. Since the integrand is a radial function, it is more convenient to rewrite it in terms of spherical coordinates (see for instance \cite[Appendix C.3]{Evans}). In particular, denoting by $S_{d(2^n+1)-1}(R)$ the surface area of the sphere $\{|z|=R\}$, which is equal to $2\pi^{d(2^n+1)/2}R^{d(2^n+1)-1}/\Gamma(d(2^n+1)/2)$, with $\Gamma(\cdot)$ being the Gamma function, we get
\begin{align*}
&\int_{\R^{d\cdot(2^n+1)}} \zeta_n(z)\,|z|\,dz \ = \ \int_{\R^{d\cdot(2^n+1)}}\frac{(d(2^n+1))^{d(2^n+1)}}{(2\pi)^{d(2^n+1)/2}}\,\textup{e}^{-\frac{(d(2^n+1))^2}{2}|z|^2}\,|z|\,dz \notag \\
&= \ \frac{1}{d(2^n+1)}\int_{\R^{d\cdot(2^n+1)}} \frac{1}{(2\pi)^{d(2^n+1)/2}}\,\textup{e}^{-\frac{1}{2}|y|^2}\,|y|\,dy \notag \\
&= \ \frac{1}{d(2^n+1)}\frac{1}{(2\pi)^{(d(2^n+1)-1)/2}}\int_0^\infty \frac{1}{\sqrt{2\pi}}\,\textup{e}^{-\frac{1}{2}R^2}\,R\,S_{d(2^n+1)-1}(R)\,dR \notag \\
&= \ \frac{1}{d(2^n+1)}\frac{2\pi^{d(2^n+1)/2}}{(2\pi)^{(d(2^n+1)-1)/2}}\frac{1}{\Gamma(d(2^n+1)/2)}\int_0^\infty \frac{1}{\sqrt{2\pi}}\,\textup{e}^{-\frac{1}{2}R^2}\,R^{d(2^n+1)}\,dR.
\end{align*}
Since $\int_0^\infty\frac{1}{\sqrt{2\pi}}\textup{e}^{-\frac{1}{2}R^2}R^{d(2^n+1)}dR=2^{d(2^n+1)/2-1}\Gamma((d(2^n+1)+1)/2)/\sqrt{\pi}$, we find
\[
\int_{\R^{d\cdot(2^n+1)}} \zeta_n(z)\,|z|\,dz \ = \ \sqrt{2}\,\frac{\Gamma((d(2^n+1)+1)/2)}{d(2^n+1)\,\Gamma(d(2^n+1)/2)}.
\]
By \cite{Kershaw} we know that the following inequality holds:
\[
\frac{\Gamma(z+1/2)}{\Gamma(z)} \ \leq \ \bigg(z+\frac{\sqrt{3}}{2}\bigg)^{1/2} \ \leq \ 2\sqrt{z}, \qquad \forall\,z>\frac{1}{2}.
\]
Hence
\begin{equation}\label{h_n-h_proof4}
\int_{\R^{d\cdot(2^n+1)}} \zeta_n(z)\,|z|\,dz \ \leq \ \frac{2}{\sqrt{d(2^n+1)}}.
\end{equation}
In conclusion, plugging \eqref{h_n-h_proof3} and \eqref{h_n-h_proof4} into \eqref{h_n-h_proof2}, we obtain
\begin{align}\label{h_n-h_proof5}
|h_n(t,x,a) - h(t,x,a)| \ &\leq \ 3K\sup_{|r-s|\leq2^{-n}} |x(r) - x(s)| + \frac{2K}{\sqrt{d(2^n+1)}} \\
&\quad \ + \int_0^\infty \frac{2}{\sqrt{2\pi}}\,\textup{e}^{-\frac{1}{2}r^2}\,w\big((t+r/n)\wedge T-t\big)\,dr. \notag
\end{align}
Then, letting $n\rightarrow\infty$ in \eqref{h_n-h_proof5}, using Lebesgue's dominated convergence theorem, we deduce that item 1) holds.

\vspace{1mm}

\noindent\emph{\textbf{Step V. Proof of items 2) and 4).}} It is clear that $h_n$ (resp. $\bar h_n$) satisfies item c) (resp. 4)-iii)) with the same constant $K$. If $h$ satisfies item b) then $|h(t,x,a)|\leq K(1+\|x\|_t)$, therefore, by \eqref{Estimate_y},
\begin{align*}
|\bar h_n(t,y,a)| \ &\leq \ K\int_{\R^{d\cdot(2^n+1)}}\zeta_n(z)\,\big(1 + \big\|x_{n,y+z}^{\textup{pol}}\big\|_t\big)\,dz \\
&\leq \ K\,(1 + |y|) + K\int_{\R^{d\cdot(2^n+1)}}\zeta_n(z)\,|z|\,dz \ = \ K\,(1 + |y|) + \frac{2K}{\sqrt{d(2^n+1)}},
\end{align*}
where the last equality follows from \eqref{h_n-h_proof4}. Since $d(2^n+1)\geq1$, we get\[
|\bar h_n(t,y,a)| \ \leq \ K\,(1 + |y|) + 2\,K,
\]
which proves item 4)-ii). Concerning $h_n$, we have
\begin{align*}
|h_n(t,x,a)| \ &\leq \ K\,\big(1 + \big\|y_n^{t,x}\big\|_t\big) + 2K\\
&\leq \ K\,\big(1 + \big\|y_n^{t,x} - x_n^{t,\textup{pol}}\big\|_t + \big\|x_n^{t,\textup{pol}}\big\|_t\big) + 2\,K
\end{align*}
By \eqref{Dy-x} and \eqref{estimate_x_pol-x}, we obtain
\begin{align*}
|h_n(t,x,a)| \ &\leq \ K\Big(1 + 2\sup_{|r-s|\leq2^{-n}} |x(r\wedge t) - x(s\wedge t)| + \|x\|_t\Big) + 2K \\
&\leq \ K\big(1 + 3\|x\|_t\big) + 2K,
\end{align*}
which proves that $h_n$ satisfies item b) with a constant $\check K$, depending only on $K$.\\
Let us now prove that $h_n$ and $\bar h_n$ satisfy respectively item a) and item 4)-i). We have
\begin{align*}
&|\bar h_n(t,y,a) - \bar h_n(t,\tilde y,a)| \\
&\leq \ \int_0^\infty \int_{\R^{d\cdot(2^n+1)}} \eta_n(s)\,\zeta_n(z)\,\big|h\big((t+s)\wedge T,x_{n,y+z}^{\textup{pol}},a\big) - h\big((t+s)\wedge T,x_{n,\tilde y+z}^{\textup{pol}},a\big)\big|\,ds\,dz \\
&\leq \ K\int_0^\infty \int_{\R^{d\cdot(2^n+1)}} \eta_n(s)\,\zeta_n(z)\,\big\|x_{n,y+z}^{\textup{pol}} - x_{n,\tilde y+z}^{\textup{pol}}\big\|_{(t+s)\wedge T}\,ds\,dz \\
&= \ K\int_0^\infty \eta_n(s)\,\big\|x_{n,y}^{\textup{pol}} - x_{n,\tilde y}^{\textup{pol}}\big\|_{(t+s)\wedge T}\,ds,
\end{align*}
where the last equality follows from the linearity of the map $y\mapsto x_{n,y}^{\textup{pol}}$. Hence, recalling \eqref{Estimate_y}, we obtain
\begin{equation}\label{Lipschitz_bar_h_n}
|\bar h_n(t,y,a) - \bar h_n(t,\tilde y,a)| \ \leq \ K\max_j|y_j - \tilde y_j| \ \leq \ K\,|y - \tilde y|,
\end{equation}
which proves that $\bar h_n$ satisfies item 4)-i). Let us now prove that $h_n$ satisfies item a). From \eqref{Lipschitz_bar_h_n} we have
\[
|h_n(t,x,a) - h_n(t,\tilde x,a)| \ \leq \ K\max_j\big|(y_n^{t,x})_j - (y_n^{t,\tilde x})_j\big|.
\]
By the integration by parts formula \eqref{IntbyParts}, we get
\begin{align*}
(y_n^{t,x})_j - (y_n^{t,\tilde x})_j \ &= \ \int_{[0,t]}\phi_{n,j}(s)\,d^-x(s) - \int_{[0,t]}\phi_{n,j}(s)\,d^- \tilde x(s) \\
&= \ \phi_{n,j}(t)\,\big(x(t) - \tilde x(t)\big) - \int_0^t \big(x(s) - \tilde x(s)\big)\,\phi_{n,j}'(s)\,ds.
\end{align*}
Hence
\[
\big|(y_n^{t,x})_j - (y_n^{t,\tilde x})_j\big| \ \leq \ |x(t) - \tilde x(t)| + \|x - \tilde x\|_t\int_0^t |\phi_{n,j}'(s)|\,ds.
\]
Since $\int_0^t |\phi_{n,j}'(s)|\,ds=-\int_0^t \phi_{n,j}'(s)\,ds=1-\phi_{n,j}(t)$ and $1-\phi_{n,j}(t)\leq1$, we conclude that
\[
|h_n(t,x,a) - h_n(t,\tilde x,a)| \ \leq \ 2K\|x - \tilde x\|_t,
\]
which proves that $h_n$ satisfies item a) with constant $2K$.

\vspace{1mm}

\noindent\emph{\textbf{Step VI. Proof of item 5)}.} Recall that
\begin{align*}
\bar h_n(t,y,a) \ &= \ \int_0^\infty \int_{\R^{d\cdot(2^n+1)}} \eta_n(s)\,\zeta_n(z)\,h\big((t+s)\wedge T,x_{n,y+z}^{\textup{pol}},a\big)\,ds\,dz \\
&= \ \int_t^\infty \int_{\R^{d\cdot(2^n+1)}} \eta_n(s-t)\,\zeta_n(z-y)\,h\big(s\wedge T,x_{n,z}^{\textup{pol}},a\big)\,ds\,dz,
\end{align*}
for all $(t,y,a)\in[0,T]\times\R^{d\cdot(2^n+1)}\times A$. Then, it is clear that, for every $a\in A$, the function $\bar h_n(\cdot,\cdot,a)\in C^{1,2}([0,T]\times\R^{dd_n})$. Moreover, by direct calculation, we have
\begin{align*}
\partial_t \bar h_n(t,y,a) \ &= \ - \int_t^\infty \int_{\R^{d\cdot(2^n+1)}} \eta_n'(s-t)\,\zeta_n(z-y)\,h\big(s\wedge T,x_{n,z}^{\textup{pol}},a\big)\,ds\,dz \\
&\quad \ - \int_{\R^{d\cdot(2^n+1)}} \eta_n(0)\,\zeta_n(z-y)\,h\big(t\wedge T,x_{n,z}^{\textup{pol}},a\big)\,dz, \\
\partial_y \bar h_n(t,y,a) \ &= \ - \int_t^\infty \int_{\R^{d\cdot(2^n+1)}} \eta_n(s-t)\,\partial_y\zeta_n(z-y)\,h\big(s\wedge T,x_{n,z}^{\textup{pol}},a\big)\,ds\,dz, \\
\partial_{yy}^2 \bar h_n(t,y,a) \ &= \ \int_t^\infty \int_{\R^{d\cdot(2^n+1)}} \eta_n(s-t)\,\partial_{yy}^2\zeta_n(z-y)\,h\big(s\wedge T,x_{n,z}^{\textup{pol}},a\big)\,ds\,dz.
\end{align*}
By item b) or, alternatively, item c), we conclude that \eqref{Estimate2ndDeriv} holds.
\end{proof}

\noindent Under Assumptions \ref{AssA} and \ref{AssB},
{the coefficients $b$, $f$, $g$ satisfy items a), c), d) of Lemma \ref{L:CylindrApprox1}, therefore from this lemma} we get sequences $\{b_n\}_n$, $\{f_n\}_n$, $\{g_n\}_n$, with
\begin{equation}\label{Coeff_n}
b_n,\,f_n\colon[0,T]\times C([0,T];\R^d)\times A \, \longrightarrow \, \R^d,\,\R, \qquad g_n\colon C([0,T];\R^d) \, \longrightarrow \, \R,
\end{equation}
satisfying items \textup{1)-2)-3)-4)-5)} of Lemma \ref{L:CylindrApprox1}. We also recall from Lemma \ref{L:CylindrApprox1} that $d_n$ and $\phi_{n,1},\ldots,\phi_{n,d_n}$ are the same for $b$, $f$, $g$.\\
Finally, let
\begin{equation}\label{Value_n}
v_n\colon[0,T]\times C([0,T];\R^d) \ \longrightarrow \ \R	
\end{equation}
denote the value function of the optimal control problem with coefficients $b_n$, $\sigma$, $f_n$, $g_n$.

\begin{Lemma}\label{L:CylindrApprox2}
Let Assumptions \ref{AssA}, \ref{AssB}, \ref{AssC} hold. Consider the sequences $\{b_n\}_n$, $\{f_n\}_n$, $\{g_n\}_n$, $\{v_n\}_n$ in \eqref{Coeff_n}-\eqref{Value_n}. Then, $v_n$ converges pointwise to $v$ in \eqref{Value} as $n\rightarrow+\infty$.
\end{Lemma}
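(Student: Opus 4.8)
The plan is to deduce the pointwise convergence from the elementary bound
\[
\big|v_n(t,x)-v(t,x)\big| \ \leq \ \sup_{\alpha\in\Ac}\big|J_n(t,x,\alpha)-J(t,x,\alpha)\big|, \qquad (t,x)\in[0,T]\times C([0,T];\R^d),
\]
where $J_n(t,x,\alpha):=\E[\int_t^T f_n(s,X_n^{t,x,\alpha},\alpha_s)\,ds+g_n(X_n^{t,x,\alpha})]$ and $X_n^{t,x,\alpha}$ is the solution of \eqref{SDE} with $b$ replaced by $b_n$. This equation is well posed and $v_n$ is well defined because, by item 2) of Lemma \ref{L:CylindrApprox1}, the coefficients $b_n,\sigma,f_n,g_n$ satisfy Assumption \ref{AssA} (with $2K$ in place of $K$), so that Propositions \ref{P:SDE} and \ref{P:Value} apply; the displayed bound is then immediate from $|\sup_\alpha a_\alpha-\sup_\alpha b_\alpha|\le\sup_\alpha|a_\alpha-b_\alpha|$. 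Hence the task reduces to showing that, for each fixed $(t,x)$, $\sup_{\alpha\in\Ac}|J_n(t,x,\alpha)-J(t,x,\alpha)|\to 0$ as $n\to+\infty$.

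First I would couple, for each $\alpha\in\Ac$, the processes $X:=X^{t,x,\alpha}$ and $X_n:=X_n^{t,x,\alpha}$, both driven by the same $B$ and both equal to $x$ on $[0,t]$, and prove the state-convergence estimate $\sup_{\alpha\in\Ac}\E[\sup_{s\in[0,T]}|X_s-X_{n,s}|^2]\to 0$. Writing $X_s-X_{n,s}$ for $s\ge t$ as the $dr$-integral of $b(r,X,\alpha_r)-b_n(r,X_n,\alpha_r)$ plus the It\^o integral of $\sigma(r,X,\alpha_r)-\sigma(r,X_n,\alpha_r)$, splitting the former as $[b(r,X,\alpha_r)-b(r,X_n,\alpha_r)]+[b(r,X_n,\alpha_r)-b_n(r,X_n,\alpha_r)]$, and using the Lipschitz continuity in $x$ of $b$ and $\sigma$ (Assumption \ref{AssA}\textup{-(ii)}; the analogous second bracket for $\sigma$ vanishes, $\sigma$ being unchanged) together with Burkholder--Davis--Gundy, one obtains a Gronwall-type inequality whose inhomogeneous term is a constant times $\E[\int_t^T\sup_{a\in A}|b(r,X_n,a)-b_n(r,X_n,a)|^2\,dr]$; by \eqref{h_n-h_proof5} applied to $h=b$ (with the modulus $w$ of Assumption \ref{AssB}) this is bounded by a constant times $\varpi_n+\tfrac{1}{d(2^n+1)}+\big(\int_0^\infty\tfrac{2}{\sqrt{2\pi}}\textup{e}^{-\rho^2/2}w(\rho/n)\,d\rho\big)^2$, where $\varpi_n:=\sup_{\alpha\in\Ac}\E\big[(\sup_{|r-r'|\le 2^{-n}}|X_n(r)-X_n(r')|)^2\big]$. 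Gronwall then gives the state-convergence estimate as soon as $\varpi_n\to 0$ and the integral term $\to 0$; the latter follows by dominated convergence ($w(\rho/n)\to 0$, $w(\rho/n)\le w(T)$), and $\varpi_n\to 0$ follows by splitting the path at $t$: on $[0,t]$ one uses the uniform continuity of the fixed path $x$, and on $[t,T]$ the classical Kolmogorov modulus-of-continuity estimate for It\^o processes whose coefficients $b_n,\sigma$ are bounded by a constant independent of $n$ (item 2) of Lemma \ref{L:CylindrApprox1} and Assumption \ref{AssA}\textup{-(ii)}).

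Finally I would bound, for each $\alpha\in\Ac$,
\begin{align*}
\big|J_n(t,x,\alpha)-J(t,x,\alpha)\big| \ &\leq \ \E\bigg[\int_t^T\big|f(s,X,\alpha_s)-f(s,X_n,\alpha_s)\big|\,ds\bigg] + \E\big[|g(X)-g(X_n)|\big] \\
&\quad + \E\bigg[\int_t^T\sup_{a\in A}\big|f(s,X_n,a)-f_n(s,X_n,a)\big|\,ds\bigg] + \E\Big[\sup_{a\in A}\big|g(X_n)-g_n(X_n)\big|\Big],
\end{align*}
and let $n\to+\infty$: the first two terms are $\le(KT+K)\,\E[\sup_{s}|X_s-X_{n,s}|^2]^{1/2}$ by the Lipschitz continuity in $x$ of $f$ and $g$, hence vanish uniformly in $\alpha$ by the previous step; the last two are controlled via \eqref{h_n-h_proof5} applied to $h=f$ (modulus $w$) and to $h=g$ (modulus $w\equiv 0$, since $g$ does not depend on $t$), hence vanish uniformly in $\alpha$ by $\varpi_n\to 0$ and dominated convergence. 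This yields $\sup_{\alpha\in\Ac}|J_n(t,x,\alpha)-J(t,x,\alpha)|\to 0$, and therefore $v_n(t,x)\to v(t,x)$. The only point beyond routine estimates is the uniform-in-$n$ oscillation bound $\varpi_n\to 0$, which however is nothing more than the standard Kolmogorov continuity estimate for It\^o processes, here uniform precisely because $b_n$ and $\sigma$ are bounded by a constant not depending on $n$.
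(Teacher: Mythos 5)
Your proof is correct and follows essentially the same route as the paper: reduce to $\sup_\alpha|J_n-J|$, control $\|X^{n,t,x,\alpha}-X^{t,x,\alpha}\|_{\mathbf S_2}$ by a Gronwall estimate whose forcing term is the coefficient difference, and kill that difference via \eqref{h_n-h_proof5} together with a uniform (in $n$ and $\alpha$) modulus-of-continuity estimate for the state process. The only cosmetic deviation is that you evaluate $b-b_n$, $f-f_n$, $g-g_n$ along the approximating process $X_n$ (hence invoke the uniform-in-$n$ bounds of Lemma \ref{L:CylindrApprox1}, item 2), whereas the paper evaluates them along $X$; both variants work equally well.
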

\begin{proof}[\textbf{Proof.}]
For every $n\in\N$, $t\in[0,T]$, $x\in C([0,T];\R^d)$, $\alpha\in\Ac$, let $X^{n,t,x,\alpha}\in\mathbf S_2(\F)$ be the unique solution to the following system of controlled stochastic differential equations:
\[
\begin{cases}
dX_s \ = \ b_n(s,X,\alpha_s)\,ds + \sigma(s,X,\alpha_s)\,dB_s, \qquad &\quad s\in(t,T], \\
X_s \ = \ x(s), &\quad s\in[0,t].
\end{cases}
\]
Then
\begin{align*}
&|v_n(t,x) - v(t,x)| \\
&\leq \ \sup_{\alpha\in\Ac}\E\bigg[\int_t^T \big|f_n\big(s,X^{n,t,x,\alpha},\alpha_s\big) - f\big(s,X^{t,x,\alpha},\alpha_s\big)\big|\,ds + \big|g_n\big(X^{n,t,x,\alpha}\big) - g\big(X^{t,x,\alpha}\big)\big|\bigg] \\
&\leq \ \sup_{\alpha\in\Ac}\bigg\{K\,(T + 1)\,\big\|X^{n,t,x,\alpha} - X^{t,x,\alpha}\big\|_{\mathbf S_2} + \E\bigg[\int_t^T \big|f_n\big(s,X^{t,x,\alpha},\alpha_s\big) - f\big(s,X^{t,x,\alpha},\alpha_s\big)\big|\,ds\bigg] \\
&\quad \ + \E\big[\big|g_n\big(X^{t,x,\alpha}\big) - g\big(X^{t,x,\alpha}\big)\big|\big]\bigg\}.
\end{align*}
By standard calculations (as for instance in \cite[Theorem 2.5.9]{Krylov80}), we find
\[
\big\|X^{n,t,x,\alpha} - X^{t,x,\alpha}\big\|_{\mathbf S_2}^2 \ \leq \ C_K T\textup{e}^{C_K T}\E\bigg[\int_t^T \big|b_n\big(s,X^{t,x,\alpha},\alpha_s\big) - b\big(s,X^{t,x,\alpha},\alpha_s\big)\big|^2\,ds\bigg].
\]
for some constant $C_K$, depending only on the constant $K$. It remains to prove that
\begin{align*}
&\sup_{\alpha\in\Ac}\bigg\{\E\bigg[\int_t^T \big|b_n\big(s,X^{t,x,\alpha},\alpha_s\big) - b\big(s,X^{t,x,\alpha},\alpha_s\big)\big|^2\,ds\bigg] \\
&+ \E\bigg[\int_t^T \big|f_n\big(s,X^{t,x,\alpha},\alpha_s\big) - f\big(s,X^{t,x,\alpha},\alpha_s\big)\big|\,ds\bigg] + \E\big[\big|g_n\big(X^{t,x,\alpha}\big) - g\big(X^{t,x,\alpha}\big)\big|\big]\bigg\} \ \overset{n\rightarrow+\infty}{\longrightarrow} \ 0.
\end{align*}
Let us address the term with $f$ and $f_n$, the other three terms can be treated in a similar way. From the proof of Lemma \ref{L:CylindrApprox1}, and in particular from estimate \eqref{h_n-h_proof5} with $h$ and $h_n$ replaced respectively by $f$ and $f_n$, we get
\begin{align}\label{LemmaB2Proof}
&\E\bigg[\int_t^T \big|f_n\big(s,X^{t,x,\alpha},\alpha_s\big) - f\big(s,X^{t,x,\alpha},\alpha_s\big)\big|\,ds\bigg] \ \leq \ 3KT\,\E\Big[\sup_{|r-s|\leq2^{-n}}\big|X_r^{t,x,\alpha} - X_s^{t,x,\alpha}\big|\Big] \notag \\
&+ \frac{2KT}{\sqrt{d(2^n+1)}} + \int_t^T\bigg(\int_0^\infty \frac{2}{\sqrt{2\pi}}\,\textup{e}^{-\frac{1}{2}r^2}\,w\big((s+r/n)\wedge T-s\big)\,dr\bigg)ds,
\end{align}
where we recall that $w$ is the modulus of continuity of $f$ with respect to the time variable. By Lebesgue's dominated convergence theorem, we see that the last integral in \eqref{LemmaB2Proof} goes to zero as $n\rightarrow+\infty$. Moreover, by standard calculations, we get
\[
\E\Big[\sup_{|r-s|\leq2^{-n}}\big|X_r^{t,x,\alpha} - X_s^{t,x,\alpha}\big|^2\Big] \ \leq \ \sup_{|r-s|\leq2^{-n}} |x(r\wedge t) - x(s\wedge t)|^2 + C_{K,T}\,2^{-n}\,\big(1 + \|x\|_t^2\big),
\]
for some constant $C_{K,T}\geq0$, depending only on $K$ and $T$. This allows to prove that right-hand side of \eqref{LemmaB2Proof} goes to zero as $n\rightarrow+\infty$ and concludes the proof.
\end{proof}

\begin{Lemma}\label{L:CylindrApprox3}
Let Assumptions \ref{AssA}, \ref{AssB}, \ref{AssC} hold. Suppose also that there exist $\hat d\in\N$ and functions
\[
\bar b,\;\bar\sigma,\;\bar f\colon[0,T]\times\R^{d\hat d}\times A \ \longrightarrow \ \R^d,\;\R^{d\times m},\;\R, \qquad\quad \bar g\colon\R^{d\hat d} \ \longrightarrow \ \R,
\]
satisfying the following conditions. $($Notice that items \textup{i)-ii)-iii)} below are not true assumptions for $\sigma$, since Assumption \ref{AssC} holds.
Indeed here we are only assuming, without loss of generality, that $\bar d$ from Assumption \ref{AssC} coincides with $\hat d$ and that the functions $\varphi_1,\ldots,\varphi_{\bar d}$ appearing in \ref{AssC}\textup{-(i)} coincide with $\phi_1,\ldots,\phi_{\hat d}$.$)$
\begin{enumerate}[\upshape i)]
\item There exist some continuously differentiable functions $\phi_1,\ldots,\phi_{\hat d}\colon[0,T]\rightarrow\R$ such that:
\begin{align*}
b(t,x,a) \ &= \ \bar b\bigg(t,\int_{[0,t]}\phi_1(s)\,d^-x(s)+x(0),\ldots,\int_{[0,t]}\phi_{\hat d}(s)\,d^-x(s)+x(0),a\bigg), \\
\sigma(t,x,a) \ &= \ \bar\sigma\bigg(t,\int_{[0,t]}\phi_1(s)\,d^-x(s)+x(0),\ldots,\int_{[0,t]}\phi_{\hat d}(s)\,d^-x(s)+x(0),a\bigg), \\
f(t,x,a) \ &= \ \bar f\bigg(t,\int_{[0,t]}\phi_1(s)\,d^-x(s)+x(0),\ldots,\int_{[0,t]}\phi_{\hat d}(s)\,d^-x(s)+x(0),a\bigg), \\
g(x) \ &= \ \bar g\bigg(\int_{[0,T]}\phi_1(s)\,d^-x(s)+x(0),\ldots,\int_{[0,T]}\phi_{\hat d}(s)\,d^-x(s)+x(0)\bigg),
\end{align*}
for every $(t,x,a)\in[0,T]\times C([0,T];\R^d)\times A$.
\item There exist a constant $\hat K\geq0$ such that
\begin{align*}
|\bar b(t,y,a) - \bar b(t,y',a)| + |\bar\sigma(t,y,a) - \bar\sigma(t,y',a)| \, + \ & \\
+ \, |\bar f(t,y,a) - \bar f(t,y',a)| + |\bar g(y) - \bar g(y')| \ &\leq \ \hat K\,|y - y'|, \\
|\bar b(t,y,a)| + |\bar\sigma(t,y,a)| + |\bar f(t,y,a)| + |\bar g(y)| \ &\leq \ \hat K,
\end{align*}
for all $(t,a)\in[0,T]\times A$, $y,y'\in\R^{d\hat d}$.
\item For every $a\in A$, the functions $\bar b(\cdot,\cdot,a),\bar\sigma(\cdot,\cdot,a),\bar f(\cdot,\cdot,a),\bar g(\cdot)$ are $C^{1,2}([0,T]\times\R^{d\hat d})$. Moreover, there exist constants $\bar K\geq0$ and $\bar q\geq0$ such that
\begin{align*}
\big|\partial_t\bar b(t,y,a)\big| + \big|\partial_t\bar\sigma(t,y,a)\big| + \big|\partial_t\bar f(t,y,a)\big| \ &\leq \ \bar K\,\big(1 + |y|\big)^{\bar q}, \\
\big|\partial_y\bar b(t,y,a)\big| + \big|\partial_y\bar\sigma(t,y,a)\big| + \big|\partial_y\bar f(t,y,a)\big| + \big|\partial_y\bar g(y)\big| \ &\leq \ \bar K\,\big(1 + |y|\big)^{\bar q}, \\
\big|\partial_{yy}^2\bar b(t,y,a)\big| + \big|\partial_{yy}^2\bar\sigma(t,y,a)\big| + \big|\partial_{yy}^2\bar f(t,y,a)\big| + \big|\partial_{yy}^2\bar g(y)\big| \ &\leq \ \bar K\,\big(1 + |y|\big)^{\bar q},
\end{align*}
for all $(t,y,a)\in[0,T]\times\R^{d\hat d}\times A$.
\end{enumerate}
Then, for every $\eps\in(0,1)$, there exist $v_\eps\colon[0,T]\times C([0,T];\R^d)\rightarrow\R$ and $\bar v_\eps\colon[0,T]\times\R^{d\hat d}\rightarrow\R$, with
\[
v_\eps(t,x) \ = \ \bar v_\eps\bigg(t,\int_{[0,t]}\phi_1(s)\,d^-x(s)+x(0),\ldots,\int_{[0,t]}\phi_{\hat d}(s)\,d^-x(s)+x(0)\bigg),
\]
for all $(t,x)\in[0,T]\times C([0,T];\R^d)$, such that the following holds.
\begin{enumerate}[\upshape 1)]
\item $v_\eps\in C_{\textup{pol}}^{1,2}([0,T]\times C([0,T];\R^d))$ and $\bar v_\eps\in C^{1,2}([0,T]\times\R^{d\hat d})$.
\item $v_\eps$ is a classical solution of the following equation in the unknown $u$ (see Remark \ref{R:v_eps}):
\begin{equation}\label{HJB_eps}
\hspace{-.8cm}\begin{cases}
\vspace{2mm}
\partial_t^H u(t,x) + \dfrac{1}{2}\eps^2\textup{tr}\big[\partial_{yy}\bar v_\eps(t,y^{t,x})\big] + \sup_{a\in A}\bigg\{\big\langle b(t,x,a),\partial^V_x u(t,x)\big\rangle \\
\vspace{2mm}+\,\dfrac{1}{2}\textup{tr}\big[(\sigma\sigma\trans)(t,x,a)\partial^V_{xx}u(t,x)\big]+f(t,x,a)\bigg\} = 0, &\hspace{-2.7cm}(t,x)\in[0,T)\times C([0,T];\R^d), \\
u(T,x) = g(x), &\,\hspace{-2.7cm}x\in C([0,T];\R^d),
\end{cases}
\end{equation}
where, for every $(t,x)\in[0,T]\times C([0,T];\R^d)$, $y^{t,x}\in\R^{d\hat d}$ is defined as
\begin{equation}\label{y^t,x}
y^{t,x} \ := \ \bigg(\int_{[0,t]}\phi_1(s)\,d^-x(s),\ldots,\int_{[0,t]}\phi_{\hat d}(s)\,d^-x(s)\bigg).
\end{equation}
\item There exists a constant $\bar C'\geq0$, depending only on $\hat K,\bar K,\bar q$, such that
\[
- \bar C'\,\textup{e}^{\bar C'(T-t)}\,\big(1 + |y|\big)^{3\bar q} \ \leq \ \partial_{y_iy_j}^2 \bar v_\eps(t,y) \ \leq \ \frac{1}{\eps^2}\bar C'\,\textup{e}^{\bar C'(T-t)}\,\big(1 + |y|\big)^{3\bar q},
\]
for all $(t,y)\in[0,T]\times\R^{d\hat d}$ and every $i,j=1,\ldots,d\hat d$.
\item There exists a constant $\bar L'\geq0$, depending only on $T$ and $\hat K$, such that
\[
\big|\partial_x^V v_\eps(t,x)\big| \ \leq \ \bar L',
\]
for every $(t,x)\in[0,T]\times C([0,T];\R^d)$.
\item Finally, $v_\eps$ converges pointwise to $v$ in \eqref{Value} as $\eps\rightarrow0^+$.
\end{enumerate}
\end{Lemma}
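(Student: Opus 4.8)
The plan is to exploit the cylindrical structure to turn the problem into a finite-dimensional controlled diffusion, regularize it by adding a small independent non-degenerate noise, and then lift the resulting smooth finite-dimensional value function back to the path space. More precisely, let $\tilde b,\tilde\sigma\colon[0,T]\times\R^{d\hat d}\times A\to\R^{d\hat d},\R^{d\hat d\times m}$ be the ``stacked'' coefficients whose $k$-th block ($k=1,\dots,\hat d$) equals $\phi_k(t)\bar b(t,y,a)$, resp.\ $\phi_k(t)\bar\sigma(t,y,a)$. If $X=X^{t,x,\alpha}$ solves \eqref{SDE} with initial path $x$, then, since for $\phi_k\in C^1$ the forward integral coincides with the It\^o integral (equivalently, by the integration by parts formula \eqref{IntbyParts}, $\int_{[0,s]}\phi_k\,d^-X=\phi_k(s)X_s-\int_0^sX_r\phi_k'(r)\,dr$), the process $Y_s:=\big(\int_{[0,s]}\phi_1\,d^-X,\dots,\int_{[0,s]}\phi_{\hat d}\,d^-X\big)$ is an It\^o process with $dY_s=\tilde b(s,Y_s,\alpha_s)\,ds+\tilde\sigma(s,Y_s,\alpha_s)\,dB_s$ and $Y_t=y^{t,x}$ (see \eqref{y^t,x}); moreover $b(s,X,\alpha_s)=\bar b(s,Y_s,\alpha_s)$, and similarly for $\sigma,f$, while $g(X)=\bar g(Y_T)$. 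Hence $J(t,x,\alpha)=\E\big[\int_t^T\bar f(s,Y_s,\alpha_s)\,ds+\bar g(Y_T)\big]$, so that $v(t,x)=\bar v(t,y^{t,x})$, where $\bar v$ is the value function of the finite-dimensional control problem with coefficients $\tilde b,\tilde\sigma,\bar f,\bar g$.

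\textbf{Regularization and verification of 1)--2).} Fix $\eps\in(0,1)$. Enlarging the probability space with an independent $d\hat d$-dimensional Brownian motion $W$, I define $\bar v_\eps$ as the value function of the controlled SDE $dY^\eps_s=\tilde b\,ds+\tilde\sigma\,dB_s+\eps\,dW_s$; its HJB equation is the finite-dimensional analogue of \eqref{HJB} for $\tilde b,\tilde\sigma,\bar f,\bar g$ with the additional term $\frac12\eps^2\textup{tr}[\partial_{yy}\bar v_\eps]$. Since this operator is uniformly elliptic, the coefficients are bounded, Lipschitz in $y$ uniformly in $(t,a)$ and, by ii)--iii), $C^{1,2}$ in $(t,y)$ for each $a$ with polynomially growing derivatives (and $\bar g\in C^2$ with the same growth), classical results on uniformly parabolic Bellman equations (e.g.\ \cite{Krylov80}) give $\bar v_\eps\in C^{1,2}([0,T]\times\R^{d\hat d})$, a classical solution of that equation. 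Set $v_\eps(t,x):=\bar v_\eps(t,y^{t,x})$. Using \eqref{IntbyParts} one checks that $(t,x)\mapsto y^{t,x}$ has zero horizontal derivative and that replacing $x$ by $x+he_i1_{[t,T]}$ shifts the $k$-th block of $y^{t,x}$ by $h\phi_k(t)e_i$; hence $v_\eps\in C_{\textup{pol}}^{1,2}([0,T]\times C([0,T];\R^d))$ (Definition \ref{D:C_pol^1,2}), with $\partial_t^Hv_\eps(t,x)=\partial_t\bar v_\eps(t,y^{t,x})$, $\partial_x^Vv_\eps(t,x)=\sum_k\phi_k(t)\,\partial_{y_k}\bar v_\eps(t,y^{t,x})$ and $\partial_{xx}^Vv_\eps(t,x)=\sum_{k,k'}\phi_k(t)\phi_{k'}(t)\,\partial_{y_ky_{k'}}^2\bar v_\eps(t,y^{t,x})$, where $\partial_y\bar v_\eps,\partial_{yy}^2\bar v_\eps$ are read off blockwise. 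Plugging these into \eqref{HJB_eps} and using that the block structure of $\tilde\sigma$ gives $\langle b(t,x,a),\partial_x^Vv_\eps\rangle=\langle\tilde b(t,y^{t,x},a),\partial_y\bar v_\eps\rangle$ and $\textup{tr}[(\sigma\sigma\trans)(t,x,a)\partial_{xx}^Vv_\eps]=\textup{tr}[(\tilde\sigma\tilde\sigma\trans)(t,y^{t,x},a)\partial_{yy}\bar v_\eps]$, while $f(t,x,a)=\bar f(t,y^{t,x},a)$, reduces \eqref{HJB_eps} to the finite-dimensional HJB for $\bar v_\eps$ evaluated at $(t,y^{t,x})$, which holds; the terminal condition is immediate. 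This proves 1)--2).

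\textbf{A priori estimates and convergence.} Item 4) follows from $\partial_x^Vv_\eps=\sum_k\phi_k(t)\partial_{y_k}\bar v_\eps$ and the standard Lipschitz bound $|\partial_y\bar v_\eps|\le C(T,\hat K)$ for value functions with bounded Lipschitz data, which is insensitive to the addition of the independent noise $\eps W$. For 3), the $\eps$-uniform lower bound is a semiconvexity estimate: differentiating twice in the initial datum $y$ the rewards $\bar J^\eps(\cdot,\alpha)$ whose supremum is $\bar v_\eps$, using that $y\mapsto Y^{\eps,t,y,\alpha}_s$ is twice differentiable with $\partial_yY$, $\partial_{yy}^2Y$ bounded independently of $\eps$ (the variational equations do not involve $W$) and that $\partial_y,\partial_{yy}^2$ of $\bar f,\bar g$ grow like $(1+|y|)^{\bar q}$, one gets that each $\bar J^\eps(\cdot,\alpha)$ is semiconvex uniformly in $\alpha,\eps$, hence so is $\bar v_\eps$, and since $\bar v_\eps\in C^{1,2}$ this becomes the pointwise lower bound with exponent $3\bar q$ and a constant depending only on $\hat K,\bar K,\bar q$; the upper bound is then obtained from the equation, isolating $\frac12\eps^2\textup{tr}[\partial_{yy}\bar v_\eps]$, bounding the remaining terms via the boundedness of $\tilde b,\tilde\sigma,\bar f$, the bounds on $\partial_t\bar v_\eps,\partial_y\bar v_\eps$ and the lower bound just proved, and finally recovering each entry of $\partial_{yy}^2\bar v_\eps$ (diagonal entries by subtracting the lower bounds of the others, off-diagonal ones by polarization). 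Item 5) reduces, via $v_\eps(t,x)=\bar v_\eps(t,y^{t,x})$ and $v(t,x)=\bar v(t,y^{t,x})$, to $\bar v_\eps\to\bar v$ pointwise, which follows from the Gronwall bound $\|Y^{\eps,t,y,\alpha}-Y^{t,y,\alpha}\|_{\mathbf S_2}^2\le C\eps^2$, uniform in $\alpha$ (since $\eps\,dW$ contributes quadratic variation of order $\eps^2$), together with the boundedness and Lipschitz continuity of $\bar f,\bar g$.

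\textbf{Main obstacle.} The two delicate points are the $C^{1,2}$-regularity of $\bar v_\eps$ (classical solvability of the non-degenerate Bellman equation under the present structural hypotheses) and, above all, the $\eps$-\emph{uniform} second-order lower bound in 3): it is precisely this estimate that makes the scheme usable in the proof of Theorem \ref{T:Comparison} (it is what lets the frozen term $\frac12\eps^2\textup{tr}[\partial_{yy}\bar v_{n,\eps}]$ vanish as $\eps\to0$), and it hinges on keeping the constants in the second variational equation of the controlled flow independent of $\eps$, which is exactly what the independence of $\eps\,dW$ from the initial datum secures.
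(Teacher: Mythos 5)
Your overall strategy is the same as the paper's: lift the problem to $\R^{d\hat d}$ through the stacked coefficients $\boldsymbol\phi\bar b$, $\boldsymbol\phi\bar\sigma$, regularize the finite-dimensional Bellman equation with $\tfrac12\eps^2\Delta_y$, invoke classical uniformly parabolic theory (Lieberman/Krylov in the paper) for $\bar v_\eps\in C^{1,2}$ and the $\eps$-uniform one-sided Hessian bound, lift back to path space by the chain-rule identities \eqref{EqualitiesDerivH}--\eqref{EqualitiesDerivVV}, and get item 5) from an $O(\eps)$ comparison of the regularized and unregularized problems (the paper quotes Krylov's estimate, you rerun the Gronwall argument — either is fine, modulo the standard remark that enlarging the filtration by the independent noise $W$ does not change $\bar v$). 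Your semiconvexity route to the lower bound in 3) is a legitimate substitute for the paper's citation of Krylov's Theorems 4.1.1 and 4.7.4, and the recovery of the $\eps^{-2}$ upper bound from the equation is sound; note only that in your variational-equation argument the constants inevitably pick up $\sup_t\|\boldsymbol\phi(t)\|$ and the dimension $d\hat d$, which is harmless for 3) because in the application (Theorem \ref{T:CylindrApprox1}) that constant is allowed to depend on $n$.

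The genuine gap is item 4). You deduce it from ``$|\partial_y\bar v_\eps|\le C(T,\hat K)$ for value functions with bounded Lipschitz data'' together with $\partial_x^Vv_\eps=\sum_k\phi_k(t)\partial_{y_k}\bar v_\eps$. But the Lipschitz data of the finite-dimensional problem are $\boldsymbol\phi\bar b$ and $\boldsymbol\phi\bar\sigma$, whose Lipschitz constants in $y$ are of order $\hat K\sup_t\|\boldsymbol\phi(t)\|$, and the chain rule then multiplies the gradient by the $\phi_k(t)$ again and sums over $k=1,\dots,\hat d$; so this route produces a bound depending on the $\phi_k$ and on the dimension $d\hat d$, not ``only on $T$ and $\hat K$'' as the statement requires. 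This is not a cosmetic issue: the lemma is applied with $\hat d=d_n=2^n+1$ and $\phi_k=\phi_{n,k}$, and the proof of Theorem \ref{T:Comparison} needs the bound \eqref{EstimateLipschitz1} with a constant $\bar L$ independent of $n$ (it multiplies $\sup_a|b-b_n|$ before letting $n\to\infty$). The paper closes this by going back to path space: using the lifted dynamics one shows (as in \eqref{Value_eps_bis}) that $v_\eps$, and its c\`adl\`ag extension $\hat v_\eps$, is itself the value function of the original path-dependent control problem with coefficients $b,\sigma,f,g$ plus the additive perturbation $\eps\,dW_s$; Lipschitz continuity of $\hat v_\eps$ in the path with respect to $\|\cdot\|_t$, with constant depending only on $K$ and $T$, then follows exactly as in Proposition \ref{P:Value}, and the vertical-derivative bound is read off from the difference quotient along the bump $he_i1_{[t,T]}$. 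Without this (or an equivalent argument exploiting that only directional derivatives along $\boldsymbol\phi_i(t)$ are needed), item 4) as you state it is unproved.
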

\begin{Remark}\label{R:v_eps}
{In equation \eqref{HJB_eps} with unknown $u$, the term $\frac{1}{2}\eps^2\textup{tr}[\partial_{yy}\bar v_\eps(t,y^{t,x})]$ is known as it does not depend on $u$ but it involves the function $\bar v_\eps$. The reason for the presence of this term is due to the fact that we first derive the HJB equation for the function $\bar v_\eps$, then we use equalities \eqref{EqualitiesDerivH}-\eqref{EqualitiesDerivV}-\eqref{EqualitiesDerivVV} to derive equation \eqref{HJB_eps} for $v_\eps$. However, from those equalities we are not able to rewrite the term $\frac{1}{2}\eps^2\textup{tr}[\partial_{yy}\bar v_\eps(t,y^{t,x})]$ in terms of $v_\eps$, therefore we have left it as it is since this is not relevant for the sequel $($actually we could work with the HJB equation satisfied by $\bar v_\eps$$)$.}
\end{Remark}
\begin{proof}[\textbf{Proof (of Lemma \ref{L:CylindrApprox3}).}]
Let $\boldsymbol\phi\colon[0,T]\rightarrow\R^{(d\hat d)\times d}$ be given by
\begin{equation}\label{bold_phi}
\boldsymbol\phi(t) \ = \
\left[\begin{array}{c}
\phi_1(t)I_d \\
\vdots \\
\phi_{\hat d}(t)I_d	
\end{array}
\right]
\end{equation}
for all $t\in[0,T]$, where $I_d$ denotes the $d\times d$ identity matrix. Let $\bar b_\phi\colon[0,T]\times\R^{d\hat d}\times A\rightarrow\R^{d\hat d}$ and $\bar\sigma_\phi\colon[0,T]\times\R^{d\hat d}\times A\rightarrow\R^{(d\hat d)\times m}$ be given by
\begin{equation}\label{b_sigma_phi}
\bar b_\phi(t,y,a) \ = \ \boldsymbol\phi(t)\,\bar b(t,y,a), \qquad\qquad
\bar\sigma_\phi(t,y,a) \ = \ \boldsymbol\phi(t)\,\bar\sigma(t,y,a),
\end{equation}
for all $(t,y,a)\in[0,T]\times\R^{d\hat d}\times A$, with $\bar b(t,y,a)$ being a column vector of dimension $d$. For every $\eps\in(0,1)$, consider the following Hamilton-Jacobi-Bellman equation on $[0,T]\times\R^{d\hat d}$:
\begin{equation}\label{HJB_fin_dim}
\begin{cases}
\vspace{2mm}
\partial_t \bar u(t,y) + \sup_{a\in A}\bigg\{\big\langle\bar b_\phi(t,y,a),\partial_y \bar u(t,y)\big\rangle + \dfrac{1}{2}\eps^2\text{tr}\big[\partial^2_{yy}\bar u(t,y)\big] \\
\vspace{2mm}+\,\dfrac{1}{2}\text{tr}\big[(\bar\sigma_\phi\bar\sigma_\phi\trans)(t,y,a)\partial^2_{yy}\bar u(t,y)\big]+\bar f(t,y,a)\bigg\} = 0, &\hspace{-1cm}(t,y)\in[0,T)\times\R^{d\hat d}, \\
\bar u(T,y) = \bar g(y), &\,\hspace{-1cm}y\in\R^{d\hat d}.
\end{cases}
\end{equation}
\emph{Proof of item 3).} From the assumptions on $\bar b,\bar\sigma,\bar f,\bar g$, it follows that there exists a unique classical solution $\bar v_\eps\in C^{1,2}([0,T]\times\R^{\hat d})$ of equation \eqref{HJB_fin_dim} (see for instance \cite[Theorem 14.15]{Lieberman} and, in particular, the comments after Theorem 14.15 concerning the case when the operators ``$L_\nu$'' are linear). Moreover, by \cite[Theorems 4.1.1 and 4.7.4]{Krylov80} we have that item 3) holds. Furthermore, by \cite[Theorem 4.6.2]{Krylov80} there exists some constant $\hat C\geq0$, depending only on $\hat K$, such that
\begin{equation}\label{v_esp-v}
|\bar v_\eps(t,y) - \bar v(t,y)| \ \leq \ \eps\,\hat C\,\textup{e}^{\hat C(T-t)},
\end{equation}
for every $(t,y)\in[0,T]\times\R^{d\hat d}$, where $\bar v\colon[0,T]\times\R^{d\hat d}\rightarrow\R$ is defined as
\[
\bar v(t,y) \ = \ \sup_{\alpha\in\Ac} \E\bigg[\int_t^T \bar f(s,Y_s^{t,y,\alpha},\alpha_s)\,ds + \bar g(Y_T^{t,y,\alpha})\bigg],
\]
with $Y^{t,y,\alpha}=(Y_s^{t,y,\alpha})_{s\in[t,T]}$ solving the following system of controlled stochastic differential equations:
\[
\begin{cases}
dY_s^{t,y,\alpha} \ = \ \bar b_\phi(s,Y_s^{t,y,\alpha},\alpha_s)\,ds + \bar\sigma_\phi(s,Y_s^{t,y,\alpha},\alpha_s)\,dB_s, \qquad &\quad s\in(t,T], \\
Y_t^{t,y,\alpha} \ = \ y.
\end{cases}
\]
\emph{Proof of item 1).} For every $(t,x)\in[0,T]\times C([0,T];\R^d)$, consider $y^{t,x}\in\R^{d\hat d}$ given by \eqref{y^t,x}. Then, proceeding as in the proof of \cite[Theorem 3.15]{cosso_russoStrict} (see, in particular, equalities (3.16)), we obtain
\begin{align}\label{EqualitiesYX}
Y_r^{t,y^{t,x},\alpha} &= \bigg(\int_{[0,t]}\phi_1(s)\,d^-x(s)+\int_t^r\phi_1(s)\,dX_s^{t,x,\alpha},\ldots,\int_{[0,t]}\phi_{\hat d}(s)\,d^-x(s)+\int_t^r\phi_{\hat d}(s)\,dX_s^{t,x,\alpha}\bigg) \notag \\
&= \bigg(\int_{[0,r]}\phi_1(s)\,d^-X_s^{t,x,\alpha},\ldots,\int_{[0,r]}\phi_{\hat d}(s)\,d^-X_s^{t,x,\alpha}\bigg),
\end{align}
for all $r\in[t,T]$, $\P$-a.s., where, for each $i=1,\ldots,d$, $\int_{[0,r]}\phi_i(s)\,d^-X_s^{t,x,\alpha}$ is intended $\P$-a.s. as a deterministic forward integral. From \eqref{EqualitiesYX} we get
\begin{align*}
\bar v\bigg(t,\int_{[0,t]}\phi_1(s)\,d^-x(s),\ldots,\int_{[0,t]}\phi_{\hat d}(s)\,d^-x(s)\bigg) = \sup_{\alpha\in\Ac} \E\bigg[\int_t^T \bar f(s,Y_s^{t,y^{t,x},\alpha},\alpha_s)ds + \bar g(Y_T^{t,y^{t,x},\alpha})\bigg] \\
= \sup_{\alpha\in\Ac} \E\bigg[\int_t^T f(s,X^{t,x,\alpha},\alpha_s)ds + g(X^{t,x,\alpha})\bigg] = v(t,x),
\end{align*}
where $v$ is the value function defined in \eqref{Value}.\\
Now, let $v_\eps\colon[0,T]\times C([0,T];\R^d)\rightarrow\R$ be defined as
\begin{equation}\label{Value_eps}
v_\eps(t,x) \ := \ \bar v_\eps\bigg(t,\int_{[0,t]}\phi_1(s)\,d^-x(s),\ldots,\int_{[0,t]}\phi_{\hat d}(s)\,d^-x(s)\bigg),
\end{equation}
for every $(t,x)\in[0,T]\times C([0,T];\R^d)$. Then, by direct calculations (proceeding as in the proof of \cite[Lemma D.1]{CR19}), we deduce that $v_\eps\in C_{\textup{pol}}^{1,2}([0,T]\times C([0,T];\R^d))$ and that the following equalities hold:
\begin{align}
\partial_t^H v_\eps(t,x) \ &= \ \partial_t \bar v_\eps\bigg(t,\int_{[0,t]}\phi_1(s)\,d^-x(s),\ldots,\int_{[0,t]}\phi_{\hat d}(s)\,d^-x(s)\bigg), \label{EqualitiesDerivH} \\
\partial_{x_i}^V v_\eps(t,x) \ &= \ \bigg\langle\partial_y \bar v_\eps\bigg(t,\int_{[0,t]}\phi_1(s)\,d^-x(s),\ldots,\int_{[0,t]}\phi_{\hat d}(s)\,d^-x(s)\bigg),\boldsymbol\phi_i(t)\bigg\rangle, \label{EqualitiesDerivV} \\
\partial_{xx}^V v_\eps(t,x) \ &= \ \boldsymbol\phi\trans(t)\,\partial_{yy}^2\bar v_\eps\bigg(t,\int_{[0,t]}\phi_1(s)\,d^-x(s),\ldots,\int_{[0,t]}\phi_{\hat d}(s)\,d^-x(s)\bigg)\boldsymbol\phi(t), \label{EqualitiesDerivVV}
\end{align}
for every $i=1,\ldots,d$, where $\boldsymbol\phi_i(t)$ denotes the $i$-th column of the matrix $\boldsymbol\phi(t)$.

\vspace{1mm}

\noindent\emph{Proof of item 2).} Since $\bar v_\eps$ is a classical solution of equation \eqref{HJB_fin_dim}, it follows from equalities \eqref{EqualitiesDerivH}-\eqref{EqualitiesDerivV}-\eqref{EqualitiesDerivVV} that $v_\eps$ is a classical solution of equation \eqref{HJB_eps}.

\vspace{1mm}

\noindent\emph{Proof of item 5).} From \eqref{v_esp-v}, we have
\[
|v_\eps(t,x) - v(t,x)| \ = \ |\bar v_\eps(t,y^{t,x}) - \bar v(t,y^{t,x})| \ \leq \ \eps\,\hat C\,\textup{e}^{\hat C(T-t)},
\]
for every $(t,x)\in[0,T]\times C([0,T];\R^d)$. This shows the validity of item 5).

\vspace{1mm}

\noindent\emph{Proof of item 4).} Following \cite[Section 6 of Chapter 4]{Krylov80}, we now formulate a stochastic optimal control problem with value function $\bar v_\eps$. To simplify notation we still consider the same probability space $(\Omega,\Fc,\P)$, on which we suppose that another Brownian motion $W=(W_t)_{t\geq0}$, $d$-dimensional and independent of $B$, is defined. For every $t\in[0,T]$, we denote by $\hat\F^t=(\hat\Fc_s^t)_{s\geq0}$ the $\P$-completion of the filtration generated by $(B_{s\vee t}-B_t)_{s\geq0}$ and $(W_{s\vee t}-W_t)_{s\geq0}$. We also denote by $\hat\Ac_t$ the family of all $\hat\F^t$-progressively measurable processes $\hat\alpha\colon[0,T]\times\Omega\rightarrow A$. Then, $\bar v_\eps$ admits the following stochastic control representation:
\[
\bar v_\eps(t,y) \ = \ \sup_{\hat\alpha\in\hat\Ac_t} \E\bigg[\int_t^T \bar f(s,Y_s^{\eps,t,y,\hat\alpha},\hat\alpha_s)\,ds + \bar g(Y_T^{\eps,t,y,\hat\alpha})\bigg],
\]
with $Y^{\eps,t,y,\hat\alpha}=(Y_s^{\eps,t,y,\hat\alpha})_{s\in[t,T]}$ solving the following system of controlled stochastic differential equations:
\[
\begin{cases}
dY_s^{\eps,t,y,\hat\alpha} = \bar b_\phi(s,Y_s^{\eps,t,y,\hat\alpha},\hat\alpha_s)\,ds + \bar\sigma_\phi(s,Y_s^{\eps,t,y,\hat\alpha},\hat\alpha_s)\,dB_s + \eps\,\boldsymbol\phi(s)\,dW_s, \qquad & s\in(t,T], \\
Y_t^{\eps,t,y,\hat\alpha} = y,
\end{cases}
\]
with $\boldsymbol\phi$ as in \eqref{bold_phi} and $\bar b_\phi,\bar\sigma_\phi$ as in \eqref{b_sigma_phi}, respectively. Now, given $t\in[0,T]$, $x\in C([0,T];\R^d)$, $\hat\alpha\in\hat\Ac_t$, $\eps\in(0,1)$, consider the solution $X^{\eps,t,x,\hat\alpha}$ to the following system of controlled stochastic differential equations:
\[
\begin{cases}
dX_s^{\eps,t,x,\hat\alpha} = b(s,X^{\eps,t,x,\hat\alpha},\hat\alpha_s)\,ds + \sigma(s,X^{\eps,t,x,\hat\alpha},\hat\alpha_s)\,dB_s + \eps\,dW_s, \qquad &\quad s\in(t,T], \\
X_s^{\eps,t,x,\hat\alpha} = x(s), &\quad s\in[0,t].
\end{cases}
\]
From similar calculations as in \eqref{EqualitiesYX}, we deduce that (recall that $y^{t,x}$ is given by \eqref{y^t,x})
\[
Y_r^{\eps,t,y^{t,x},\hat\alpha} \ = \ \bigg(\int_{[0,r]}\phi_1(s)\,d^-X_s^{\eps,t,x,\hat\alpha},\ldots,\int_{[0,r]}\phi_{\hat d}(s)\,d^-X_s^{\eps,t,x,\hat\alpha}\bigg),
\]
for all $r\in[t,T]$, $\P$-a.s.. Then
\begin{align*}
\bar v_\eps(t,y^{t,x}) \ &= \ \sup_{\hat\alpha\in\hat\Ac_t} \E\bigg[\int_t^T \bar f(s,Y_s^{\eps,t,y^{t,x},\hat\alpha},\hat\alpha_s)ds + \bar g(Y_T^{\eps,t,y^{t,x},\hat\alpha})\bigg] \\
&= \ \sup_{\hat\alpha\in\hat\Ac_t} \E\bigg[\int_t^T f(s,X^{\eps,t,x,\hat\alpha},\hat\alpha_s)ds + g(X^{\eps,t,x,\hat\alpha})\bigg].
\end{align*}
Recalling \eqref{Value_eps} and the definition of $y^{t,x}$ in \eqref{y^t,x}, we get
\begin{equation}\label{Value_eps_bis}
v_\eps(t,x) \ = \ \sup_{\hat\alpha\in\hat\Ac_t} \E\bigg[\int_t^T f(s,X^{\eps,t,x,\hat\alpha},\hat\alpha_s)ds + g(X^{\eps,t,x,\hat\alpha})\bigg],
\end{equation}
for every $(t,x)\in[0,T]\times C([0,T];\R^d)$. Proceeding along the same lines as for the proof of \eqref{Value_eps_bis}, we obtain
\[
\hat v_\eps(t,\hat x) \ = \ \sup_{\hat\alpha\in\hat\Ac_t} \E\bigg[\int_t^T f(s,X^{\eps,t,\hat x,\hat\alpha},\hat\alpha_s)ds + g(X^{\eps,t,\hat x,\hat\alpha})\bigg],
\]
for every $(t,\hat x)\in[0,T]\times D([0,T];\R^d)$. Then, from the Lipschitz property of $f$ and $g$, we derive the following Lipschitz property of $\hat v_\eps$
\[
|\hat v_\eps(t,\hat x) - \hat v_\eps(t,\hat x')| \ \leq \ \bar L'\,\|\hat x - \hat x'\|_t,
\]
for all $t\in[0,T]$, $\hat x,\hat x'\in D([0,T];\R^d)$, for some constant $\bar L'$, depending only on $T$ and $K$. As a consequence, from the definition of vertical derivative of $v_\eps$, we see that item 4) holds.
\end{proof}

\noindent We can now state the following result, which plays a crucial role in the proof the comparison theorem (Theorem \ref{T:Comparison}), in order to show that $u_1\leq v$.

\begin{Theorem}\label{T:CylindrApprox1}
Let Assumptions \ref{AssA}, \ref{AssB}, \ref{AssC} hold. Consider the sequences $\{b_n\}_n$, $\{f_n\}_n$, $\{g_n\}_n$, $\{v_n\}_n$ in \eqref{Coeff_n}-\eqref{Value_n} $($recall from Lemma \ref{L:CylindrApprox1} that $d_n$ and $\phi_{n,1},\ldots,\phi_{n,d_n}$ are the same for $b$, $f$, $g$$)$. We also assume, without loss of generality, that in Assumption \ref{AssC}\textup{-(i)}, $\bar d=d_n$ and that the functions $\varphi_1,\ldots,\varphi_{\bar d}$ coincide with $\phi_{n,1},\ldots,\phi_{n,d_n}$.\\
Then, for every $n$ and any $\eps\in(0,1)$, there exist $v_{n,\eps}\colon[0,T]\times C([0,T];\R^d)\rightarrow\R$ and $\bar v_{n,\eps}\colon[0,T]\times\R^{dd_n}\rightarrow\R$, with
\[
v_{n,\eps}(t,x) \ = \ \bar v_{n,\eps}\bigg(t,\int_{[0,t]}\phi_{n,1}(s)\,d^-x(s),\ldots,\int_{[0,t]}\phi_{n,d_n}(s)\,d^-x(s)\bigg),
\]
for all $(t,x)\in[0,T]\times C([0,T];\R^d)$, such that the following holds.
\begin{enumerate}[\upshape 1)]
\item $v_{n,\eps}\in C_{\textup{pol}}^{1,2}([0,T]\times C([0,T];\R^d))$ and $\bar v_{n,\eps}\in C^{1,2}([0,T]\times\R^{dd_n})$.
\item $v_{n,\eps}$ is a classical solution of equation \eqref{HJB_eps} with $b,f,g,\bar v_\eps,y^{t,x}$ replaced respectively by $b_n,f_n,g_n,\bar v_{n,\eps},y_n^{t,x}$, where $y_n^{t,x}$ is given by \eqref{y_n^t,x}.
\item There exists a constant $\bar C_n\geq0$, independent of $\eps$, such that
\begin{equation}
- \bar C_n\,\textup{e}^{\bar C_n(T-t)}\,\big(1 + |y|\big)^{3q} \ \leq \ \partial_{y_iy_j}^2 \bar v_{n,\eps}(t,y) \ \leq \ \frac{1}{\eps^2}\bar C_n\,\textup{e}^{\bar C_n(T-t)}\,\big(1 + |y|\big)^{3q}, \label{EstimateKrylov}
\end{equation}
for all $(t,y)\in[0,T]\times\R^{d\hat d}$ and every $i,j=1,\ldots,dd_n$, with $q$ as in item (iii) of Assumption \ref{AssC}.
\item There exists a constant $\bar L\geq0$, depending only on $T$ and $K$, such that
\begin{equation}\label{EstimateLipschitz1}
\big|\partial_x^V v_{n,\eps}(t,x)\big| \ \leq \ \bar L,
\end{equation}
for every $(t,x)\in[0,T]\times C([0,T];\R^d)$.
\item There exists a constant $\bar c\geq0$, depending only on $K$ and $T$, such that
\[
|v_{n,\eps}(t,x) - v_{n,\eps}(t',x')| \ \leq \ \bar c\big(|t-t'|^{1/2} + \|x(\cdot\wedge t) - x'(\cdot\wedge t')\|_T\big),
\]
for all $(t,x),(t',x')\in[0,T]\times C([0,T];\R^d)$.
\item For every $n$, $v_{n,\eps}$ converges pointwise to $v_n$ in \eqref{Value_n} as $\eps\rightarrow0^+$.
\item $v_n$ converges pointwise to $v$ in \eqref{Value} as $n\rightarrow+\infty$.
\end{enumerate}
\end{Theorem}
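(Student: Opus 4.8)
The plan is to deduce the theorem directly from the results already established in this appendix, with essentially no additional work. Items~1), 2), 3), 4) and~6) will follow from Lemma~\ref{L:CylindrApprox3} applied to the approximated coefficients $b_n,\sigma,f_n$ and terminal datum $g_n$ in place of $b,\sigma,f,g$ (taking $\hat d:=d_n$ and $\phi_i:=\phi_{n,i}$); item~7) is exactly Lemma~\ref{L:CylindrApprox2}; and item~5) will follow from Proposition~\ref{P:Value}. So the first step I would carry out is to verify that the quadruple $b_n,\sigma,f_n,g_n$ satisfies hypotheses i)--iii) of Lemma~\ref{L:CylindrApprox3}.

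For hypothesis i) (cylindricity) I would use item~3) of Lemma~\ref{L:CylindrApprox1}, which represents $b_n,f_n,g_n$ as cylindrical maps in the \emph{same} integrals $\int_{[0,t]}\phi_{n,i}(s)\,d^-x(s)$, $i=1,\dots,d_n$, and, for $\sigma$, Assumption~\ref{AssC}-(i) read through the reduction stated in the theorem ($\bar d=d_n$ and $\varphi_i=\phi_{n,i}$, without loss of generality), which exhibits $\sigma$ as cylindrical in those same integrals. For hypothesis ii) (Lipschitz continuity and boundedness of the ``barred'' coefficients) I would invoke items~4)-i) and~4)-iii) of Lemma~\ref{L:CylindrApprox1} --- valid because $b,f,g$ are bounded by Assumption~\ref{AssA}, so the constant depends only on $K$ --- together with Assumption~\ref{AssC}-(ii) for $\bar\sigma$. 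For hypothesis iii) ($C^{1,2}$-regularity with polynomially growing derivatives) I would use item~5) of Lemma~\ref{L:CylindrApprox1}, where the growth exponent is $q=0$ precisely because $b,f,g$ satisfy the boundedness item c), and Assumption~\ref{AssC}-(iii) for $\bar\sigma$, which contributes growth exponent $q$; hence the common exponent in Lemma~\ref{L:CylindrApprox3} can be taken equal to this $q$. Once this is checked, Lemma~\ref{L:CylindrApprox3} produces $v_{n,\eps}$ and $\bar v_{n,\eps}$ with the asserted cylindrical representation, and its items~1)--5) translate into items~1), 2), 3), 4), 6). In doing so I would set $\bar C_n:=\bar C'$ and observe that, since $\bar C'$ depends only on $\hat K,\bar K$ and the growth exponent, it does not depend on $\eps$, so that \eqref{EstimateKrylov} is uniform in $\eps$ with exponent $3q$; and I would set $\bar L:=\bar L'$, noting that $\bar L'$ depends only on $T$ and $\hat K$, hence only on $T$ and $K$.

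For item~7) there is nothing to do: it is precisely Lemma~\ref{L:CylindrApprox2}. For item~5), I would use the representation~\eqref{Value_eps_bis} obtained inside the proof of Lemma~\ref{L:CylindrApprox3}, according to which $v_{n,\eps}$ is the value function of the stochastic optimal control problem with drift $b_n$ and diffusion term $\sigma(s,X,\hat\alpha_s)\,dB_s+\eps\,dW_s$, where $(B,W)$ is an $(m+d)$-dimensional Brownian motion. The coefficients of this augmented problem are continuous and satisfy the estimates in Assumption~\ref{AssA} with constants depending only on $K$ (and on the fixed dimension $d$): $b_n,f_n$ are Lipschitz in $x$ with constant $2K$ and bounded by $K$, $g_n$ is bounded, and the diffusion obtained by juxtaposing $\sigma$ and $\eps I_d$ is Lipschitz in $x$ with constant $K$ and has Frobenius norm at most $(K^2+d)^{1/2}$ because $\eps<1$. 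Proposition~\ref{P:Value} applied to this control problem then yields estimate~\eqref{Value_Lipschitz} for $v_{n,\eps}$ with a constant $\bar c$ depending only on $T$ and $K$, which is item~5).

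I do not expect a genuine obstacle here: the proof is essentially a matter of correctly feeding the approximated coefficients into Lemma~\ref{L:CylindrApprox3} and re-reading its conclusions. The one point that deserves attention is the bookkeeping of the constants --- verifying that the bound in \eqref{EstimateKrylov} is independent of $\eps$ and that the constants in \eqref{EstimateLipschitz1} and in item~5) depend only on $K$ and $T$, not on $n$ or $\eps$ --- together with checking that the reduction allowing $\sigma$ to be viewed as cylindrical in the integrals $\int_{[0,t]}\phi_{n,i}(s)\,d^-x(s)$ is indeed harmless, so that Lemma~\ref{L:CylindrApprox3} applies without modification.
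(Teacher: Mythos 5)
Your proposal is correct and follows the paper's own proof: items 1)--4) and 6) are obtained from Lemma \ref{L:CylindrApprox3} applied with $b,f,g$ replaced by $b_n,f_n,g_n$ (with $\hat d=d_n$, $\phi_i=\phi_{n,i}$), item 5) from the estimate \eqref{Value_Lipschitz} via the control representation of $v_{n,\eps}$, and item 7) from Lemma \ref{L:CylindrApprox2}. Your additional verification of hypotheses i)--iii) and of the constants' dependence (independence of $\eps$ in \eqref{EstimateKrylov}, dependence only on $K,T$ in \eqref{EstimateLipschitz1} and item 5)) is exactly the bookkeeping the paper leaves implicit.
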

\begin{proof}[\textbf{Proof.}]
Items 1)-2)-3)-4)-6) follow directly from Lemma \ref{L:CylindrApprox3} with $b,f,g$ replaced respectively by $b_n,f_n,g_n$. Moreover, item 5) follows from \eqref{Value_Lipschitz}. Finally, item 7) follows from Lemma \ref{L:CylindrApprox2}.
\end{proof}

\noindent We end this section with the next result, which plays a fundamental role in the proof of the comparison theorem (Theorem \ref{T:Comparison}), in order to show that $v\leq u_2$.

\begin{Theorem}\label{T:CylindrApprox2}
Let Assumptions \ref{AssA}, \ref{AssB}, \ref{AssC} hold. For every $s_0\in[0,T]$, consider the sequences $\{b_n\}_n$, $\{f_n\}_n$, $\{v_n(s_0,\cdot)\}_n$ obtained applying Lemma \ref{L:CylindrApprox1} to $b$, $f$, $v(s_0,\cdot)$ $($recall from Lemma \ref{L:CylindrApprox1} that $d_n$ and $\phi_{n,1},\ldots,\phi_{n,d_n}$ are the same for $b$, $f$, $v_n(s_0,\cdot)$$)$. We also assume, without loss of generality, that in Assumption \ref{AssC}\textup{-(i)}, $\bar d=d_n$ and that the functions $\varphi_1,\ldots,\varphi_{\bar d}$ coincide with $\phi_{n,1},\ldots,\phi_{n,d_n}$.\\
For every $(s_0,a_0)\in[0,T]\times A$, let $v^{s_0,a_0}\colon[0,s_0]\times C([0,T];\R^d)\rightarrow\R$ be given by
\[
v^{s_0,a_0}(t,x) \ = \ \E\bigg[\int_t^{s_0} f\big(r,X^{t,x,a_0},\alpha_r\big)\,dr + v\big(s_0,X^{t,x,a_0}\big)\bigg], \quad \forall\,(t,x)\in[0,s_0]\times C([0,T];\R^d),
\]
where $X^{t,x,a_0}$ corresponds to the process $X^{t,x,\alpha}$ with $\alpha\equiv a_0$. Similarly, for every $n\in\N$, let $v_n^{s_0,a_0}\colon[0,s_0]\times C([0,T];\R^d)\rightarrow\R$ be given by
\[
v_n^{s_0,a_0}(t,x) \ = \ \E\bigg[\int_t^{s_0} f_n\big(r,X^{t,x,a_0},\alpha_r\big)\,dr + \hat v_n\big(s_0,X^{t,x,a_0}\big)\bigg], \quad \forall\,(t,x)\in[0,s_0]\times C([0,T];\R^d),
\]
where the sequence $\{\hat v_n\}$ is defined as in Lemma \ref{L:CylindrApprox1} starting from the function $v$.\\
Then, for every $n$, there exists $\bar v_n^{s_0,a_0}\colon[0,s_0]\times\R^{dd_n}\rightarrow\R$, with
\[
v_n^{s_0,a_0}(t,x) \ = \ \bar v_n^{s_0,a_0}\bigg(t,\int_{[0,t]}\phi_{n,1}(s)\,d^-x(s),\ldots,\int_{[0,t]}\phi_{n,d_n}(s)\,d^-x(s)\bigg),
\]
for all $(t,x)\in[0,s_0]\times C([0,T];\R^d)$, such that the following holds.
\begin{enumerate}[\upshape 1)]
\item $v_n^{s_0,a_0}\in C_{\textup{pol}}^{1,2}([0,s_0]\times C([0,T];\R^d))$ and $\bar v_n^{s_0,a_0}\in C^{1,2}([0,s_0]\times\R^{dd_n})$.
\item $v_n^{s_0,a_0}$ is a classical solution of the following equation:
\begin{equation}\label{HJB_n^s_0,a_0}
\hspace{-.4cm}\begin{cases}
\vspace{2mm}
\partial_t^H v_n^{s_0,a_0}(t,x) + f_n(t,x,a_0) + \big\langle b_n(t,x,a_0),\partial^V_x v_n^{s_0,a_0}(t,x)\big\rangle \\
\vspace{2mm}+\,\dfrac{1}{2}\textup{tr}\big[(\sigma\sigma\trans)(t,x,a_0)\partial^V_{xx}v_n^{s_0,a_0}(t,x)\big] = 0, &\hspace{-2cm}(t,x)\in[0,s_0)\times C([0,T];\R^d), \\
v_n^{s_0,a_0}(s_0,x) = \hat v_n(s_0,x), &\,\hspace{-2cm}x\in C([0,T];\R^d),
\end{cases}
\end{equation}
where $y_n^{t,x}$ is given by \eqref{y_n^t,x}.
\item There exists a constant $\hat L\geq0$, depending only on $T$ and $K$, such that
\[
\big|\partial_x^V v_n^{s_0,a_0}(t,x)\big| \ \leq \ \hat L,
\]
for every $(t,x)\in[0,s_0]\times C([0,T];\R^d)$.
\item There exists a constant $\hat c\geq0$, depending only on $K$ and $T$, such that
\[
|v_n^{s_0,a_0}(t,x) - v_n^{s_0,a_0}(t',x')| \ \leq \ \hat c\big(|t-t'|^{1/2} + \|x(\cdot\wedge t) - x'(\cdot\wedge t')\|_T\big),
\]
for all $(t,x),(t',x')\in[0,s_0]\times C([0,T];\R^d)$.
\item $v_n^{s_0,a_0}$ converges pointwise to $v^{s_0,a_0}$ as $n\rightarrow+\infty$.
\end{enumerate}
\end{Theorem}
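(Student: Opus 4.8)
The plan is to follow the scheme of Lemma~\ref{L:CylindrApprox3} and Theorem~\ref{T:CylindrApprox1}, the two simplifications being that the control is now frozen at $a_0$ — so the relevant finite-dimensional equation is \emph{linear} and no artificial nondegeneration (the $\tfrac12\varepsilon^2 I$ of Lemma~\ref{L:CylindrApprox3}) is needed — and that the time horizon is $[0,s_0]$ with terminal datum $\hat v_n(s_0,\cdot)$. First I would fix $(s_0,a_0)$, apply Lemma~\ref{L:CylindrApprox1} to $b$, to $f$ and, as in the statement, to $v$ viewed as a function of $(t,x)$, obtaining $b_n,f_n,\hat v_n$ together with their finite-dimensional reductions $\bar b_n,\bar f_n$ and (say) $\bar{\hat v}_n$; by Assumption~\ref{AssC} and the normalization $\bar d=d_n$, $\varphi_j=\phi_{n,j}$ stated in the theorem, all of $b_n,f_n,\hat v_n,\sigma$ are then cylindrical with respect to the common weights $\phi_{n,1},\dots,\phi_{n,d_n}$, with the regularity and the bounds recorded in Lemma~\ref{L:CylindrApprox1} and in Assumption~\ref{AssC}. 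With $\boldsymbol\phi$ as in \eqref{bold_phi} and $\bar b_{n,\phi}=\boldsymbol\phi\,\bar b_n$, $\bar\sigma_{n,\phi}=\boldsymbol\phi\,\bar\sigma$ as in \eqref{b_sigma_phi}, I would introduce the $\R^{dd_n}$-valued SDE $dY_s=\bar b_{n,\phi}(s,Y_s,a_0)\,ds+\bar\sigma_{n,\phi}(s,Y_s,a_0)\,dB_s$ on $[t,s_0]$ with $Y_t=y$, denote its solution $Y^{t,y}$, and set
\[
\bar v_n^{s_0,a_0}(t,y)\ :=\ \E\bigg[\int_t^{s_0}\bar f_n\big(r,Y_r^{t,y},a_0\big)\,dr+\bar{\hat v}_n\big(s_0,Y_{s_0}^{t,y}\big)\bigg].
\]
Arguing exactly as for \eqref{EqualitiesYX} (equivalently, as in the proof of \cite[Theorem 3.15]{cosso_russoStrict}) one obtains $Y^{t,y_n^{t,x}}_r=\big(\int_{[0,r]}\phi_{n,1}(s)\,d^-X^{n,t,x,a_0}_s,\dots,\int_{[0,r]}\phi_{n,d_n}(s)\,d^-X^{n,t,x,a_0}_s\big)$, where $X^{n,t,x,a_0}$ solves \eqref{SDE} with $b$ replaced by $b_n$ and $\alpha\equiv a_0$; hence $v_n^{s_0,a_0}(t,x)=\bar v_n^{s_0,a_0}(t,y_n^{t,x})$, with $y_n^{t,x}$ as in \eqref{y_n^t,x}.

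The core of the argument is the $C^{1,2}$-regularity of $\bar v_n^{s_0,a_0}$. Since $\bar b_{n,\phi}$ and $\bar\sigma_{n,\phi}$ are globally Lipschitz in $y$ and belong to $C^{1,2}([0,T]\times\R^{dd_n})$ with derivatives of at most polynomial growth (being products of the smooth deterministic matrix $\boldsymbol\phi$ with $\bar b_n$, $\bar\sigma$), and since $\bar f_n$ and the terminal datum $\bar{\hat v}_n(s_0,\cdot)$ share this regularity, the stochastic flow $y\mapsto Y^{t,y}_r$ is twice continuously differentiable (in the mean-square sense, with moment bounds); differentiating under the expectation then gives $\bar v_n^{s_0,a_0}(t,\cdot)\in C^2(\R^{dd_n})$, with polynomially growing derivatives, uniformly for $t\in[0,s_0]$. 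Crucially, this uses \emph{no} nondegeneracy of $\bar\sigma_{n,\phi}\bar\sigma_{n,\phi}\trans$: because $a_0$ is frozen the problem is linear, which is precisely why the $\varepsilon$-regularization of Lemma~\ref{L:CylindrApprox3} can here be dispensed with. The regularity in $t$ and the finite-dimensional (possibly degenerate) linear parabolic equation
\[
\partial_t\bar v_n^{s_0,a_0}+\bar f_n(\cdot,\cdot,a_0)+\big\langle\bar b_{n,\phi}(\cdot,\cdot,a_0),\partial_y\bar v_n^{s_0,a_0}\big\rangle+\tfrac12\textup{tr}\big[(\bar\sigma_{n,\phi}\bar\sigma_{n,\phi}\trans)(\cdot,\cdot,a_0)\,\partial_{yy}^2\bar v_n^{s_0,a_0}\big]=0,\qquad \bar v_n^{s_0,a_0}(s_0,\cdot)=\bar{\hat v}_n(s_0,\cdot),
\]
are then obtained by the standard Feynman--Kac argument (incremental quotients in $t$ together with It\^o's formula and the Markov property of $Y$). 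Once $\bar v_n^{s_0,a_0}\in C^{1,2}([0,s_0]\times\R^{dd_n})$ is in hand, the representation $v_n^{s_0,a_0}(t,x)=\bar v_n^{s_0,a_0}(t,y_n^{t,x})$ together with the chain-rule identities \eqref{EqualitiesDerivH}--\eqref{EqualitiesDerivV}--\eqref{EqualitiesDerivVV} (proved as in \cite[Lemma D.1]{CR19}) yields $v_n^{s_0,a_0}\in C_{\textup{pol}}^{1,2}([0,s_0]\times C([0,T];\R^d))$, which gives item~1); plugging those identities into the finite-dimensional equation, and using $\sigma(t,x,a_0)=\bar\sigma(t,y_n^{t,x},a_0)$ — so that $\boldsymbol\phi(t)\,(\sigma\sigma\trans)(t,x,a_0)\,\boldsymbol\phi\trans(t)=(\bar\sigma_{n,\phi}\bar\sigma_{n,\phi}\trans)(t,y_n^{t,x},a_0)$ — and $b_n(t,x,a_0)=\bar b_n(t,y_n^{t,x},a_0)$, gives exactly \eqref{HJB_n^s_0,a_0}, which is item~2).

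The remaining items are transcriptions of earlier arguments. Item~4) (joint $\tfrac12$-H\"older / Lipschitz continuity of $v_n^{s_0,a_0}$ with constant depending only on $K$ and $T$) follows as estimate \eqref{Value_Lipschitz} in Proposition~\ref{P:Value}, the data $b_n,\sigma,f_n,\hat v_n(s_0,\cdot)$ of the single-control problem defining $v_n^{s_0,a_0}$ being bounded and Lipschitz with constants depending only on $K$ and on the constant $c$ of Proposition~\ref{P:Value}. Item~3) is obtained as in item~4) of Lemma~\ref{L:CylindrApprox3}: extending $v_n^{s_0,a_0}$ to c\`adl\`ag paths, one checks $|v_n^{s_0,a_0}(t,\hat x)-v_n^{s_0,a_0}(t,\hat x')|\le\hat L\,\|\hat x-\hat x'\|_t$ with $\hat L$ depending only on $K,T$ (from the $\mathbf S_2$-estimate for the flow of an SDE with Lipschitz coefficients), whence the bound on $\partial_x^V v_n^{s_0,a_0}$. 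Finally, item~5) is proved as in Lemma~\ref{L:CylindrApprox2}: one bounds $|v_n^{s_0,a_0}(t,x)-v^{s_0,a_0}(t,x)|$ by a constant multiple of $\|X^{n,t,x,a_0}-X^{t,x,a_0}\|_{\mathbf S_2}$ plus the $L^1$-distances along $X^{t,x,a_0}$ of $f_n$ from $f$ and of $\hat v_n(s_0,\cdot)$ from $v(s_0,\cdot)$, controls $\|X^{n,t,x,a_0}-X^{t,x,a_0}\|_{\mathbf S_2}^2$ by $C\,\E\int_t^{s_0}|b_n-b|^2(r,X^{t,x,a_0},a_0)\,dr$, and concludes by item~1) of Lemma~\ref{L:CylindrApprox1} (i.e.\ \eqref{h_n-h_proof5}, applied to $b$, $f$ and to $v(s_0,\cdot)$) together with dominated convergence and the moment bounds on $X^{t,x,a_0}$. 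The only genuinely new point, and the one requiring care, is the regularity step of the second paragraph: one must check that, $a_0$ being frozen, classical solvability of the finite-dimensional linear equation holds \emph{without} uniform parabolicity, and this is exactly what the smoothness of the stochastic flow provides.
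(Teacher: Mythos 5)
Your proposal is correct and is essentially the paper's own argument: the paper disposes of items 1)--3) by pointing to the linear analogue of Lemma \ref{L:CylindrApprox3} (via \cite[Theorem 3.5]{cosso_russoStrict} and the Kolmogorov-equation regularity in \cite{friedman75vol1}, which is precisely the degenerate, flow-smoothness/Feynman--Kac argument you spell out, with no uniform parabolicity needed), obtains the H\"older--Lipschitz estimate of item 4) from the argument behind \eqref{Value_Lipschitz}, and the convergence of item 5) from Lemma \ref{L:CylindrApprox2} with $g_n,T,A$ replaced by $\hat v_n(s_0,\cdot),s_0,\{a_0\}$ --- exactly your scheme, including the gradient bound via the Lipschitz property of the c\`adl\`ag extension. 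Note only that, as you implicitly assume, the state process in the definition of $v_n^{s_0,a_0}$ must be the one driven by $b_n$ (and $\sigma$), which is the reading that makes item 2) and the application of Lemma \ref{L:CylindrApprox2} consistent.
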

\begin{proof}[\textbf{Proof.}]
Items 1)-2)-3) follow from the same arguments as in \cite[Theorem 3.5]{cosso_russoStrict}, which indeed goes along the same lines as in the proof of items 1)-2)-4) of Lemma \ref{L:CylindrApprox3}, relying on regularity results for linear (rather than fully nonlinear as in Lemma \ref{L:CylindrApprox3}) parabolic equations as in particular \cite[Theorem 6.1, Chapter 5]{friedman75vol1}. Moreover, item 5) follows from \eqref{Value_Lipschitz}. Finally, item 4) follows from Lemma \ref{L:CylindrApprox2} with $g_n,T,A$ replaced respectively by $\hat v_n(s_0,\cdot),s_0,\{a_0\}$.
\end{proof}

\end{appendices}

\small

\bibliographystyle{plain}
\bibliography{references}

\end{document}